\newcommand{\R}[0]{\mathbb R}
\renewcommand{\P}[0]{\mathbb P}
\newcommand{\Q}[0]{\mathbb Q}
\newcommand{\N}[0]{\mathcal{N}}
\newcommand{\W}[0]{\mathcal{W}_2}
\newcommand{\w}[0]{\mathcal{W}_1}
\renewcommand{\S}{\mathcal{S}}
\renewcommand{\H}{\mathcal{H}}
\renewcommand{\N}{\mathbb{N}}
\newcommand{\C}[0]{\mathscr{C}}
\newcommand{\F}[0]{\mathcal{F}}
\renewcommand{\L}[1]{\text{L}^{#1}}
\newcommand{\1}[1]{\mathbf{1}_{#1}}
\renewcommand{\~}[1]{\Tilde{#1}}
\newcommand{\E}[0]{\mathbb E}
\renewcommand{\sup}[1]{\underset{#1}{\text{sup }}}
\renewcommand{\inf}[1]{\underset{#1}{\text{inf }}}
\renewcommand{\lim}[1]{\underset{#1}{\text{lim }}}
\newcommand{\Pcal}[0]{\mathcal{P}}
\newcommand{\loi}[1]{{\seg*{#1}}}
\renewcommand{\v}[1]{\Check{#1}}
\renewcommand{\exp}[1]{\text{exp}\left( #1 \right)}
\newcommand{\Tr}[1]{\text{Tr}\left( #1 \right)}
\newcommand{\A}[0]{\mathscr{A}}
\DeclarePairedDelimiterX\braket[2]{\langle}{\rangle}{#1 , #2}
\DeclarePairedDelimiter\set{\{}{\}}
\DeclarePairedDelimiter\seg{[ }{ ]}
\DeclarePairedDelimiter\abs{\lvert}{\rvert}
\DeclarePairedDelimiter\norm{\lVert}{\rVert}
\renewcommand{\d}{\mathrm{d}}
\newcommand{\LL}[0]{\mathscr{L}}
\renewcommand{\b}[0]{\mathfrak{b}}
\renewcommand{\H}[0]{\mathscr{H}}
\newcommand{\EE}[0]{\mathcal{E}}
\newcommand{\UU}[0]{\mathcal{U}}
\newcommand{\WW}[0]{\mathcal{W}}
\newcommand{\X}[0]{\mathcal{X}}
\newtheorem{thm}{Theorem}[section]
\newtheorem*{thm*}{Theorem}
\newaliascnt{Proposition}{thm}
\newtheorem*{prop*}{Proposition}
\newtheorem{prop}[Proposition]{Proposition}
\newaliascnt{Lemma}{thm}
\newtheorem*{lem*}{Lemma}
\newtheorem{lem}[Lemma]{Lemma}
\newaliascnt{cor}{thm}
\newtheorem{corollaire}[cor]{Corollary}
\newtheorem*{corollaire*}{Corollary}
\newaliascnt{def}{thm}
\theoremstyle{definition}
\newtheorem*{definition*}{Definition}
\theoremstyle{definition}   
\newtheorem{hyp}{Assumption}
\newtheorem*{hyp*}{Assumption}
\newaliascnt{Remark}{thm}
\newtheorem{rmq}[Remark]{Remark}
\newtheorem*{rmq*}{Remark}
\newaliascnt{Example}{thm}
\theoremstyle{remark}
\newtheorem{ex}[Example]{Example}
\newtheorem*{ex*}{Example}
\title{Ergodic distribution dependent BSDE and application to long-time behavior of finite horizon distribution dependent BSDE}
\author[1]{Kaplan Desbouis}
\author[1]{Adrien Richou}
\affil[1]{Univ. Bordeaux, CNRS, INRIA, Bordeaux INP, IMB, UMR
5251, F-33400 Talence, France}
\begin{document}

\maketitle

\begin{abstract}
    After proving existence and uniqueness of  ergodic distribution dependent Backward Stochastic Differential Equations (BSDEs) under strong and weak dissipativity regimes for the underlying McKean–Vlasov SDE, we are able to leverage this new framework to investigate the long-time behavior of distribution-dependent BSDEs on a finite-time horizon. Finally, we apply our results to solve an ergodic McKean-Vlasov stochastic control problem and study the long-time behavior of the value function of a finite-horizon McKean-Vlasov stochastic control problem.   
    
\end{abstract}

{\small\textbf{Keywords:} McKean-Vlasov SDE, Ergodic BSDE, Ergodic optimal control problem\\
\textbf{AMS 2010 subject classification:} 60H10, 60H30, 93E20.\\
\textbf{Acknowledgment}: The authors acknowledge
funding from the ANR project ReLISCoP (ANR-21-CE40-0001).}

\section{Introduction}\label{Intro}

We study in this paper the following distribution dependent ergodic backward stochastic differential equation (EBSDE for short) in infinite horizon
\begin{equation}\label{Intro: EBSDE}
    \begin{array}{lr}\displaystyle
      Y^{\loi\theta}_t=Y^{\loi\theta}_T +\int^T_t \left( f(X^{\loi\theta}_s,\LL({X^{\loi\theta}_s}),Z^{\loi\theta}_s) -\lambda \right) \d s -\int^T_t Z^{\loi\theta}_s \d W_s,   &  \forall\, 0\leq t\leq T<\infty,
    \end{array}
\end{equation}
where $(Y^{\loi\theta},Z^{\loi\theta},\lambda)$ is the unknown with,
\begin{itemize}
    \item $Y,Z$ are progressively measurable processes, respectively, $\R$-valued and  $\R^{1\times d}$-valued,
    \item $\lambda\in\R$
\end{itemize}
and the given data are 
\begin{itemize}
    \item $W$ an $\R^d$-valued Brownian motion defined on ($\Omega, \F,\P$) a filtered probability space,
    \item  $\theta$ is an $\L{2}(\Omega)$ random variable independent with $W$,
    \item $X^{\loi\theta}$ is an $\R^d$-valued process starting from $\theta$ and solving for all $t\geq 0$ the following McKean-Vlasov's type SDE (MV-SDE for short)
    \begin{equation}\label{Intro: EDS MKV}
        \d X^{\loi\theta}_t=b(X^{\loi\theta}_t,\loi{X^{\loi\theta}_t})\d t+\sigma(X^{\loi\theta}_t,\loi{X^{\loi\theta}_t})\d W_t,
    \end{equation}
    where, for a given r.v. $X$, $\loi{X}$ denotes the distribution of $X$,
    \item $f$ a deterministic measurable function. 
\end{itemize}

This type of BSDE was first introduced in the non-distribution dependent framework by Fuhrman, Hu and Tessitore in \cite{Furhman-Tessitore-Hu_EBSDE_Banach} in order to solve some ergodic stochastic control problem. 
In their paper, they assumed that $\sigma$ is constant and $b$ has a strong dissipativity assumption, that is to say,
\begin{equation*}\begin{array}{lc}
    \exists ~ \eta>0 ,~\forall ~ x,~x'\in\R^d &  \braket*{x-x'}{b(x)-b(x')}\leq -\eta\abs*{x-x'}^2,
\end{array}
\end{equation*}
which allows to get an exponential confluence of trajectories.
The strong dissipative assumption was later dropped by Debussche, Hu and Tessitore in \cite{DEBUSSCHE2011407} for a weakly dissipative one: In other words, $b$ can be written as a sum of a bounded function and a strongly dissipative one. Equivalently, it can be seen as being dissipative at a sufficiently large range. Since the confluence of trajectories no longer holds, this paper is heavily built around the so-called \textit{basic coupling estimate} and a gradient estimate based on the Bismut-Elworthy-Li formula for BSDE; see, e.g. \cite{Furhmann-Tessitore_BE-Formula}. Later on, Hu, Madec and Richou in \cite{Hu-Madec-Richou} make a link between EBSDE and the long time behaviour of finite horizon BSDE given, for $T>0$ fixed, by
\begin{equation}\label{BSDE classic}\begin{array}{lr}\displaystyle
    Y^{T,x}_t=g(X^{x}_T) +\int^T_t f(X^x_s,Z^{T,x}_s) \d s -\int^T_t Z^{T,x}_s \d W_s,
 &  \forall t\in\seg*{0,T},
\end{array}
\end{equation}
where $X$ is the solution to an infinite dimension SDE and $g$ is a given function. Namely, they proved that there exist some constants $L \in \R$, $C>0$ and $\eta>0$ such that
\begin{equation}
\label{intro:expo decroissance}
    |Y^{T,x}_0-\lambda T-Y^x_0 -L| \leq Ce^{-\eta T},
\end{equation}
where $(Y^x,Z^x,\lambda)$ is the solution to the following (non-distribution dependent) EBSDE
\begin{equation}\label{cvgence-intro}\begin{array}{lr}\displaystyle
    Y^{x}_t=Y^{x}_T +\int^T_t \left(f(X^x_s,Z^{x}_s)-\lambda\right) \d s -\int^T_t Z^{x}_s \d W_s,
 &  \forall 0\leq t \leq T.
\end{array}
\end{equation}
It is well known that BSDEs give a probabilistic representation of semi-linear parabolic PDEs, so \eqref{cvgence-intro} also gives the long time behavior of the solution to the following HJB equation
\begin{equation*}
    \left\{\begin{array}{lr}
        \partial_t  u^T(t,x) + \mathcal{L}u^T(t,x) +f\left((x,\nabla_x u^T(t,x)\sigma(x)\right)=0 &  t\in\seg*{0,T},x\in\R^d,\\
        u^T(T,.)=g, & 
    \end{array}\right.
\end{equation*} where $\mathcal{L}$ is the infinitesimal generator of the semi-group associated to the SDE $X$. 
A few years later, Hu and Lemmonier in \cite{Hu-Lemmonier} extended the previous results for the weak dissipative assumption in finite dimension, assuming now that $\sigma$ is no longer constant.

\par  The novelty of the present work comes from the fact that we now assume a distribution dependency for the generator $f$ and $X$ is now the solution to a MV-SDE. This difference leads to additional difficulties. In particular, due to the distribution dependency, $X$ is no longer Markovian but the couple $(X,\loi X)$ is. Taking into account this, we start by decoupling the MV-SDE \eqref{Intro: EDS MKV} as follows: for $x\in\R^d$ and $\theta$ an $\L{2}(\Omega)$ r.v., we consider
\begin{equation*}
    \d X^{x,\loi\theta}_t=b(X^{x,\loi\theta}_t,\loi{X^{\loi\theta}_t})\d t +\sigma(X^{x,\loi\theta}_t,\loi{X^\loi\theta_t})\d W_t,\quad\quad\quad X^{x,\loi\theta}_0=x,
\end{equation*}
where $X^\loi\theta$ solves \eqref{Intro: EDS MKV} with starting point $\theta$. We stress that this new equation is not a McKean-Vlasov SDE but is close to it since $X^{\theta,\loi\theta}$ gives a solution to \eqref{Intro: EDS MKV}. Consequently, instead of \eqref{Intro: EBSDE} we will study the following equation:
\begin{equation}\label{Intro: decoupled EBSDE}
Y^{x,\loi\theta}_t=Y^{x,\loi\theta}_T+\int^T_t\left(f(X^{x,\loi\theta}_s,\loi{X^\loi\theta_s},Z^{x,\loi\theta}_s)-\lambda\right)\d s -\int^T_t Z^{x,\loi\theta}_s\d W_s.
\end{equation}

The first aim of the paper is to establish some existence and uniqueness results for \eqref{Intro: decoupled EBSDE} under two different sets of assumptions: a strong dissipativity assumption on one side and a weak dissipativity assumption with a distribution free $\sigma$ on the other side (see (\nameref{SDE: H1}) and (\nameref{SDE: H2-2})). Then we are able to use this EBSDE in order to study the long time behavior of \eqref{BSDE classic} in our new framework, that is to say, 
\begin{equation}\label{BSDE}
\begin{array}{lr}\displaystyle
    Y^{T,x,{\loi\theta}}_T=g(X^{T,x,{\loi\theta}}_T) +\int^T_t f(X^{x,\loi\theta}_s,\loi{X^{\loi\theta}_s},Z^{T,x,{\loi\theta}}_s) \d s -\int^T_t Z^{T,x,{\loi\theta}}_s \d W_s,
 &  \forall t\in\seg*{0,T}.
\end{array}
\end{equation}
In particular, we obtain an exponential convergence of $Y_0^{T,x,\loi\theta}-\lambda T - Y_0^{x,\loi\theta}$ toward a constant as in \eqref{intro:expo decroissance}. 
In an equivalent way, this result allows to obtain the long-time behavior of the solution of a McKean-Vlasov HJB equation given by
\begin{equation*}
    \left\{\begin{array}{ll}
        \partial_t  u^T(t,x,\loi\theta) + \mathcal{L}u^T(t,x,\loi\theta) +f\left(x,\loi\theta,\nabla_x u^T(t,x,\loi\theta)\sigma(x,\loi\theta)\right)+\mathcal{I}(u^T(t,x,\loi\theta))=0, \quad x \in \R^d, t\geq0,\, \theta \in\L{2}(\Omega)&\\
        u^T(T,x,\loi\theta)=g(x,\loi\theta),\quad x \in \R^d,\, \theta \in\L{2}(\Omega) & 
    \end{array}\right.
\end{equation*}
where $$\mathcal{I}(u^T(t,x,\loi\theta)):=\int_{\R^d}\left( \partial_{\mu}u^T(t,x,\loi\theta)(y)+\partial_y\partial_\mu u^T(t,x,\loi\theta)(y)\mu(\d y)\right),\quad \mu = [\theta]$$
and $\partial_\mu\phi(\mu)$ denotes the Lions' derivative of a function $\phi$ with respect to its distribution valued variable, see e.g. \cite[Section 5.2]{Delarue-Carmona_Book} for further details.

The EBSDE \eqref{Intro: decoupled EBSDE} can also be used to solve an ergodic optimal control problem
where we optimize an ergodic reward with a controlled MV-SDE,
    \begin{equation}\label{Intro: EDS control decoupled}
        \d X^{x,\loi\theta,\bm a}_t =\left(b(X^{x,\loi\theta,\bm a}_t,\loi{X^{\loi\theta,0}_t})+\sigma(X^{x,\loi\theta,\bm a}_t,\loi{X^{\loi\theta,0}_t})Ra_t\right)\d t +\sigma(X^{x,\loi\theta,\bm a}_t,\loi{X^{\loi\theta,0}_t})\d W_t
    \end{equation}
over a set of admissible controls $\bm{a}=(a_s)_{s \geq 0}$ taking values in a bounded subset of some $\R^k$.
This stochastic control problem is also linked to the following ergodic McKean-Vlasov Hamilton-Jacobi-Bellman equation (HJB for short) given by:
\begin{equation*}
    \begin{array}{ll} 
\mathcal{L}u(x,\loi\theta)+f(x,\loi\theta,\nabla_xu(x,\loi\theta))+\E_{U\sim\loi\theta}\seg*{\partial_{\loi\theta}u(t,x,\loi\theta)(U)}+\E_{U\sim\loi\theta}\seg*{\partial_u\partial_\loi\theta u(t,x,\loi\theta)(U)}=\lambda,\quad x \in \R^d,\,  t\geq0,\, \theta\in\L{2}(\Omega),
    \end{array}
\end{equation*} 
  where $\mathcal{L}$ is the infinitesimal generator defined by 
$$\mathcal{L}\phi(x,\loi\theta) :=b(x,\loi\theta)\nabla_x\phi(x,\loi\theta) + \frac{1}{2}\Tr{\sigma\sigma^\top(x,\loi\theta)\nabla^2_x\phi(x,\loi\theta)},
$$  and $f$ is the Hamiltonian associated to the control problem. 
We stress the fact that the problem considered do not allow to control the law in the decoupled equation \eqref{Intro: EDS control decoupled}. Consequently, the studied framework is not the classical McKean-Vlasov ergodic control problem and it explains why we will talk about a \textit{partial} McKean-Vlasov ergodic control problem. If we see a McKean-Vlasov SDE as the limit of an interacting particle system, we consider a problem were we control only one particle, which explains why this control will have no impact on the distribution, whereas the classical McKean-Vlasov ergodic control problem is looking for an optimal control identically applied to all particles at the same time.  

\textbf{Comparison with other works.}
Up to our knowledge, there is no work dealing with distribution dependent ergodic BSDEs. 
However several paper investigate some questions related to close models, as mean field games (MFG for short) or McKean-Vlasov control problems, by using other tools. Let us detail here some of them.

A recent paper of Fuhrman and Rudà, see \cite{Ruda-Furhman}, obtain result on ergodic control problem on the Wasserstein space by using a control approach for strongly dissipative and controlled McKean-Vlasov's SDE. Namely, the following SDE, 
\begin{equation*}
    \d X_t=b(X_t,\loi{X_t},a_t)\d t+\sigma(X_t,\loi{X_t},a_t)\d W_t.
\end{equation*}  
Their main result is the existence and (partial) uniqueness of a viscosity solution to 
an ergodic McKean-Vlasov HJB equation on the Wasserstein space. They link it with an optimal ergodic McKean-Vlasov control problem and show that $\lambda$ (the constant that is part of the solution of the ergodic PDE) gives the first order term when we look at the long time behavior of a finite horizon McKean-Vlasov control problem, as in \autoref{LTB: LTB 1}. Nevertheless, compared to our work, they only consider a strong dissipativity condition, and ask their generator $f$ to be bounded and Lipschitz with respect to $x$ where we have instead a polynomial growth and a locally Hölder condition. Moreover, they do not have the precise exponential convergence \eqref{intro:expo decroissance}. Same kind of results, under some close assumptions, were also obtained by Bao and Tang in \cite{Bao-Tang}.

Bayraktar and Jian \cite{Bayraktar-Jian} developed ergodicity and turnpike properties, i.e. exponential convergence toward an ergodic problem, for linear-quadratic Mean Field control problems.  Apart from the linear-quadratic framework, the main difference on the MV-SDE is that theirs only depend on the law through the expectation. 
\par Cecchin, Conforti, Durmus and Eichinger in \cite{conforti} establish several turnpike properties, under different smoothness assumptions and weak dissipativity assumption, of the long time behavior of solutions of mean field PDE systems related to MFG. These results generalize some previous papers \cite{Cardaliaguet-12,Cardaliaguet-13,Cardaliaguet-19} restricted to compact domains of $\R^d$ and assuming a Lasry-Lions monotonicity assumption.

\textbf{Organization.} The paper is organized as follows. In Section \ref{SDE}, we state some useful estimates on McKean-Vlasov SDEs under two different sets of assumptions (see (\nameref{SDE: H1}) and (\nameref{SDE: H2-2})). Section \ref{EBSDE} is devoted to the proof of an existence and uniqueness result for the EBSDE \eqref{Intro: decoupled EBSDE} given by \autoref{EBSDE: Existence and uniqueness}. In Section \ref{LTB}, we study the long time behavior of the finite horizon distribution dependent BSDE \eqref{BSDE} thanks to our EBSDE solution: see \autoref{LTB: LTB 1}, \autoref{LTB: LTB 2} and \autoref{LTB: LTB 3}. In Section \ref{OCP}, we apply results of Sections \ref{EBSDE} and \ref{LTB} to a partial McKean-Vlasov ergodic control problem and the long-time behavior of a partial McKean-Vlasov control problem. The remaining of the introduction is devoted to notations.

\textbf{Notations.} Let a given filtered complete probability space $(\Omega,\mathbb{F}:=(\mathbb{F}_t)_{t\in\R_+},\P)$ which supports a $d-$Brownian motion denoted $W$. Let also, see \cite{Ruda}, $\mathcal{G}$ be a sub-$\sigma$-algebra of $\mathbb{{F}}$ such that $\mathcal{G}$ is generated by $U\sim Unif(\seg*{0,1})$ and is independent of $W$. Consequently, we define the filtration $\F =(\F_t)_{t\in\R_+}$ by $\F_t=\sigma(\mathcal{G}\cup\mathbb{F}_t)$, for all $t \geq 0$. We also set the following:
\begin{enumerate}
    \item \underline{Spaces}: For all $p\geq 1$, we set:
    \begin{enumerate}
    \item $\L{p}(\Omega,\R^d)$ the set of $\R^d$-valued random variable admitting finite moment of order $p$ endowed with the norm \begin{equation*}
        \norm*{\theta}_p^p:=\norm*{\theta}_{\L{p}(\Omega,\R^d)}^p=\E\seg*{\abs*{\theta}^p}.
    \end{equation*}
        \item $\L{2}_\Pcal(\Omega; \L{2}(\seg*{0,T}; \R^d))$ the set of predictable processes $X$ on $\seg*{0,T}$ such that 
        \begin{equation*}
            \norm*{X}^2_{\L{2}}:=\norm*{X}^2_{\L{2}_\Pcal(\seg*{0,T}; E)}=\E\seg*{\int^T_0 \abs*{X_s}^2 \d s}.
        \end{equation*}
        \item $\L{2}_\text{loc}(\Omega; \L{2}(\R_+; \R^d))$ the set of predictable processes $X\in\L{2}_\Pcal(\Omega; \L{2}(\seg*{0,T}; \R^d))$ for any $T>0$.
        \item $\S_T^2$ the set of continuous, $\F-$adapted processes $X$ on $\seg*{0,T}$ with value in $\R^2$ satisfying \begin{equation*}
        \norm*{X}_{\S_T^p}:=\E\seg*{\sup{t\in\seg*{0,T}}{\abs*{X_s}^2}}^{1/2}<\infty. 
    \end{equation*}
    \item  $\Pcal_p(\R^d)$ be the set of probabilities measures  having finite $p-$th moment over $\R^d$ endowed with the $p$-Wasserstein distance defined by: for $\nu,\mu\in\Pcal_p(\R^d)$
    \begin{equation*}
        \mathcal{W}_p(\nu,\mu)=\inf{\Pi\in\Pcal_{\nu,\mu}}\left(\int_{\R^d\times\R^d} \abs*{x-y}^p\Pi(\d x,\d y) \right)^{1/p},
    \end{equation*}
    where $\Pcal_{\nu,\mu}$ is the set of all couplings of probability distributions $\nu$ and $\mu$, i.e. the set of all probability distributions on $\R^d \times \R^d$ with marginals $\nu,\mu$.
    \item $\C^{k,m}(\R^d\times\Pcal(\R^d);\R)$ be the set of $k$ times continuously differentiable w.r.t the first variable and $m$ times differentiable w.r.t to the second one.
    \end{enumerate}
    We also denote $\nabla_xf$ the gradient of $f$ w.r.t $x$ which will be seen as an element of $\R^{1\times d}$ 
    
    \item \underline{Norms:} We naturally endow $\R^d$ with the Euclidean norm denoted $\abs*{\cdot}$ and for a $\sigma\in\R^{d\times d}$ we denote $\norm*{\sigma}$ the Frobenius norm. Namely, $\norm*{\sigma}^2:=\Tr{\sigma\sigma^\top }$ 
    \item \underline{Constants:} For $\varphi:\R^d\times\Pcal_p(\R^d)\mapsto H$ a Lipschitz function w.r.t both variables and $H$ an Hilbert space, we set, up to transformations, $K^\varphi_x$ (resp. $K^\varphi_\LL$) the Lipschitz constant for the space variable (resp. for the distribution variable). The constants $C$ will be denoted with a index $k$ to enlighten the dependency w.r.t $k$. The constants $C$ might change from one line to another. 
\end{enumerate}

\section{The forward McKean-Vlasov SDE}\label{SDE}
In this section, we establish several exponential convergence results for McKean-Vlasov SDEs under two different sets of assumptions (\nameref{SDE: H1}) and (\nameref{SDE: H2-2}). Let us consider the following McKean-Vlasov SDE:
\begin{equation}\label{SDE: MKV SDE}\tag{MV-SDE}\left\{\begin{array}{lr}
     \d X^{s,\loi\theta}_t =b(t,X^{s,\loi\theta}_t,\loi{X^{s,\loi\theta}_t})\d t +\sigma(X^{s,\loi\theta}_t,\loi{X^{s,\loi\theta}_t})\d W_t,& t\geq s,   \\
     X_s^{s,\loi\theta}=\theta\in{ \L{p}}(\Omega,\R^d) \text{ is $\mathcal{G}$-measurable },&
\end{array}\right.
\end{equation}
where $p\geq 2$, $b$ and $\sigma$ satisfy the following assumptions.
\begin{hyp}\label{SDE: Assumption existence uniqueness SDE MKV}
$b:\R_+\times\R^d\times\Pcal_{ p}\mapsto\R^d$ and $\sigma:\R^d\times\Pcal_{ p}\mapsto\R^{d\times d}$ are two measurable functions such that
    \begin{enumerate}
        \item for all $t\geq 0$ $b(t,\cdot,\cdot)$ is uniformly Lipschitz,
        \item for all $x \in \R^d$ and $\mu \in \Pcal_{ p}$, $t \mapsto b(t,x,\mu)$ is bounded,
        \item $\sigma$ is Lipschitz.
    \end{enumerate}
\end{hyp}
We emphasize that \autoref{SDE: Assumption existence uniqueness SDE MKV} is only here for the well-posedness of \eqref{SDE: MKV SDE} and this set of assumptions is supposed to be fulfilled throughout the article. If we consider a r.v. $\theta' \in {  \L{p}}(\Omega,\R^d)$ that is not $\mathcal{G}$-measurable, then we can construct a $\mathcal{G}$-measurable r.v. $\theta_\mu$ that only depends on $\mu:=\loi{\theta'}$ and such that $\loi\theta=\mu=\loi{\theta'}$ since the probability space $(\Omega,\mathcal{G},\P)$ is atomless: see e.g. \cite[Page 352]{Delarue-Carmona_Book}. In this case, we define it by a slight abuse of the notation $X^{s,\loi{\theta'}}:=X^{s,\loi{\theta_{\mu}}}$. Moreover, we can remark that if $\theta'$ is $\mathcal{F}_s$ -measurable but not $\mathcal{G}$ -measurable, then $X^{s,\loi{\theta'}}$ is not equal to the solution of \eqref{SDE: MKV SDE} starting from $\theta'$ at time $s$ but these two processes are equal in distribution.

For the latter use, we also introduce the \textit{decoupled} McKean-Vlasov SDE as 
\begin{equation}\label{SDE: Decoupled MKV SDE}\tag{Decoupled SDE}\left\{\begin{array}{lr}
      \d X^{s,x,\loi{\theta}}_t=b(t,X^{s,x,\loi{\theta}}_t,\loi{X^{s,\loi\theta}_t})\d t +\sigma(X^{s,x,\loi{\theta}}_t,\loi{X^{s,\loi\theta}_t})\d W_t, & t\geq s,  \\
      X_s^{s,x,\loi\theta}=x\in\R^d,&
   \end{array}\right.
\end{equation}where $x\in\R^d,\theta\in{  \L{p}}(\Omega,\R^d)$, for $p\geq 2$ and $X^{s,\loi\theta}$ is the solution to \eqref{SDE: MKV SDE}. Let us remark that \eqref{SDE: Decoupled MKV SDE} is \underline{not} a McKean-Vlasov SDE but we can easily return to \eqref{SDE: MKV SDE} by evaluating the function $X^{s,\cdot,[\theta]}$ in $\theta$, i.e. $X^{s, \theta,\loi\theta}$ is solution to \eqref{SDE: MKV SDE}. Finally, we set $X^\loi\theta:=X^{0,\loi\theta}$ and $X^{x,\loi\theta}:=X^{0,x,\loi\theta}$ to simplify the notation.
\par Under \autoref{SDE: Assumption existence uniqueness SDE MKV} the following result holds: See \cite[Proposition 2.1]{Ruda-Furhman}, with $\alpha=0$ in the cited reference, for the first point; while the second point is a consequence of standard results for classical SDEs. 
\begin{thm}\label{SDE: thm: Existence uniqueness MKV}
\begin{enumerate}
    \item For all $s \geq 0$ and $\theta\in{  \L{p}}(\Omega;\R^d)$, there exists a unique solution $(X^{s,\loi\theta}_t)_{t\geq s}$ to \eqref{SDE: MKV SDE} such that $(X^{s,\loi\theta}_t)_{t\in\seg*{s,T}}\in\S_T^{{ p}}$ for all $T \geq s$.
    \item For all $s \geq 0$, $\theta\in{  \L{p}}(\Omega;\R^d)$ and $x \in \mathbb{R^d}$, there exists a unique solution $(X^{s,x,\loi\theta}_t)_{t\geq s}$ to \eqref{SDE: MKV SDE} such that $(X^{s,x,\loi\theta}_t)_{t\in\seg*{s,T}}\in\S_T^{{ p}}$ for all $T \geq s$.
\end{enumerate}
\end{thm}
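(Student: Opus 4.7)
\textbf{Proof plan for \autoref{SDE: thm: Existence uniqueness MKV}.} The plan is to handle the two statements separately: the first by a standard contraction argument on the measure flow of the McKean-Vlasov SDE, the second as a direct consequence by viewing \eqref{SDE: Decoupled MKV SDE} as a classical SDE with time-dependent coefficients obtained by freezing the measure flow produced in the first step.

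For (1), I would fix $T\geq s$ and work on $[s,T]$. The natural space on which to run a fixed point is the set $C([s,T],\Pcal_p)$ of continuous flows of measures, which is complete under $\sup_{t\in[s,T]}\W_p(\cdot,\cdot)$ (here I mean the $p$-Wasserstein distance; the generic $p\geq 2$ version of $\W$ in the paper). To a candidate flow $(\mu_t)_{t\in[s,T]}$, I would associate the classical SDE
\begin{equation*}
\d Y_t=b(t,Y_t,\mu_t)\d t+\sigma(Y_t,\mu_t)\d W_t,\qquad Y_s=\theta,
\end{equation*}
whose coefficients, by \autoref{SDE: Assumption existence uniqueness SDE MKV}, are Lipschitz in the space variable, measurable and locally bounded in time (using the continuity of $t\mapsto\mu_t$ and the Lipschitzness in the measure). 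Standard SDE theory (Picard iteration, Gronwall, and Burkholder-Davis-Gundy) then gives a unique strong solution $Y\in\S_T^{p}$, and I can define $\Psi(\mu)_t:=\loi{Y_t}$. The continuity of $t\mapsto\loi{Y_t}$ in $\W_p$ follows from the moment estimate on $Y$.

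The contraction estimate is the core computation. For two flows $\mu^1,\mu^2$ with associated solutions $Y^1,Y^2$, Itô's formula on $|Y^1_t-Y^2_t|^p$, the Lipschitz property of $(b,\sigma)$, BDG, and the trivial bound $\W_p(\loi{Y^1_r},\loi{Y^2_r})^p\leq\E\seg*{|Y^1_r-Y^2_r|^p}$ yield
\begin{equation*}
\E\seg*{|Y^1_t-Y^2_t|^p}\leq C\int_s^t\Bigl(\E\seg*{|Y^1_r-Y^2_r|^p}+\W_p(\mu^1_r,\mu^2_r)^p\Bigr)\d r.
\end{equation*}
A Gronwall argument, combined with an exponential weight $e^{-\beta t}$ on the sup norm, makes $\Psi$ a strict contraction for $\beta$ large enough on the whole interval $[s,T]$, hence existence and uniqueness of a fixed point; the pathwise existence and uniqueness of $X^{s,\loi\theta}$ then follows from the associated frozen-measure SDE, and $X^{s,\loi\theta}\in\S_T^p$ by the moment estimate used above. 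The main (rather mild) obstacle here is purely technical: ensuring that linear growth estimates close properly under the $p$-th moment, which is automatic since $\theta\in\L{p}(\Omega,\R^d)$ and $\sigma(\cdot,\mu_t)$ has at most linear growth uniformly on bounded sets of $\Pcal_p$.

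For (2), once (1) is established, the flow $\nu_t:=\loi{X^{s,\loi\theta}_t}$ belongs to $C([s,T],\Pcal_p)$ for every $T\geq s$. The decoupled equation \eqref{SDE: Decoupled MKV SDE} then reads $\d X^{s,x,\loi\theta}_t=\tilde b(t,X^{s,x,\loi\theta}_t)\d t+\tilde\sigma(t,X^{s,x,\loi\theta}_t)\d W_t$ with $X^{s,x,\loi\theta}_s=x$, where $\tilde b(t,y):=b(t,y,\nu_t)$ and $\tilde\sigma(t,y):=\sigma(y,\nu_t)$ are measurable in $t$, uniformly Lipschitz in $y$ (with Lipschitz constants controlled by those of $b,\sigma$), and such that $\tilde b(t,0),\tilde\sigma(t,0)$ are locally bounded in $t$ thanks to the regularity of $\nu$. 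Classical strong existence and uniqueness for SDEs with such coefficients then gives the result, with $X^{s,x,\loi\theta}\in\S_T^{p}$ for every $T\geq s$ by the standard Gronwall-BDG moment estimate. No genuinely new difficulty arises in this step.
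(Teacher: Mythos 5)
Your proposal is correct and follows the standard Sznitman-type fixed-point argument on the measure flow (contraction of $\mu\mapsto(\loi{Y_t})_t$ in $C([s,T],\Pcal_p)$ with an exponentially weighted sup-Wasserstein norm), followed by freezing the resulting flow to treat the decoupled equation as a classical SDE; this is exactly the route taken by the references the paper cites for this statement (the paper itself gives no proof, pointing to \cite{Ruda-Furhman} for the first point and to standard SDE theory for the second). No gaps.
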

We also recall in the next proposition a flow property for McKean-Vlasov SDEs that will be useful throughout the paper: see for instance \cite[Proposition 2.1]{McMurray-Crisan} and \cite[Lemma 3.1]{PhamDPP}.
\begin{prop}
    For all $p\geq 2$, $0\leq s\leq t$, $x\in\R^d$ and $\theta\in{ \L{p}}(\Omega;\R^d)$ 
    \begin{equation*}
        \left(X^{0,x,\loi{\theta}}_{t},\loi{X^{0,\loi\theta}_{t}}\right)=\left(X^{s,X^{x,\loi\theta}_s,\loi{X^{\loi\theta}_s}}_t,\loi{X^{s,\loi{X^{\loi\theta}_s}}_t}\right), \quad\quad\quad\P-a.s.
    \end{equation*}
\end{prop}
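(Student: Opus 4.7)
The plan is to reduce both equalities to the pathwise uniqueness and the flow property of classical (non-MV) Itô SDEs with time-inhomogeneous Lipschitz coefficients. The key observation is that once the law flow $r\mapsto\mu_r:=\loi{X^{\loi\theta}_r}$ is frozen, the decoupled equation \eqref{SDE: Decoupled MKV SDE} is precisely a standard SDE with coefficients $(r,y)\mapsto b(r,y,\mu_r)$ and $(r,y)\mapsto \sigma(y,\mu_r)$, which under \autoref{SDE: Assumption existence uniqueness SDE MKV} are Lipschitz in $y$ uniformly in $r$ and satisfy the usual linear-growth bound. Strong existence, pathwise uniqueness, and the classical flow identity $\tilde X^{0,x}_t=\tilde X^{s,\tilde X^{0,x}_s}_t$ $\P$-a.s.\ are therefore available off the shelf.

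First I would handle the marginal law equality $\loi{X^{0,\loi\theta}_t}=\loi{X^{s,\loi{X^{\loi\theta}_s}}_t}$. Restricted to $[s,\infty)$, the process $X^{\loi\theta}$ solves in the classical sense the SDE driven by $b(r,\cdot,\mu_r),\sigma(\cdot,\mu_r)$ with initial condition $X^{\loi\theta}_s$ at time $s$, and by construction its time-$r$ marginal is $\mu_r$. Hence the pair $\bigl((X^{\loi\theta}_r)_{r\geq s},(\mu_r)_{r\geq s}\bigr)$ witnesses that $(\mu_r)_{r\geq s}$ is a fixed point of the McKean-Vlasov map starting at time $s$ from the initial law $\loi{X^{\loi\theta}_s}$. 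Choosing a $\mathcal{G}$-measurable representative of this law (possible since $(\Omega,\mathcal{G},\P)$ is atomless, as recalled after \autoref{SDE: Assumption existence uniqueness SDE MKV}) and invoking uniqueness in \autoref{SDE: thm: Existence uniqueness MKV} at the level of laws gives $\loi{X^{s,\loi{X^{\loi\theta}_s}}_r}=\mu_r$ for every $r\geq s$, which is the desired marginal identity at $r=t$.

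Once this law identification is in hand, the pathwise equality $X^{0,x,\loi\theta}_t=X^{s,X^{x,\loi\theta}_s,\loi{X^{\loi\theta}_s}}_t$ follows cleanly. On $[s,\infty)$, the process $X^{0,x,\loi\theta}$ solves the classical SDE with drift $b(r,\cdot,\mu_r)$ and diffusion $\sigma(\cdot,\mu_r)$ starting at the $\mathcal{F}_s$-measurable value $X^{x,\loi\theta}_s$ at time $s$. By the previous paragraph, $X^{s,X^{x,\loi\theta}_s,\loi{X^{\loi\theta}_s}}$ solves exactly the same classical SDE from the same starting data, since its frozen law flow coincides with $\mu_r$. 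Pathwise uniqueness yields indistinguishability on $[s,\infty)$ and in particular $\P$-a.s.\ equality at time $t$.

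The only delicate point is the substitution of the random initial value $X^{x,\loi\theta}_s$ in $X^{s,\cdot,\loi{X^{\loi\theta}_s}}_t$: formally, $X^{s,\cdot,\loi{X^{\loi\theta}_s}}_t$ is defined via a $\mathcal{G}$-measurable lift of $\loi{X^{\loi\theta}_s}$, so one should check joint measurability of $(y,\omega)\mapsto X^{s,y,\loi{X^{\loi\theta}_s}}_t(\omega)$ and that substitution of an $\mathcal{F}_s$-measurable variable independent of the future increments of $W$ is licit. This is a routine consequence of the continuity of the standard SDE flow in the initial condition combined with the independence structure $\F_s\perp\sigma(W_r-W_s,r\geq s)$. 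With this verification, all steps reduce to classical SDE tools, and the main conceptual step — the law identification via MV-SDE uniqueness — is the only one that uses the McKean-Vlasov nature of the equation.
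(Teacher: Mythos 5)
Your argument is correct, but it is worth noting that the paper does not actually prove this proposition: it is recalled with pointers to \cite{McMurray-Crisan} and \cite{PhamDPP}, so there is no in-text proof to compare against. Your route is the standard one underlying those references: freeze the law flow $r\mapsto\mu_r=\loi{X^{\loi\theta}_r}$ to turn \eqref{SDE: Decoupled MKV SDE} into a classical time-inhomogeneous Lipschitz SDE, identify the restarted law flow with $(\mu_r)_{r\geq s}$ by uniqueness of the McKean--Vlasov fixed point, and then conclude by pathwise uniqueness and the classical flow/substitution lemma. Two small points deserve to be made explicit. First, the uniqueness you invoke for the law identification is not literally the strong uniqueness of \autoref{SDE: thm: Existence uniqueness MKV} (which is stated for a given $\mathcal{G}$-measurable initial datum, whereas $X^{\loi\theta}_s$ is only $\F_s$-measurable): what you need is uniqueness of the marginal flow as a fixed point of the map $(\nu_r)_{r\ge s}\mapsto(\loi{\xi^{\nu}_r})_{r\ge s}$ depending only on the initial law; this does follow from the same Wasserstein contraction argument, and your phrase ``uniqueness at the level of laws'' is the right statement, but it should be said that this is a (standard) consequence of the Lipschitz assumptions rather than a verbatim citation of the theorem. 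Second, the substitution of the $\F_s$-measurable initial value $X^{x,\loi\theta}_s$ into the flow $y\mapsto X^{s,y,\loi{X^{\loi\theta}_s}}_t$ is indeed the classical freezing lemma argument using $\F_s\perp\sigma(W_r-W_s,\,r\ge s)$ and continuity of the flow in the initial condition; you correctly flag it, and once the law flow is frozen this is entirely a statement about classical SDEs. With these two clarifications your proof is complete and self-contained, which is more than the paper offers for this statement.
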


We also define, for all $t\geq 0$, the non-linear and non-homogeneous semigroup, see again \cite[Proposition 2.1]{McMurray-Crisan} and \cite[Lemma 3.1]{PhamDPP}, associated to $X^{x,\loi\theta}$, $x\in\R^d$ and $\theta\in\L{p}(\Omega;\R^d)$,
\begin{equation}\label{SDE: Semigroup X decoupled}
    \Pcal_t\seg*{\phi}(x,\loi\theta)=\E\seg*{\phi(X^{x,\loi\theta}_t,\loi{X^{\loi\theta}_t})}.
\end{equation}

\subsection{The strong dissipative framework}\label{SDE: Strong dissip}
In this subsection, we prove some exponential convergence results for \eqref{SDE: MKV SDE} and \eqref{SDE: Decoupled MKV SDE} under the following strong dissipative assumptions on $b$ and $\sigma$.
\begin{hyp*}[$\H_{SDE}1$]\label{SDE: H1}Let $p\geq 2$.
    \begin{enumerate}
        \item $b$ is $\nu$-L-dissipative. Namely, there exists $\nu>0$ such that for all $U,\,U'\in\L{2}(\Omega;\R^d)$, $t\geq 0$, 
        $$
        \E\seg*{\braket*{U-U'}{b(t,U,\loi{U})-b(t,U',\loi{U'})}}\leq -\nu\E\seg*{\abs*{U-U'}^2},
        $$
       \item $b$ is $\eta$-dissipative w.r.t $x$ and is $K^b_\LL$-Lipschitz w.r.t the distribution. Namely, there exists $\eta>0$ and $K^b_\LL\geq0$ such that for all $x,x'\in\R^d$ ,$\theta,\theta'\in\L{2}(\Omega;\R^d)$ and $t\geq 0$,
       $$
       \braket*{x-x'}{b(t,x,\loi\theta)-b(t,x',\loi{\theta})}\leq -\eta\abs*{x-x'}^2,
       $$
       $$
      \abs*{ b(t,x,\loi\theta)-b(t,x,\loi{\theta'})}\leq K^b_\LL\W(\loi{\theta},\loi{\theta'}).
       $$
        \item $\sigma$ is bounded and there exists $K^\sigma_x,K^\sigma_\LL>0$ such that for all $x,\,x'\in\R^d,\,\theta,\,\theta'\in\L{2}(\Omega;\R^d)$, $t\geq 0$, 
        $$
        \frac{1}{2}\norm*{\sigma(x,\loi\theta)-\sigma(x',\loi{\theta'})}^2\leq K^\sigma_x\abs*{x-y}^2 +K^\sigma_\LL\W(\loi\theta,\loi{\theta'})^2, 
        $$
        \item $\nu>K^\sigma_x+K^\sigma_\LL$ and $\eta \geq \nu$.
    \end{enumerate}\end{hyp*}
    \begin{rmq}
    \label{rem: assumpt HSDE1}
        \begin{enumerate}
            \item (\nameref{SDE: H1})-1. depends upon the choice of the probability space only through its atomless property. Indeed, when a probability space is atomless, we know that for any joint distribution $\pi\in\Pcal_2(\R^d\times\R^d)$ there exists $(U,U')\sim\pi$, see e.g. \cite[Page 352]{Delarue-Carmona_Book}. Additionally, the formulation 'L-dissipative' is here to echo with the L-monotonicity, see, for instance, \cite[Definition 3.31]{Delarue-Carmona_Book}, and the classical dissipativity notion.
            \item (\nameref{SDE: H1})-3. implies that $\sigma$ is $\sqrt{2K^\sigma_x}$-Lipschitz w.r.t. $x$ and $\sqrt{2K^\sigma_\LL}$-Lipschitz w.r.t. the distribution.
            \item There are some entanglements between the assumptions (\nameref{SDE: H1})-1. and (\nameref{SDE: H1})-2. Indeed, by doing same computations as in Example \ref{SDE: H1: example 1}, it is easy to see that, if $\eta >K^b_\LL$ then (\nameref{SDE: H1})-2. implies (\nameref{SDE: H1})-1. with $\nu := \eta -K^b_\LL$. Nevertheless, this upper-bound could be really rough. In order to  illustrate the difference between these two assumptions, we give two examples below.
        \end{enumerate}
        
    \end{rmq}
        \begin{ex}\label{SDE: H1: example 1}
            Let $X^{\loi\theta}$ be the solution to the following Ornstein-Ulhenbeck MV-SDE
            $$
            \d X^\loi\theta_t =b(X^\loi\theta_t,\loi{X^\loi\theta_t})\d t +\d W_t:= -\eta X^\loi\theta_t \d t + K^b_\LL \E\seg*{X^\loi\theta_t}\d t +\d W_t.
            $$
        We can compute that, for all $U,\,U'\in\L{2}(\Omega;\R^d)$, 
        \begin{align}
        \braket*{U-U'}{b(U,\loi{U})-b(U',\loi{U'})}&=-\eta\abs*{U-U'}^2 + K^b_\LL \braket*{ U-U'}{\E\seg*{U-U'}},\notag\\
        \E\seg*{\braket*{U-U'}{b(U,\loi{U})-b(U',\loi{U'})}}&=-\eta\E\seg*{\abs*{U-U'}^2} + K^b_\LL \left| \E\seg*{U-U'}\right|^2
        \leq -(\eta-K^b_\LL)\E\seg*{\abs*{U-U'}^2}
        .\label{SDE: H1: ex1: eq 2}
        \end{align}
        So, $b$ is $(\eta-K^b_\LL)$-L-dissipative as soon as $\eta >K^b_\LL$.
        Now let us consider ${X'}^\loi\theta$ the solution to the following Ornstein-Ulhenbeck MV SDE
        $$
            \d {X'_t}^\loi\theta =b({X'_t}^\loi\theta,\loi{{X'_t}^\loi\theta})\d t +\d W_t:= -\eta {X'_t}^\loi\theta \d t - K^b_\LL \E\seg*{{X'_t}^\loi\theta}\d t +\d W_t.
        $$
        Doing same computations as before leads to 
        \begin{align}
            \E\seg*{\braket*{U-U'}{b(U,\loi{U})-b(U',\loi{U'})}}&=-\eta\E\seg*{\abs*{U-U'}^2} - K^b_\LL \left|\E\seg*{U-U'}\right|^2 \leq -\eta\E\seg*{\abs{U-U'}^2},\label{SDE: H1: ex1: eq 1}
        \end{align}
        and so, $b$ is always $\eta$-L-dissipative.
        If we compare these two cases, we can see that it is not restrictive to assume that $\eta \geq \nu$ and we do not penalize the dissipativity constant $\eta$ if $K^b_\LL$ has a minus in front, whereas if we only use the Lipschitz property, we lose the fact that the distribution term could 'helps'. 
        \end{ex}
        {
        \begin{ex}
            Let now $X^{\loi\theta}$ be the solution to the following MV-SDE
            $$ \d X^\loi\theta_t= b(X^\loi\theta_t,\loi{X^\loi\theta_t})\d t +\d W_t,
            $$
            where $b$ is defined for any $(x,\mu)\in\R^d\times \Pcal_2$ as $b(x,\mu)=\int_{\R^d} h_1(x-y)\d \mu(y)+h_2(x)=\mathbb{E}[h_1(x-X)]+h_2(x)$ with $h_1: \R^d \to \R^d$, $h_2: \R^d \to \R^d$  and $X\sim \mu$. We assume that $h_1$ is $0$-dissipative (i.e. $-h_1$ is monotone), $K^b_\LL$-Lipschitz and $h_2$ is $\eta$-dissipative (with $\eta>0$). These assumptions on $h_1$ and $h_2$ imply, by some direct computations, that $b$ is $\eta$-dissipative and $K^b_\LL$-Lipschitz. Thus, if $\eta > K^b_\LL$ we can obtain that $b$ is $(\eta-K^b_\LL)$-$L$-dissipative. Nevertheless, it is also possible to show that $b$ is $\eta$-$L$-dissipative by assuming that $h_1$ is odd. Indeed we have, for $X \sim \mu$, $X' \sim \mu'$ and $(\tilde{X},\tilde{X'})$ an independent copy of $(X,X')$,
            \begin{align*}
                \E\seg*{\braket*{X-X'}{b(X,\mu)-b(X',\mu')}}&=\E\seg*{\braket*{X-X'}{h_1(X-\tilde{X})-h_1(X'-\tilde{X'})}}+\E\seg*{\braket*{X-X'}{h_2(X)-h_2(X')}}.
            \end{align*} 
            Then, the $\eta$-dissipativity of $h_2$ gives
            $$\E\seg*{\braket*{X-X'}{h_2(X)-h_2(X')}} \leq -\eta \E[|X-X'|^2].$$
            Moreover, since $h_1$ is odd and $(X,X',\tilde{X},\tilde{X'})$ has the same law as $(\tilde{X},\tilde{X'},X,X')$, we get
            \begin{align*}
                \E\seg*{\braket*{X-X'}{h_1(X-\tilde{X})-h_1(X'-\tilde{X'})}} =& \E\seg*{\braket*{\tilde{X}-\tilde{X'}}{h_1(\tilde{X}-{X})-h_1(\tilde{X'}-{X'})}} = -\E\seg*{\braket*{\tilde{X}-\tilde{X'}}{h_1(X-\tilde{X})-h_1(X'-\tilde{X'})}}
            \end{align*}
            which implies that
            \begin{align*}
                \E\seg*{\braket*{X-X'}{h_1(X-\tilde{X})-h_1(X'-\tilde{X'})}}  =& \frac12 \left( \E\seg*{\braket*{X-X'}{h_1(X-\tilde{X})-h_1(X'-\tilde{X'})}} - \E\seg*{\braket*{\tilde{X}-\tilde{X'}}{h_1(X-\tilde{X})-h_1(X'-\tilde{X'})}} \right)\\
                =& \frac12 \E\seg*{\braket*{X-\tilde{X}-(X'-\tilde{X'})}{h_1(X-\tilde{X})-h_1(X'-\tilde{X'})}} \leq \E [|X-\tilde{X}-X'+\tilde{X'}|^2] \leq 0
            \end{align*}
            by using the $0$-dissipativity of $h_1$.
        \end{ex}}
        
        Before giving the exponential convergence results, we recall some useful uniform bounds on moments of solutions to \eqref{SDE: MKV SDE} and \eqref{SDE: Decoupled MKV SDE}.

    \begin{prop}\label{SDE: H1: prop: Estim unif T} Under (\nameref{SDE: H1}), for all $p\geq 2$, there exists $C=C(p,\nu,K^\sigma_x,K^\sigma_\LL,K^b_\LL,\sigma(0,\delta_0),{ \|\sigma\|_{\infty} \1{p>2}},\sup{t}\abs*{b(t,0,\delta_0)})$ such that, for all $x \in \R^d$, $\theta \in {  \L{p}}(\Omega;\R^d)$ 
    \begin{align}\label{SDE: Estim unif T coupled}
        \sup{t\in\R_+}\E\seg*{\abs*{X^\loi\theta_t}^p} \leq& C(1+\norm*{\theta}_p^p),\\
        \sup{t\in\R_+}\E\seg*{\abs*{X^{x,\loi\theta}_t}^p}\leq& C(1+\abs*{x}^p+\norm*{\theta}^p_{ p}). \label{SDE: Estim unif T decoupled}
    \end{align}
    \end{prop}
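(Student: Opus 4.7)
The strategy is to apply Itô's formula to $\E[|X_t|^p]$ and exploit the various dissipativity assumptions to obtain a scalar ODI of the form $y'(t) \leq -c\, y(t) + C$ with $c>0$, from which Gronwall delivers a uniform-in-time bound. The argument will split naturally into the cases $p=2$ and $p>2$ for \eqref{SDE: MKV SDE}, and the decoupled bound \eqref{SDE: Estim unif T decoupled} will follow by treating the law $[X_t^{\loi\theta}]$ as a given, already-controlled datum.

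First I would treat $p=2$ for \eqref{SDE: MKV SDE}. Itô's formula on $|X_t^{\loi\theta}|^2$ and comparison with the reference pair $(0,\delta_0)$ give
$$\frac{d}{dt}\E[|X_t^{\loi\theta}|^2] = 2\E[\braket*{X_t^{\loi\theta}}{b(t,X_t^{\loi\theta},[X_t^{\loi\theta}])-b(t,0,\delta_0)}] + 2\E[\braket*{X_t^{\loi\theta}}{b(t,0,\delta_0)}] + \E[\|\sigma(X_t^{\loi\theta},[X_t^{\loi\theta}])\|^2].$$
The L-dissipativity (\nameref{SDE: H1})-1 bounds the first term by $-2\nu\,\E[|X_t^{\loi\theta}|^2]$; the second is absorbed via Young using $\sup_t|b(t,0,\delta_0)|<\infty$; and (\nameref{SDE: H1})-3 together with $W_2([X_t^{\loi\theta}],\delta_0)^2 = \E[|X_t^{\loi\theta}|^2]$ yields a diffusion contribution of the form $2(1+\epsilon)(K^\sigma_x + K^\sigma_\LL)\E[|X_t^{\loi\theta}|^2] + C_\epsilon$. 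The hypothesis (\nameref{SDE: H1})-4, $\nu > K^\sigma_x + K^\sigma_\LL$, makes the net coefficient strictly negative for $\epsilon$ small, and Gronwall closes the estimate.

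For $p>2$ I would bootstrap: the $p=2$ bound gives $\sup_t W_2([X_t^{\loi\theta}],\delta_0) \leq C(1+\|\theta\|_2)$, so the distribution argument in $b$ acts as a uniformly bounded perturbation. Applying Itô to $|X_t^{\loi\theta}|^p$ produces
$$\frac{d}{dt}\E[|X_t^{\loi\theta}|^p] = p\,\E[|X_t^{\loi\theta}|^{p-2}\braket*{X_t^{\loi\theta}}{b}] + \tfrac{p(p-1)}{2}\E[|X_t^{\loi\theta}|^{p-2}\|\sigma\|^2] + \text{controlled terms}.$$
Using the pointwise dissipativity (\nameref{SDE: H1})-2 with reference $x=0$, the drift inner product is bounded by $-\eta|X_t^{\loi\theta}|^p + C|X_t^{\loi\theta}|^{p-1}$, while (\nameref{SDE: H1})-3 ($\sigma$ bounded) controls the diffusion term by $C|X_t^{\loi\theta}|^{p-2}$. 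Two applications of Young's inequality ($a^{p-1}, a^{p-2} \leq \epsilon a^p + C_\epsilon$) then yield $\frac{d}{dt}\E[|X_t^{\loi\theta}|^p] \leq -c\,\E[|X_t^{\loi\theta}|^p] + C$ and hence \eqref{SDE: Estim unif T coupled}.

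For the decoupled equation, $[X_t^{\loi\theta}]$ is now a given flow whose moments up to order $p$ are bounded by the coupled estimate. Applying Itô to $|X_t^{x,\loi\theta}|^p$ and invoking the pointwise dissipativity (\nameref{SDE: H1})-2 in $x$, together with the boundedness of $\sigma$, the distribution argument contributes only an additive uniformly bounded term, and the same Young--Gronwall mechanism yields \eqref{SDE: Estim unif T decoupled}. The main delicacy is the bootstrap in Step 2: L-dissipativity is intrinsically an $L^2$-type condition and does not directly control higher moments, so one must first use it to convert the distribution dependence into a uniformly bounded perturbation before invoking the pointwise — but stronger — dissipativity (\nameref{SDE: H1})-2 at the $L^p$ level.
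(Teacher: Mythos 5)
Your proposal is correct and follows essentially the same route as the paper: the $p=2$ case via L-dissipativity plus the identity $\W(\loi{X_t^{\loi\theta}},\delta_0)^2=\E[|X_t^{\loi\theta}|^2]$ and the margin $\nu>K^\sigma_x+K^\sigma_\LL$, then a bootstrap for $p>2$ in which the distribution dependence is converted into a bounded perturbation via the $p=2$ bound so that the pointwise $\eta$-dissipativity and $\|\sigma\|_\infty$ close the $L^p$ estimate, and the decoupled bound by the same mechanism with $\loi{X^{\loi\theta}_t}$ treated as given data. The only cosmetic differences are that the paper works with $e^{\lambda t}|X_t|^p$ and integrates rather than writing the ODI directly, and it first invokes a crude time-dependent bound to justify that the stochastic integrals are true martingales.
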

    \begin{proof}By classical results on MV-SDEs, see for instance \cite[Proposition 2.1]{McMurray-Crisan} we know that, there exists $C_t \geq 0$ that might depend on time (in an increasing way) such that
    \begin{equation}\label{SDE: borne}
    \E\seg*{\abs*{X^\loi\theta_t}^p} +\E\seg*{\abs*{X^{x,\loi\theta}_t}^p}\leq  C_t(1+\abs*{x}^p+\norm*{\theta}^p_{ p}).
    \end{equation}
    So it remains to prove that this upper bound is uniform in time. Let us prove first \eqref{SDE: Estim unif T coupled}. {{We start by proving it for $p=2$.}}
    By applying Itô's formula to $\abs*{X^\loi\theta_t}^2$ and Young's inequality we have, for all $\varepsilon>0$,
    \begin{align*}
        \d \abs*{X^\loi\theta_t}^2 &=2\braket*{X^{\loi\theta}_t}{b(t,X^{\loi\theta}_t,\loi{X^{\loi\theta}_t})}\d t +\norm*{\sigma(X^{\loi\theta}_t,\loi{X^\loi\theta_t})}^2\d t +2\braket*{X^\loi\theta_t}{\sigma(X^\loi\theta_t,\loi{X^\loi\theta_t})\d W_t}\\
        &\leq 2\braket*{X^{\loi\theta}_t}{b(t,X^{\loi\theta}_t,\loi{X^{\loi\theta}_t})-b(t,0,\delta_0)}\d t +(1+\varepsilon)\norm*{\sigma(X^{\loi\theta}_t,\loi{X^\loi\theta_t})-\sigma(0,\delta_0)}^2\d t \\
        &\quad + 2\sup{t}\abs*{b(t,0,\delta_0)}\abs*{X^\loi\theta_t}\d t+ (1+\varepsilon^{-1})\norm*{\sigma(0,\delta_0)}^2\d t +2\braket*{X^\loi\theta_t}{\sigma(X^\loi\theta_t,\loi{X^\loi\theta_t})\d W_t}.
    \end{align*}
    Thanks to (\nameref{SDE: H1})-4, we can set $0<\varepsilon < (\nu - (K^\sigma_x+K^\sigma_\LL))(1+K^\sigma_x+K^\sigma_\LL)^{-1}$ and 
     $0<\lambda<\nu-(\varepsilon+({K^\sigma_x}+K^\sigma_\LL)(1+\varepsilon))$. Thus, by Young's inequality we get, 
    \begin{align}\label{SDE: H1: proof: exp + cst}
         \d e^{2\lambda t}\abs*{X^\loi\theta_t}^2 &\leq  \left(  2(\lambda+\varepsilon)\abs*{X^\loi\theta_t}^2+ 2\braket*{X^{\loi\theta}_t}{b(t,X^{\loi\theta}_t,\loi{X^{\loi\theta}_t})-b(t,0,\delta_0)}\d t +(1+\varepsilon)\norm*{\sigma(X^{\loi\theta}_t,\loi{X^\loi\theta_t})-\sigma(0,\delta_0)}^2  \right)e^{2\lambda t}\d t \notag\\&\quad+(\varepsilon^{-1}\sup{t}\abs*{b(t,0,\delta_0)}^2+(1+\varepsilon^{-1})\norm*{\sigma(0,\delta_0)}^2)e^{2\lambda t}\d t +2e^{2\lambda t}\braket*{X^\loi\theta_t}{\sigma(X^\loi\theta_t,\loi{X^\loi\theta_t})\d W_t}.
    \end{align}
    Due to \eqref{SDE: borne}, the last term of the previous inequality is a martingale, we finally obtain by (\nameref{SDE: H1})-1-2 and the definition of $\lambda$,
    \begin{align*}
    e^{2\lambda t}\E\abs*{X^\loi\theta_t}^2&\leq \norm*{\theta}^2_2+2(\lambda-\nu+(1+\varepsilon)K^\sigma_x+\varepsilon)\int^t_0  e^{2\lambda s}\E\abs*{X^\loi\theta_s}^2\d s  \\
    &\quad+ 2(1+\varepsilon)K^{\sigma}_\LL\int^t_0  e^{2\lambda s}\W(\loi{X^\loi\theta_s},\delta_0)^2\d s + \frac{C}{2\lambda}(e^{2\lambda t}-1)\\
    &\leq \norm*{\theta}_2^2-2(\nu-((1+\varepsilon)(K^\sigma_\LL+K^\sigma_x)+\varepsilon)-\lambda)\int^t_0  e^{2\lambda s}\E\abs*{X^\loi\theta_s}^2\d s+\frac{C}{2\lambda}e^{2\lambda t}
    \leq \norm*{\theta}^2_2 + \frac{C}{2\lambda}e^{2\lambda t},
    \end{align*}
    which gives us \eqref{SDE: Estim unif T coupled}.
    { Let us now prove it for any $p\geq 2$.  By applying Itô's formula to $\abs*{X^\loi\theta}^p$, we obtain 
    \begin{align*}
        \d \abs*{X_t}^p = p\abs*{X^\loi\theta_t}^{p-2}\braket*{X^\loi\theta_t}{b(t,X^\loi\theta_t,\loi{X^\loi\theta_t})}\d t + \frac{p(p-1)}{2}\abs*{X^\loi\theta_t}^{p-2}\norm*{\sigma(X^\loi\theta_t,\loi{X^\loi\theta_t})}^2 \d t+ \abs*{X^\loi\theta}^{p-2}\braket*{X^\loi\theta_t}{\sigma(X^\loi\theta_t,\loi{X^\loi\theta_t})\d W_t}.
    \end{align*}
    We set $\d M_t:= \abs*{X^\loi\theta_t}^{p-2}\braket*{X^\loi\theta_t}{\sigma(X^\loi\theta_t,\loi{X^\loi\theta_t})\d W_t}$ which is a martingale due to \eqref{SDE: borne}. Thus, by using (\nameref{SDE: H1}) and the estimate for $p=2$, we obtain
    \begin{align*}
        \d \abs*{X^\loi\theta_t}^p &\leq -\eta p\abs*{X^\loi\theta_t}^{p}\d t + p\abs*{X^\loi\theta_t}^{p-1}\abs*{b(t,0,\delta_0)}\d t +  p\abs*{X^\loi\theta_t}^{p-1}K^b_\LL\W(\loi{X^\loi\theta_t},\delta_0)+ \frac{p(p-1)}{2}\abs*{X^\loi\theta_t}^{p-2}\norm*{\sigma}_\infty^2 \d t+ \d M_t\\
        &\leq   -\eta p\abs*{X^\loi\theta_t}^{p}\d t + p\abs*{X^\loi\theta_t}^{p-1}\abs*{b(t,0,\delta_0)}\d t +  p\abs*{X^\loi\theta_t}^{p-1}K^b_\LL C(1+\norm*{\theta}_2)\d t+ \frac{p(p-1)}{2}\abs*{X^\loi\theta_t}^{p-2}\norm*{\sigma}_\infty^2 \d t+ \d M_t.
    \end{align*}
    { Now, using Young's inequality with $0<\varepsilon<p\eta/3$, gives
    \begin{align*}
        \d \abs*{X^\loi\theta_t}^p & \leq (-p\eta+3\varepsilon) \abs*{X_t^\loi\theta}^p \d t + \left(\varepsilon^{-(p-1)}(p\abs*{b(t,0,\delta_0)})^p+\varepsilon^{-(p-1)}(pK^b_\LL C(1+\norm*{\theta}_2))^p+\varepsilon^{-\frac{p-2}{2}}\left(\frac{p(p-1)}{2} \norm*{\sigma}_\infty^2 \right)^{p/2}\right) \d t +\d M_t
    \end{align*}
    If we set $0<\lambda = \eta - \frac{3\varepsilon}{p}$, then applying Itô's formula to $\d e^{p\lambda t}\abs*{X^\loi\theta}^p$ gives, by taking the expectation
    \begin{align*}
        e^{p\lambda t}\E\seg*{\abs*{X^\loi\theta_t}^p }&\leq \norm*{\theta}_p^p + \int_0^t C(1+ \|\theta\|_p^p)e^{\lambda s} \d s,
        \end{align*}
        which gives the wanted result.}
    }
    We prove \eqref{SDE: Estim unif T decoupled} by using same computations and \eqref{SDE: Estim unif T coupled}. 
    \end{proof}
    Now we give the first exponential convergence result for the solution to \eqref{SDE: MKV SDE}.
    \begin{thm}\label{SDE: H1: thm: Wp exponential contractivity}
        Assume that (\nameref{SDE: H1}) is fulfilled. Then, for all $p\geq 2$, $\theta,\theta'\in\L{2}(\Omega;\R^d)$, $t\geq 0$ and for $\Lambda:=\nu-(K^\sigma_x+K^\sigma_\LL)>0$,
        \begin{align}
            \W(\loi{X^\loi\theta_t},\loi{X^\loi{\theta'}_t})&\leq \W(\loi\theta,\loi{\theta'})e^{-\Lambda t}.
        \end{align}
    \end{thm}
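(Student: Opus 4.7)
The plan is to use a synchronous coupling along with the $L^2$-dissipativity assumption and an elementary Gronwall argument. Here is the order in which I would carry out the steps.

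\textbf{Step 1: Build an optimal synchronous coupling.} Since the probability space $(\Omega,\mathcal G,\P)$ is atomless, I can pick $\mathcal G$-measurable random variables $\tilde\theta,\tilde\theta'\in\L{2}(\Omega;\R^d)$ with $[\tilde\theta]=[\theta]$, $[\tilde\theta']=[\theta']$ and $\E[|\tilde\theta-\tilde\theta'|^2]=\W([\theta],[\theta'])^2$, as recalled after \autoref{SDE: thm: Existence uniqueness MKV}. The SDE \eqref{SDE: MKV SDE} only sees the law of the initial condition, so $X^{[\tilde\theta]}$ and $X^{[\tilde\theta']}$ are versions of $X^{[\theta]}$ and $X^{[\theta']}$. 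Crucially I solve both SDEs driven by the \emph{same} Brownian motion $W$, so $(X^{[\tilde\theta]}_t,X^{[\tilde\theta']}_t)$ is a coupling of $([X^{[\theta]}_t],[X^{[\theta']}_t])$ for every $t\ge 0$.

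\textbf{Step 2: Itô's formula on the squared difference.} Apply Itô's formula to $|X^{[\tilde\theta]}_t-X^{[\tilde\theta']}_t|^2$. The drift term is $2\langle X^{[\tilde\theta]}_t-X^{[\tilde\theta']}_t,\,b(t,X^{[\tilde\theta]}_t,[X^{[\tilde\theta]}_t])-b(t,X^{[\tilde\theta']}_t,[X^{[\tilde\theta']}_t])\rangle$, the quadratic variation term is $\|\sigma(X^{[\tilde\theta]}_t,[X^{[\tilde\theta]}_t])-\sigma(X^{[\tilde\theta']}_t,[X^{[\tilde\theta']}_t])\|^2$, and the martingale part has a moment bounded via \autoref{SDE: H1: prop: Estim unif T} (so it really is a martingale after localisation).

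\textbf{Step 3: Take expectation and combine the assumptions.} The $\nu$-$L$-dissipativity (\nameref{SDE: H1})-1 applied to the pair $(X^{[\tilde\theta]}_t,X^{[\tilde\theta']}_t)$ controls the drift term in expectation by $-2\nu\E[|X^{[\tilde\theta]}_t-X^{[\tilde\theta']}_t|^2]$, while (\nameref{SDE: H1})-3 bounds the diffusion term by $2K^\sigma_x\E[|X^{[\tilde\theta]}_t-X^{[\tilde\theta']}_t|^2]+2K^\sigma_\LL\W([X^{[\tilde\theta]}_t],[X^{[\tilde\theta']}_t])^2$. Since $(X^{[\tilde\theta]}_t,X^{[\tilde\theta']}_t)$ is \emph{a} coupling, we have the free upper bound
\begin{equation*}
\W([X^{[\tilde\theta]}_t],[X^{[\tilde\theta']}_t])^2\le \E[|X^{[\tilde\theta]}_t-X^{[\tilde\theta']}_t|^2].
\end{equation*}
Setting $u(t):=\E[|X^{[\tilde\theta]}_t-X^{[\tilde\theta']}_t|^2]$, this yields the differential inequality $u'(t)\le -2\Lambda\,u(t)$ with $\Lambda=\nu-K^\sigma_x-K^\sigma_\LL>0$.

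\textbf{Step 4: Conclude.} Gronwall gives $u(t)\le u(0)e^{-2\Lambda t}=\W([\theta],[\theta'])^2 e^{-2\Lambda t}$. Then, because $(X^{[\tilde\theta]}_t,X^{[\tilde\theta']}_t)$ is a coupling of $([X^{[\theta]}_t],[X^{[\theta']}_t])$, the definition of $\W$ yields $\W([X^{[\theta]}_t],[X^{[\theta']}_t])^2\le u(t)$, which is exactly the claimed inequality. The only genuine care is in Step 1 (the coupling must be $\mathcal G$-measurable, which is where the atomless hypothesis on $\mathcal G$ is used) and in Step 3 (the $L^2$-dissipativity is essential — the pointwise dissipativity in $x$ plus Lipschitzness in the measure would only give $\nu-K^b_\LL$ in place of $\nu$, which is strictly worse as highlighted in \autoref{SDE: H1: example 1}). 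Everything else is routine.
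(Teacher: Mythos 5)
Your proposal is correct and follows essentially the same route as the paper: a synchronous coupling started from an optimal ($\mathcal G$-measurable) coupling of the initial laws, Itô's formula on the squared difference, the $L$-dissipativity of $b$ and the Lipschitz bound on $\sigma$ combined with the elementary coupling inequality $\W(\loi{X^{\loi\theta}_t},\loi{X^{\loi{\theta'}}_t})^2\le\E[|X^{\loi{\tilde\theta}}_t-X^{\loi{\tilde\theta'}}_t|^2]$. The only cosmetic difference is that the paper applies Itô directly to $e^{2\Lambda t}|\tilde X_t|^2$ instead of invoking Gronwall on a differential inequality, which is equivalent.
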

    \begin{proof}Let $\theta,\,\theta'\in\L{2}(\Omega;\R^d)$.
        By applying Itô's formula to $e^{2\Lambda t}\abs*{\~X_t}^2$, where $\~X=X^\loi\theta-X^\loi{\theta'}$, we obtain due to (\nameref{SDE: H1})-3,
        \begin{align*}
            \d e^{2\Lambda t}\abs*{\~X_t}^2 &\leq 2(\Lambda+K^\sigma_x) e^{2\Lambda t}\abs*{\~X_t}^2\d t + 2\braket*{X^{\loi\theta}_t-X^\loi{\theta'}_t}{b(t,X^{\loi\theta}_t,\loi{X^{\loi\theta}_t})
            -b(t,X^{\loi{\theta'}}_t,\loi{X^{\loi{\theta'}}_t})}\d t\\&\quad+2K^\sigma_\LL\W(\loi{X^\loi\theta_t},\loi{X^\loi{\theta'}_t})^2e^{2\Lambda t}\d t+2e^{2\Lambda t}\braket{\~X_t}{(\sigma(X^{\loi\theta}_t,\loi{X^{\loi\theta}_t})-\sigma(X^{\loi{\theta'}}_t,\loi{X^{\loi{\theta'}}_t}))\d W_t}.
        \end{align*}
        Thanks to Proposition \ref{SDE: H1: prop: Estim unif T}, the stochastic integral in the previous estimate is a martingale. Thus, taking the expectation gives by (\nameref{SDE: H1})-1 and the definition of $\Lambda$,
        \begin{align*}
            e^{2\Lambda t}\E\seg*{\abs*{\~X_t}^2} &\leq \E\seg*{\abs*{\bar \theta-\bar \theta'}^2} -2(\nu-(\Lambda+K^\sigma_x)) \int^t_0 \E\seg*{\abs*{\~X_s}^2} e^{2\Lambda s}\d s + 2K^\sigma_\LL\int^t_0 \W(\loi{X^{\loi\theta}_s},\loi{X^\loi{\theta'}_s})^2e^{2\Lambda s}\d s\\
            &\leq \E\seg*{\abs*{\bar \theta-\bar \theta'}^2},
        \end{align*}
        where $\bar \theta$, $\bar\theta'$ are $\mathcal{G}$-measurable r.v. such that $\loi\theta=\loi{\bar \theta}$ and $\loi{\theta'}=\loi{\bar \theta'}$. Then we obtain the wanted result by taking an optimal coupling of $(\loi\theta,\loi{\theta'})$ for the $\W$-Wasserstein distance.
    \end{proof}
    \begin{rmq}
        Thanks to Remark \ref{rem: assumpt HSDE1}, this result still holds if we assume that $\eta > K^b_\LL$ and we drop the L-dissipative assumption. This case is already known in the literature, see for instance \cite[Proposition 3.1]{Ruda-Furhman} by replacing, in the cited reference, $\gamma$ by $\eta-K^\sigma_x$ and by considering our slightly different assumption on the Lipschitz constants of $\sigma.$
    \end{rmq}
    
    \begin{corollaire}
    \label{SDE: H1: cor: Wp exponential contractivity}
        { Assume that (\nameref{SDE: H1}) is fulfilled.
        Let $(\beta_t)_{t\geq 0}$ be a bounded progressively measurable process such that, by Girsanov's theorem,  $W^{\~\Q}:=W-\int^\cdot_0\beta_s \d s $ is a ${\~\Q}-$Brownian motion for ${\~\Q}$ the associated Girsanov's probability. Let us assume that  {$\nu >K^\sigma_x+ \sqrt{2K^\sigma_x}\abs*{\beta}_\infty$} and set $\varepsilon,\gamma$ such that $0<\varepsilon<\nu-K^\sigma_x - \sqrt{2K^\sigma_x}\abs*{\beta}_\infty$, $0<\gamma<\nu-K^\sigma_x - \sqrt{2K^\sigma_x}\abs*{\beta}_\infty-\varepsilon$ and $\gamma < \nu - (K^\sigma_x+K^\sigma_\LL)$.
        Then, there exists $C>0$ such that, for all $x,x'\in\R^d$, $\theta,\theta'\in\L{2}(\Omega;\R^d)$ and $t\geq 0$,
        \begin{equation}
            \E^{\~\Q}\seg*{\abs*{X^{x,\loi\theta}_t-X^{x',\loi{\theta'}}_t}^2}^{1/2}\leq C\left(\abs*{x-x'}+\W(\loi{\theta},\loi{\theta'})\right)e^{-\gamma t}.
        \end{equation}}
    \end{corollaire}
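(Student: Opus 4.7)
The proof mirrors that of \autoref{SDE: H1: thm: Wp exponential contractivity} but is carried out under $\tilde Q$ on the decoupled equation \eqref{SDE: Decoupled MKV SDE}, with the key new ingredient being the Girsanov drift $\sigma\beta$. The plan proceeds in three steps.

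\textbf{Step 1 (Rewriting the equation).} By Girsanov's theorem, under $\tilde Q$ the decoupled SDE reads
\[
dX^{x,\loi\theta}_t = \bigl[b(t,X^{x,\loi\theta}_t,\loi{X^{\loi\theta}_t}) + \sigma(X^{x,\loi\theta}_t,\loi{X^{\loi\theta}_t})\beta_t\bigr]dt + \sigma(X^{x,\loi\theta}_t,\loi{X^{\loi\theta}_t})dW^{\tilde Q}_t,
\]
and likewise for $X^{x',\loi{\theta'}}$. Crucially, the laws $\loi{X^{\loi\theta}_t}$ entering the coefficients are the $\P$-laws of the MV-SDE (frozen deterministic inputs in the decoupled formulation, unaffected by the measure change), so \autoref{SDE: H1: thm: Wp exponential contractivity} still applies verbatim and gives $\W(\loi{X^{\loi\theta}_t},\loi{X^{\loi{\theta'}}_t}) \leq \W(\loi\theta,\loi{\theta'}) e^{-\Lambda t}$ with $\Lambda := \nu - (K^\sigma_x + K^\sigma_\LL) > \gamma$.

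\textbf{Step 2 (Pointwise Itô estimate).} Set $\Delta X_t := X^{x,\loi\theta}_t - X^{x',\loi{\theta'}}_t$ and apply Itô's formula to $\abs*{\Delta X_t}^2$. I would control the $b$-difference by splitting it via the intermediate point $(X^{x',\loi{\theta'}}_t,\loi{X^{\loi\theta}_t})$ so as to isolate the $x$-dissipativity term $-\eta\abs*{\Delta X_t}^2$ from the $K^b_\LL$ Lipschitz-in-law contribution, and use (\nameref{SDE: H1})-3 for the quadratic variation. The Girsanov cross-term is handled by Cauchy--Schwarz together with an \emph{optimal} Young inequality:
\[
2\langle \Delta X_t, (\sigma_t - \sigma'_t)\beta_t\rangle \leq 2\sqrt{2K^\sigma_x}\abs*{\beta}_\infty \abs*{\Delta X_t}^2 + C_1 \W^2(\loi{X^{\loi\theta}_t},\loi{X^{\loi{\theta'}}_t}),
\]
which is exactly the source of the threshold $\sqrt{2K^\sigma_x}\abs*{\beta}_\infty$ in the hypothesis. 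An auxiliary Young inequality with parameter $\varepsilon$ absorbs the $K^b_\LL$ cross-term into an extra $\varepsilon\abs*{\Delta X_t}^2$ plus a multiple of $\W^2$. Using $\eta \geq \nu$ from (\nameref{SDE: H1})-4, these combine into
\[
d\abs*{\Delta X_t}^2 \leq -2\bigl(\nu - K^\sigma_x - \sqrt{2K^\sigma_x}\abs*{\beta}_\infty - \varepsilon/2\bigr)\abs*{\Delta X_t}^2 dt + C_\varepsilon \W^2(\loi{X^{\loi\theta}_t},\loi{X^{\loi{\theta'}}_t}) dt + d\tilde M_t,
\]
with $\tilde M$ a $\tilde Q$-local martingale; by the choice of $\gamma$ and $\varepsilon$, the drift coefficient lies strictly below $-2\gamma$.

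\textbf{Step 3 (Localization and Grönwall).} Multiplying by $e^{2\gamma t}$, I would localize $\tilde M$ by a sequence of stopping times $\tau_n \to \infty$ (necessary since the uniform moment bounds from \autoref{SDE: H1: prop: Estim unif T} are $\P$-estimates whereas the target expectation is under $\tilde Q$), take $\E^{\tilde Q}$, pass to the limit with Fatou, and invoke Step~1 to control the $\W$-source. This yields
\[
e^{2\gamma t}\E^{\tilde Q}\seg*{\abs*{\Delta X_t}^2} \leq \abs*{x-x'}^2 + C_\varepsilon \W^2(\loi\theta,\loi{\theta'}) \int_0^t e^{2(\gamma - \Lambda)s}\, ds,
\]
and since $\gamma < \Lambda$ the last integral is uniformly bounded in $t$. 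Taking square roots and using subadditivity of $\sqrt{\cdot}$ delivers the announced bound. The main obstacle is the tight budget: the Girsanov drift erodes the dissipativity by $\sqrt{2K^\sigma_x}\abs*{\beta}_\infty$ via the optimal Young constant, while the Wasserstein source term forces $\gamma < \Lambda$ so that the Grönwall integral converges. Selecting $\varepsilon$ between these two competing constraints is the only delicate bookkeeping step.
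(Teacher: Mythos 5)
Your proposal is correct and follows essentially the same route as the paper: rewrite the decoupled SDE under $\~\Q$ with the extra drift $\sigma\beta$, apply It\^o to $e^{2\gamma t}\abs*{\~X_t}^2$, split the $b$-difference to isolate the $\eta$-dissipativity from the $K^b_\LL$ and $\sqrt{2K^\sigma_\LL}\abs*{\beta}_\infty$ cross-terms via Young's inequality, use $\eta\geq\nu$, and control the resulting $\W^2$ source by \autoref{SDE: H1: thm: Wp exponential contractivity} together with $\gamma<\Lambda$. The only cosmetic difference is that you justify taking the $\~\Q$-expectation of the stochastic integral by localization and Fatou, whereas the paper invokes the uniform $\~\Q$-moment bounds of \autoref{SDE: H1: prop: Estim unif T Q}; both are valid.
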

    \begin{proof}
       We write the equation satisfied by $X^{x,\loi\theta}$ under $\~\Q$ as 
       $$
       \d X^{x,\loi\theta}_t =b(t,X^{x,\loi\theta}_t,\loi{X^\theta_t})\d t + \sigma(X^{x,\loi\theta}_t,\loi{X^\loi\theta_t})\beta_t \d t+ \sigma(X^{x,\loi\theta}_t,\loi{X^\loi\theta_t})\d W^{\~\Q}_t.
       $$
       Then, by applying Itô's formula to $e^{2\gamma  t}\abs*{\~X_t}^2$, with $\~X=X^{x,\loi\theta}-X^{x',\loi{\theta'}}$, we have by using (\nameref{SDE: H1})-2, Young's inequality, the fact that $\nu\leq \eta$ and \autoref{SDE: H1: thm: Wp exponential contractivity},    
       \begin{align*}
            \d\left( e^{2\gamma t} \abs*{\~X_t}^2 \right) &\leq 2\braket*{X^{x,\loi\theta}_t-X^{x',\loi{\theta'}}_t}{b(t,X^{x,\loi\theta}_t,\loi{X^{\loi\theta}_t})-b(t,X^{x',\loi{\theta'}}_t,\loi{X^{\loi{\theta'}}_t})}e^{2\gamma t}\d t+ 2(K^\sigma_x+\sqrt{2K^\sigma_x}\abs*{\beta}_\infty+\gamma)e^{2\gamma t}\abs*{\~X_t}^2\d t \\&\quad
           + 2\abs*{\beta}_\infty\sqrt{2K^\sigma_\LL}e^{2\gamma t}\abs*{\~X_t}\W(\loi{X^\loi\theta_t},\loi{X^\loi{\theta'}_t}) \d t + 2K^\sigma_\LL e^{2\gamma t} \W(\loi{X^\loi\theta_t},\loi{X^\loi{\theta'}_t})^2\d t \\&\quad+2e^{2\gamma t}\braket{\~X_t}{(\sigma(X^{x,\loi\theta}_t,\loi{X^{x,\loi\theta}_t})-\sigma(X^{x',\loi{\theta'}}_t,\loi{X^{x',\loi{\theta'}}_t}))\d W^{\~\Q}_t}\\
           &\leq  -2\left(\eta - \left(K^\sigma_x+\sqrt{2K^\sigma_x}\abs*{\beta}_\infty+\gamma\right)\right)e^{2\gamma t}\abs*{\~X_t}^2\d t + 2(\abs*{\beta}_\infty\sqrt{2K^\sigma_\LL}+K^b_\LL)e^{2\gamma t}\abs*{\~X_t}\W(\loi{X^\loi\theta_t},\loi{X^\loi{\theta'}_t}) \d t \\&\quad+ 2K^\sigma_\LL e^{2\gamma t} \W(\loi{X^\loi\theta_t},\loi{X^\loi{\theta'}_t})^2\d t +2e^{2\gamma t}\braket{\~X_t}{(\sigma(X^{x,\loi\theta}_t,\loi{X^{x,\loi\theta}_t})-\sigma(X^{x',\loi{\theta'}}_t,\loi{X^{x',\loi{\theta'}}_t}))\d W^{\~\Q}_t}\\
           &\leq -2\left(\eta -\left(K^\sigma_x+\sqrt{2K^\sigma_x}\abs*{\beta}_\infty+\gamma+\varepsilon\right)\right)e^{2\gamma t}\abs*{\~X_t}^2\d t + \left((\abs*{\beta}_\infty\sqrt{2K^\sigma_\LL}+ K^b_\LL)^2(2\varepsilon)^{-1} + 2K^\sigma_\LL\right)\W(\loi{X^\loi\theta_t},\loi{X^\loi{\theta'}_t})^2e^{2\gamma t}\d t   \\&\quad+2e^{2\gamma t}\braket{\~X_t}{(\sigma(X^{x,\loi\theta}_t,\loi{X^{x,\loi\theta}_t})-\sigma(X^{x',\loi{\theta'}}_t,\loi{X^{x',\loi{\theta'}}_t}))\d W^{\~\Q}_t}\\
           &\leq -2\left(\nu -\left(K^\sigma_x +\sqrt{2K^\sigma_x}\abs*{\beta}_\infty+\gamma+\varepsilon \right)\right)e^{2\gamma t}\abs*{\~X_t}^2\d t + C \W(\loi{X^\loi\theta_t},\loi{X^\loi{\theta'}_t})^2e^{2\gamma t}\d t
           \\&\quad+2e^{2\gamma t}\braket{\~X_t}{(\sigma(X^{x,\loi\theta}_t,\loi{X^{x,\loi\theta}_t})-\sigma(X^{x',\loi{\theta'}}_t,\loi{X^{x',\loi{\theta'}}_t}))\d W^{\~\Q}_t}\\
           &\leq C\W(\loi{\theta},\loi{\theta'})^2 e^{2(\gamma-\Lambda) t}\d t + 2e^{2\gamma t}\braket{\~X_t}{(\sigma(X^{x,\loi\theta}_t,\loi{X^{x,\loi\theta}_t})-\sigma(X^{x',\loi{\theta'}}_t,\loi{X^{x',\loi{\theta'}}_t}))\d W^{\~\Q}_t}.
        \end{align*}
        Using \autoref{SDE: H1: prop: Estim unif T Q}, the stochastic integral is a martingale. Hence, by taking the expectation w.r.t ${\~\Q}$ and recalling that $\gamma < \Lambda$, we have
        \begin{align*}
            e^{2\gamma t}\E^{\~\Q}\seg*{\abs*{\~X_t}^2}&\leq \abs*{x-x'}^2 + C\W(\loi\theta,\loi\theta')^2,
        \end{align*}
        which gives the desired result.    \end{proof}
        The next result extend \autoref{SDE: H1: prop: Estim unif T} under the new probability $\~\Q$.

    \begin{prop}\label{SDE: H1: prop: Estim unif T Q}
       { Assume that (\nameref{SDE: H1}) is fulfilled. Let $\beta$ and $\~\Q$ be defined as in \autoref{SDE: H1: cor: Wp exponential contractivity} and let us assume that  $\nu >K^\sigma_x + \sqrt{2K^\sigma_x}\abs*{\beta}_\infty$.} Then, for all $p\geq 2$, there exists $C=C(p,\nu,K^\sigma_x,K^\sigma_\LL,K^b_\LL,\sigma(0,\delta_0),{ \|\sigma\|_{\infty} \1{p>2}},\sup{t}\abs*{b(t,0,\delta_0)})$  such that for all $x\in\R^d,\,\theta\in{  \L{p}}(\Omega;\R^d)$,
     \begin{align}\label{SDE: H1: Estim unif T coupled Q}
        \sup{t\in\R_+}\E^{\~\Q}\seg*{\abs*{X^\loi\theta_t}^p} \leq& C(1+\norm*{\theta}_{{ p}}^p),\\
        \sup{t\in\R_+}\E^{\~\Q}\seg*{\abs*{X^{x,\loi\theta}_t}^p}\leq& C(1+\abs*{x}^p+\norm*{\theta}_{{ p}}^p). \label{SDE: H1: Estim unif T decoupled Q}
    \end{align}
    \end{prop}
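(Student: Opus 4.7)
The plan is to mirror the proof of \autoref{SDE: H1: prop: Estim unif T} while working under $\~\Q$. First I would rewrite the equation satisfied by $X^{\loi\theta}$ under $\~\Q$ via Girsanov's theorem:
\begin{equation*}
\d X^{\loi\theta}_t = \left(b(t,X^{\loi\theta}_t,\loi{X^{\loi\theta}_t}) + \sigma(X^{\loi\theta}_t,\loi{X^{\loi\theta}_t})\beta_t\right)\d t + \sigma(X^{\loi\theta}_t,\loi{X^{\loi\theta}_t})\d W^{\~\Q}_t.
\end{equation*}
The crucial point is that $\loi{X^{\loi\theta}_t}$ still refers to the law under $\P$, which by \eqref{SDE: Estim unif T coupled} satisfies $\W(\loi{X^{\loi\theta}_t},\delta_0) \leq C(1+\norm*{\theta}_2)$ uniformly in $t$. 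Hence the distribution-dependent coefficients act as time-uniformly bounded inputs when we work under $\~\Q$.

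For the $p=2$ case, I would apply Itô's formula to $e^{2\lambda t}\abs*{X^{\loi\theta}_t}^2$ under $\~\Q$ and follow exactly the route of the proof of \autoref{SDE: H1: prop: Estim unif T}. The only new term is the extra drift contribution $2\braket*{X^{\loi\theta}_t}{\sigma(X^{\loi\theta}_t,\loi{X^{\loi\theta}_t})\beta_t}$, which I would split by adding and subtracting $\sigma(0,\loi{X^{\loi\theta}_t})$: the $\sqrt{2K^\sigma_x}$-Lipschitz property of $\sigma$ in $x$ together with $\abs*{\beta}_\infty<\infty$ produces a contribution of order $2\sqrt{2K^\sigma_x}\abs*{\beta}_\infty \abs*{X^{\loi\theta}_t}^2$, while the remainder is absorbed by Young's inequality using boundedness of $\sigma$ and the $\P$-uniform control of $\loi{X^{\loi\theta}_t}$. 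Combined with the $L$-dissipativity of $b$, the assumption $\nu > K^\sigma_x + \sqrt{2K^\sigma_x}\abs*{\beta}_\infty$ leaves a strictly negative net coefficient in front of $\abs*{X^{\loi\theta}_t}^2$, and I can pick $\lambda>0$ small enough so that taking $\~\Q$-expectation (the stochastic integral being a true $\~\Q$-martingale by standard localization using boundedness of $\sigma$) closes the estimate.

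The extension to $p>2$ proceeds exactly as in the original proof, applying Itô's formula to $\abs*{X^{\loi\theta}_t}^p$, exploiting boundedness of $\sigma$, the $\eta$-dissipativity of $b$ in $x$, Young's inequality, and the $p=2$ bound already obtained to absorb the lower-order powers; one then introduces a weight $e^{p\lambda t}$ with a smaller $\lambda>0$ to reach the announced estimate. The decoupled inequality \eqref{SDE: H1: Estim unif T decoupled Q} follows identically, using \eqref{SDE: Estim unif T decoupled} in place of \eqref{SDE: Estim unif T coupled} to bound the distribution-dependent coefficients. The only real (and mild) obstacle is bookkeeping: one must keep in mind throughout that $\loi{X^{\loi\theta}_t}$ is inherently a $\P$-law and is therefore not a quantity one has to re-control under $\~\Q$, but rather an input already regulated by \autoref{SDE: H1: prop: Estim unif T}. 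Once this is internalized, the proof is a perturbation of the existing argument, the perturbation being the bounded extra drift $\sigma\beta$.
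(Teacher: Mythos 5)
Your proposal is correct and follows essentially the same route as the paper: the authors likewise redo the It\^o computation of \autoref{SDE: H1: prop: Estim unif T} under $\~\Q$, absorb the extra drift $\sigma\beta$ via the $\sqrt{2K^\sigma_x}$-Lipschitz property of $\sigma$ into a $\sqrt{2K^\sigma_x}\abs*{\beta}_\infty\abs*{X^{\loi\theta}_t}^2$ term (which the condition $\nu>K^\sigma_x+\sqrt{2K^\sigma_x}\abs*{\beta}_\infty$ keeps negative), and control the law-dependent terms by the $\P$-estimate \eqref{SDE: Estim unif T coupled}. Your observation that $\loi{X^{\loi\theta}_t}$ remains a $\P$-law and needs no re-control under $\~\Q$ is precisely the point the paper exploits.
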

    \begin{proof}
        This is a straight forward adaptation of the proof of \autoref{SDE: H1: prop: Estim unif T} where we replace \eqref{SDE: H1: proof: exp + cst} by the following inequality, for $\varepsilon$ small enough and { $0<\gamma<\nu-(1+\varepsilon)K^\sigma_x-\sqrt{2K^\sigma_x}\abs*{\beta}_\infty -\varepsilon$},
        {
        \begin{align*}
         \d e^{2\gamma t}\abs*{X^\loi\theta_t}^2 &\leq  \left(  2(\gamma+\varepsilon+\sqrt{2K^\sigma_x}\abs*{\beta}_\infty )\abs*{X^\loi\theta_t}^2+ 2\braket*{X^{\loi\theta}_t}{b(t,X^{\loi\theta}_t,\loi{X^{\loi\theta}_t})-b(t,0,\delta_0)}\right)e^{2\gamma t}\d t\\&\quad +(1+\varepsilon)\norm*{\sigma(X^{\loi\theta}_t,\loi{X^\loi\theta_t})-\sigma(0,\delta_0)}^2 e^{2\gamma t}\d t +(\varepsilon^{-1}\sup{t}\abs*{b(t,0,\delta_0)}^2+(1+\varepsilon^{-1})\norm*{\sigma(0,\delta_0)}^2)e^{2\gamma t}\d t \\&\quad+2e^{2\gamma t}\braket*{X^\loi\theta_t}{\sigma(X^\loi\theta_t,\loi{X^\loi\theta_t})\d W^{\~\Q}_t}.\\
          &\leq  \left(  2(\gamma+\varepsilon+\sqrt{2K^\sigma_x}\abs*{\beta}_\infty +(1+\varepsilon)K^\sigma_x)\abs*{X^\loi\theta_t}^2+ 2\braket*{X^{\loi\theta}_t}{b(t,X^{\loi\theta}_t,\loi{X^{\loi\theta}_t})-b(t,0,\delta_0)}\right)e^{2\gamma t}\d t\\&\quad +\left(2K_\LL^\sigma (1+\varepsilon) C(1+\|\theta\|_2^2)+\varepsilon^{-1}\sup{t}\abs*{b(t,0,\delta_0)}^2+(1+\varepsilon^{-1})\norm*{\sigma(0,\delta_0)}^2\right)e^{2\gamma t}\d t \\&\quad+2e^{2\gamma t}\braket*{X^\loi\theta_t}{\sigma(X^\loi\theta_t,\loi{X^\loi\theta_t})\d W^{\~\Q}_t},
    \end{align*}
    applying \eqref{SDE: Estim unif T coupled} in the last inequality.} From here, the remaining of the proof is the same as for \autoref{SDE: H1: prop: Estim unif T}.
    \end{proof}

\subsection{The weak dissipative framework}


In this subsection, we consider the equation \eqref{SDE: MKV SDE} in a simpler framework where $\sigma$ does not depend on the distribution, namely:
\begin{equation}\label{SDE: H2-2: Distribution free sigma}
    \d X_t =b(t,X_t,\loi{X_t})\d t +\sigma(X_t)\d W_t, \quad t\geq 0.   
    \end{equation}
It should be possible to extend some results of this subsection when $\sigma$ also depend on the distribution but at the cost of additional strong constraints on the Lipschitz and monotonicity constants of $b$ and $\sigma$, see \autoref{SDE: H2-2: rmq}.

The aim of this subsection is to relax the strong dissipativity assumption on $b$ into a weak one: Generally speaking, $b$ is now allowed to be dissipative only outside a ball. 
In order to obtain the same results as \autoref{SDE: H1: thm: Wp exponential contractivity} and \autoref{SDE: H1: cor: Wp exponential contractivity} in this new framework, we use a classical approach based on a reflection coupling as in \cite{conforti} and \cite{Huang-Ma}, themselves based on the seminal paper \cite{Eberle}.

\begin{hyp*}[$\H_{SDE}2$]\label{SDE: H2-2}
$\vspace{0pt}$
    \begin{enumerate}
        \item There exists $R,\,K^b_\LL,\, K^b_x,\, \eta>0$ such that for all $x,x'\in\R^d,\, \mu,\mu'\in\Pcal_{ 1}(\R^d)$ and $t\geq 0$
        \begin{align*}
        \abs{b(t,x,\mu)-b(t,x',\mu')}&\leq K^b_x\abs*{x-x'} +K^b_\LL \w(\mu,\mu'), \\
        \braket*{x-x'}{b(t,x,\mu)-b(t,x',\mu)}&\leq -\eta\abs*{x-x'}^2\1{\abs*{x-x'}{ > } R}+K^b_x\abs*{x-x'}^2\1{\abs*{x-x'}{ \leq }R}, 
        \end{align*}
        \item $\sigma$ is bounded and uniformly elliptic, i.e. there exists $\sigma_0>0$ such that for all $x\in\R^d$: $\sigma(x)\sigma(x)^\top \geq \sigma_0^2I_d$, 
        \item there exists $K^\sigma_x>0$ such that for all $x,x'\in\R^d$: $\norm*{\sigma(x)-\sigma(x')}\leq \sqrt{2K^\sigma_x}\abs*{x-x'}$,
        \item $ K^b_\LL< (\eta-K^\sigma_x)e^{-\frac{\eta+2K^b_x}{2\sigma^2_0}R^2}.$
    \end{enumerate}
\end{hyp*}

We start by giving some uniform estimates on the moments of solutions of \eqref{SDE: MKV SDE} and \eqref{SDE: Decoupled MKV SDE}.
\begin{prop}\label{SDE: H2: prop: Estim unif T}
Assume that (\nameref{SDE: H2-2}) holds.
    For all $p\geq 2$, there exists  $C=C(p,\eta,K^\sigma_x,K^b_\LL,R,\sigma(0), { \|\sigma\|_{\infty} \1{p>2}}, \sup{t}\abs*{b(t,0,\delta_0)})$  such that for all $x\in\R^d,\,\theta\in{  \L{p}}(\Omega;\R^d)$, 
     \begin{align}\label{SDE: H2: Estim unif T coupled}
        \sup{t\in\R_+}\E\seg*{\abs*{X^\loi\theta_t}^p} \leq& C(1+\norm*{\theta}^p_{ p}),\\
        \sup{t\in\R_+}\E\seg*{\abs*{X^{x,\loi\theta}_t}^p}\leq& C(1+\abs*{x}^p+\norm*{\theta}^p_{ p}). \label{SDE: H2: Estim unif T decoupled}
    \end{align}
\end{prop}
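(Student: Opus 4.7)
I will follow the structure of the proof of \autoref{SDE: H1: prop: Estim unif T}: first establish \eqref{SDE: H2: Estim unif T coupled} for $p=2$ via an It\^o--Gr\"onwall argument, then bootstrap to $p>2$, and finally deduce \eqref{SDE: H2: Estim unif T decoupled} by the same computation applied to the decoupled equation. The only genuine novelty, compared with the strongly dissipative case, is that (\nameref{SDE: H2-2})-1 only yields dissipativity outside a ball of radius $R$, so the local defect must be absorbed as a bounded perturbation; the crucial algebraic observation is
$$-\eta\abs*{x}^2\1{\abs*{x}>R}+K^b_x\abs*{x}^2\1{\abs*{x}\leq R}\leq -\eta\abs*{x}^2+(\eta+K^b_x)R^2.$$

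For $p=2$, I apply It\^o's formula to $\abs*{X^\loi\theta_t}^2$ and split $b(t,X^\loi\theta_t,\loi{X^\loi\theta_t})$ as $[b(t,X^\loi\theta_t,\loi{X^\loi\theta_t})-b(t,0,\loi{X^\loi\theta_t})] + [b(t,0,\loi{X^\loi\theta_t})-b(t,0,\delta_0)] + b(t,0,\delta_0)$. The first bracket is controlled by the algebraic inequality above, the second by the Lipschitz-in-distribution estimate, namely $K^b_\LL\w(\loi{X^\loi\theta_t},\delta_0) = K^b_\LL\E\abs*{X^\loi\theta_t}$, and the third is bounded by $\sup{t}\abs*{b(t,0,\delta_0)}$. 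The diffusion is handled via (\nameref{SDE: H2-2})-3. Combining, taking expectation, using that the stochastic integral is a martingale thanks to \eqref{SDE: borne}, and applying Young's inequality gives, for $u(t):=\E\abs*{X^\loi\theta_t}^2$ and any small $\varepsilon>0$,
$$u'(t) \leq -2(\eta-K^\sigma_x-K^b_\LL-\varepsilon)u(t) + C_\varepsilon.$$
Since (\nameref{SDE: H2-2})-4 combined with $e^{-\cdot}\leq 1$ implies $\eta>K^\sigma_x+K^b_\LL$, for $\varepsilon$ small enough this coefficient is strictly positive and the $e^{2\lambda t}$ multiplication argument of \autoref{SDE: H1: prop: Estim unif T} yields the uniform bound.

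For $p>2$, I apply It\^o's formula to $\abs*{X^\loi\theta_t}^p$, use the same decomposition together with the boundedness of $\sigma$, and plug in the uniform $p=2$ estimate to control $\w(\loi{X^\loi\theta_t},\delta_0)\leq C(1+\norm*{\theta}_2)$ uniformly in time. Young's inequality applied to $\abs*{X^\loi\theta_t}^{p-1}$ and $\abs*{X^\loi\theta_t}^{p-2}$ then absorbs all lower-order monomials into the leading term $-p\eta\abs*{X^\loi\theta_t}^p$, and Gr\"onwall's lemma concludes. Estimate \eqref{SDE: H2: Estim unif T decoupled} follows from the same computation applied to $X^{x,\loi\theta}$, the distribution input $\loi{X^\loi\theta_t}$ being now exogenous and its Wasserstein distance to $\delta_0$ already under control. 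I expect the only real difficulty to be the careful tracking of the constants coming from the weak-dissipativity algebraic trick and from the multiple applications of Young's inequality, to ensure they remain compatible with the tight constraint (\nameref{SDE: H2-2})-4; once this is done the computation transcribes almost verbatim from the strong-dissipative case.
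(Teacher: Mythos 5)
Your proposal is correct and follows essentially the same route as the paper, whose proof of this proposition is precisely the remark that the computations of \autoref{SDE: H1: prop: Estim unif T} go through because (\nameref{SDE: H2-2})-4 forces $K^b_\LL<\eta-K^\sigma_x$. The only genuinely new ingredient you identify, namely absorbing the local non-dissipativity defect via $-\eta\abs{x}^2\1{\abs{x}>R}+K^b_x\abs{x}^2\1{\abs{x}\leq R}\leq -\eta\abs{x}^2+(\eta+K^b_x)R^2$, is exactly what the paper's one-line proof implicitly relies on, and the rest of your argument (the $p=2$ Gr\"onwall step, the bootstrap to $p>2$ using the boundedness of $\sigma$, and the transfer to the decoupled equation) matches the template verbatim.
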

\begin{proof}
    The proof follows same computations as for \autoref{SDE: H1: prop: Estim unif T} using the fact that $K^b_\LL<\eta-K^\sigma_x$.
\end{proof}
We can now obtain the first exponential convergence result for \eqref{SDE: MKV SDE}.
{
\begin{thm}\label{SDE: H2-2: thm: Wp exponential contractivity}
    Assume that (\nameref{SDE: H2-2}) holds and let $p\geq 2$. Then, there exists $C= C(\eta ,\sigma_0,K^b_x,K^b_\LL,K^\sigma_x,R)>0$ and $\~{\eta}=\~\eta(\eta ,\sigma_0,K^b_x,K^b_\LL,K^\sigma_x,R)>0$  such that, for all $\theta,\theta'\in{  \L{2p-1}}(\Omega;\R^d)$ and $t \geq 0$,
\begin{align}\label{SDE: H2-2: Wp exponential estimate}
\WW_1(\loi{X^{\loi\theta}_t},\loi{X^{\loi{\theta'}}_t})&\leq C \w(\loi\theta,\loi{\theta'})e^{-\~\eta t},\\
\WW_p(\loi{X^{\loi\theta}_t},\loi{X^{\loi{\theta'}}_t})&\leq C(1+\norm*{\theta}_{2p-1}^{1-\frac{1}{2p}}+\norm*{\theta'}_{2p-1}^{1-\frac{1}{2p}}) \w(\loi\theta,\loi{\theta'})^{1/p}e^{-\~\eta t/p}.\label{SDE: H2-2: Wp exponential estimate moment theta}
\end{align}
\end{thm}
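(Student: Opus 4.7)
The plan is to adapt Eberle's reflection-coupling technique \cite{Eberle}, as used in \cite{conforti,Huang-Ma} for related McKean--Vlasov problems, to the framework \eqref{SDE: H2-2: Distribution free sigma}. Fix $\theta,\theta'\in\L{2p-1}(\Omega;\R^d)$, take an optimal $\w$-coupling of $(\loi\theta,\loi{\theta'})$ as initial data, and construct a coupling $(X^{\loi\theta}_t, X^{\loi{\theta'}}_t)$ of solutions of \eqref{SDE: H2-2: Distribution free sigma} in which $X^{\loi{\theta'}}$ is driven by the reflected Brownian motion
\[
\d W'_t := (I_d - 2 e_t e_t^\top)\,\d W_t,\qquad e_t := \frac{\tilde X_t}{\abs*{\tilde X_t}}\1{\tilde X_t\neq 0},\quad \tilde X_t := X^{\loi\theta}_t - X^{\loi{\theta'}}_t.
\]
The fact that $\sigma$ depends only on $x$ and is uniformly elliptic (see (\nameref{SDE: H2-2})-2) is crucial: it ensures that the diffusion coefficient of $\abs*{\tilde X_t}$ is bounded below by $2\sigma_0$, and is the reason why this weak-dissipativity argument demands the simplified setting \eqref{SDE: H2-2: Distribution free sigma}.

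\textbf{Eberle function and $\w$-contraction.} Following Eberle, I design a concave non-decreasing $C^1$ function $f:[0,\infty)\to[0,\infty)$ with $f(0)=0$, equivalent to the identity ($c_1 r \leq f(r) \leq r$ for some explicit $c_1>0$), and satisfying
\[
2\sigma_0^2\, f''(r) + \psi(r)\, f'(r) \leq -\kappa\, f(r),\qquad r>0,
\]
where $\psi(r) := -\eta r\,\1{r>R} + K^b_x r\,\1{r\leq R}$ captures the $x$-contribution of $b$ under (\nameref{SDE: H2-2})-1 and $\kappa>0$ depends only on $(\eta,\sigma_0,K^b_x,R)$. The construction is via a standard integrating-factor recipe and yields the key estimate $c_1^{-1} f'(0+) \sim (\eta - K^\sigma_x)^{-1}\exp\bigl((\eta+2K^b_x)R^2/(2\sigma_0^2)\bigr)$ up to harmless constants. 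Applying It\^o's formula to $f(\abs*{\tilde X_t})$ --- splitting $b(t,X^{\loi\theta}_t,\loi{X^{\loi\theta}_t})-b(t,X^{\loi{\theta'}}_t,\loi{X^{\loi{\theta'}}_t})$ by adding and subtracting $b(t,X^{\loi{\theta'}}_t,\loi{X^{\loi\theta}_t})$, so that the $x$-part is absorbed by $\psi f'$ and the law-part is controlled by $K^b_\LL f'(0+)\, \w(\loi{X^{\loi\theta}_t},\loi{X^{\loi{\theta'}}_t})$ thanks to (\nameref{SDE: H2-2})-1, bounding the Hessian contribution using (\nameref{SDE: H2-2})-2 and (\nameref{SDE: H2-2})-3, and noting that the stochastic integral is a true martingale by \autoref{SDE: H2: prop: Estim unif T} and boundedness of $f'$ --- I take expectations to obtain
\[
\frac{\d}{\d t}\E\seg*{f(\abs*{\tilde X_t})} \leq -\kappa\, \E\seg*{f(\abs*{\tilde X_t})} + K^b_\LL f'(0+)\, \w(\loi{X^{\loi\theta}_t},\loi{X^{\loi{\theta'}}_t}).
\]
Using $\w(\loi{X^{\loi\theta}_t},\loi{X^{\loi{\theta'}}_t}) \leq \E\abs*{\tilde X_t} \leq c_1^{-1}\E\seg*{f(\abs*{\tilde X_t})}$, Gronwall's lemma gives exponential decay at rate $\tilde\eta_1 := \kappa - c_1^{-1} f'(0+) K^b_\LL$, which is strictly positive \emph{precisely} by (\nameref{SDE: H2-2})-4. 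Combined with $c_1 r\leq f(r)\leq r$ and the optimality of the initial coupling, this yields \eqref{SDE: H2-2: Wp exponential estimate} with rate $\tilde\eta_1$.

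\textbf{Upgrade to $\WW_p$ and main obstacle.} For the $\WW_p$ estimate, I combine the $\w$-bound with the uniform moment estimate from \autoref{SDE: H2: prop: Estim unif T}. Taking an optimal $\w$-coupling $\pi_t$ of $(\loi{X^{\loi\theta}_t},\loi{X^{\loi{\theta'}}_t})$ and using Cauchy--Schwarz on the decomposition $\abs*{x-y}^p = \abs*{x-y}^{1/2}\cdot \abs*{x-y}^{(2p-1)/2}$,
\[
\WW_p(\loi{X^{\loi\theta}_t},\loi{X^{\loi{\theta'}}_t})^p \leq \w(\loi{X^{\loi\theta}_t},\loi{X^{\loi{\theta'}}_t})^{1/2}\left(\int \abs*{x-y}^{2p-1}\,\d\pi_t(x,y)\right)^{1/2},
\]
the second factor being bounded uniformly in $t$ by $C(1+\norm*{\theta}_{2p-1}^{(2p-1)/2}+\norm*{\theta'}_{2p-1}^{(2p-1)/2})$ via \autoref{SDE: H2: prop: Estim unif T}. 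Injecting the previous $\w$-estimate, taking the $p$-th root, using sub-additivity of $x\mapsto x^{1/p}$, and finally choosing $\tilde\eta$ small enough that both rates are compatible (the first bound only gets weakened by this choice) yields \eqref{SDE: H2-2: Wp exponential estimate moment theta}. The main technical difficulty lies in the middle step: the design of the Eberle function $f$ and the sharp bookkeeping of the constants $c_1$ and $f'(0+)$, so that the threshold $\kappa > c_1^{-1} f'(0+) K^b_\LL$ reduces \emph{exactly} to the smallness hypothesis (\nameref{SDE: H2-2})-4. The factor $\exp(-(\eta+2K^b_x)R^2/(2\sigma_0^2))$ appearing there is no coincidence: it quantifies precisely the loss of the ratio $f(r)/r$ relative to $f'(0+)$ incurred by $f$ when traversing the non-dissipative zone $\abs*{\tilde X_t}\leq R$, matched against the strength $K^b_\LL$ of the McKean--Vlasov perturbation.
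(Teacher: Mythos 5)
Your proposal is correct and follows essentially the same route as the paper: the paper simply packages your reflection-coupling/Eberle-function computation into the general Appendix result (\autoref{Appendix: thm general}), applied with the frozen-law drifts $\b(t,\cdot)=b(t,\cdot,\loi{X^{\loi\theta}_t})$ and $\b'(t,\cdot)=b(t,\cdot,\loi{X^{\loi{\theta'}}_t})$, the perturbation $\EE_t\leq K^b_\LL\,\w(\loi{X^{\loi\theta}_t},\loi{X^{\loi{\theta'}}_t})$ being absorbed via the constant $\bm{c}=K^b_\LL$ exactly through (\nameref{SDE: H2-2})-4, which is your threshold $\kappa>c_1^{-1}f'(0+)K^b_\LL$. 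The upgrade to $\WW_p$ by the Cauchy--Schwarz splitting $\abs*{x-y}^p=\abs*{x-y}^{1/2}\abs*{x-y}^{p-1/2}$ combined with the uniform moment bounds of \autoref{SDE: H2: prop: Estim unif T} is also the paper's argument.
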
 
\begin{proof}
 Firstly, we prove \eqref{SDE: H2-2: Wp exponential estimate} by using \autoref{Appendix: thm general}. By setting $\b(t,\cdot)=b(t,\cdot,\loi{X^\loi\theta})$, $\b'(t,\cdot)=b(t,\cdot,\loi{X^\loi{\theta'}})$ and $(\bm X_0,\bm X'_0)=(\theta,\theta')$, we have that ($\bm X,\bm X'$) is a coupling of $(X^{\loi\theta},X^\loi{\theta})$. Moreover, we have $M_\b$=$K^b_xR$, $\EE_t\leq K^b_\LL\E\seg*{\abs*{\X_t-\X'_t}}$ and we can take $\bm{c}=K^b_\LL$ since \eqref{Appendix: constraint} is satisfied due to (\nameref{SDE: H2-2})-4. Hence, by \autoref{Appendix: thm general}, \eqref{SDE: H2-2: Wp exponential estimate} holds.
 In order to obtain the results for $p>1$, we use the following suitable Cauchy-Schwarz's inequality. Let $(X,X')$ be an optimal coupling of $(X^\loi\theta,X^\loi{\theta'})$ for the $\w$-distance. By using, \autoref{SDE: H2: prop: Estim unif T} with $2p-1$, we obtain
\begin{align*}
    \WW_p(\loi{X^\loi\theta_t},\loi{X^\loi{\theta'}_t})=\WW_p(\loi{X_t},\loi{X'_t})&\leq \E\seg*{\abs*{X_t-X'_t}^p}^{1/p}= \E\seg*{\abs*{X_t-X'_t}^{p-1/2}\abs*{X_t-X'_t}^{1/2}}^{1/p}\\ 
    &\leq \E\seg*{\abs*{X_t-X'_t}^{2p-1}}^{1/2p}\E\seg*{\abs*{X_t-X'_t}}^{1/p}\\
    &\leq C\left(\E\seg*{\abs*{X_t}^{2p-1}}^{1/2p}+\E\seg*{\abs*{X'_t}^{2p-1}}^{1/2p}\right)\E\seg*{\abs*{X_t-X'_t}}^{1/p}\\
    &\leq C(1+\norm*{\theta}_{2p-1}^{1-\frac{1}{2p}}+\norm*{\theta'}_{2p-1}^{1-\frac{1}{2p}})\w(\loi{X^\loi\theta_t},\loi{X^\loi{\theta'}_t})^{1/p}.
\end{align*}
The proof concludes by using \eqref{SDE: H2-2: Wp exponential estimate}.
 \end{proof}
}
The next theorem concerns some exponential convergence results for \eqref{SDE: Decoupled MKV SDE}.

{\begin{thm}\label{SDE: H2-2: thm: Wp exponential estimate decoupled}
    Assume that (\nameref{SDE: H2-2}) holds and let $p \geq 2$.
    Then, there exists $C= C(\eta ,\sigma_0,K^\sigma_x,K^b_\LL,K^b_x,R)>0$ and $\hat{\eta}=\hat\eta(\eta ,\sigma_0,K^b_x,K^b_\LL,K^\sigma_x,R)>0$  such that, for all $x,x'\in\R^d$, $\theta,\theta'\in{  \L{2p-1}}(\Omega;\R^d)$ and $t \geq 0$,
\begin{align}\label{SDE: H2-2: Wp exponential estimate decoupled}
\w(\loi{X^{x,\loi{\theta}}_t},\loi{X^{x',\loi{\theta'}}_t})&\leq C \left(\abs*{x-x'}+\w(\loi\theta,\loi{\theta'})\right)e^{-\hat\eta t}\\
\WW_p(\loi{X^{x,\loi\theta}_t},\loi{X^{x',\loi{\theta'}}_t})&\leq C(1+\abs*{x}^{1-\frac{1}{2p}}+\abs*{x'}^{1-\frac{1}{2p}}+\norm*{\theta}_{2p-1}^{1-\frac{1}{2p}}+\norm*{\theta'}_{2p-1}^{1-\frac{1}{2p}})\left( \abs*{x-x'}^{1/p}+\w(\loi\theta,\loi{\theta'})^{1/p}\right)e^{-\hat\eta t/p},\label{SDE: H2-2: Wp exponential estimate decoupled moment theta}
\end{align}
\end{thm}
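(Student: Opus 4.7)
The plan is to mirror the proof of \autoref{SDE: H2-2: thm: Wp exponential contractivity} by applying the general reflection-coupling result \autoref{Appendix: thm general} to the decoupled equations. Specifically, I set $\b(t,\cdot) := b(t,\cdot,\loi{X^{\loi\theta}_t})$ and $\b'(t,\cdot):=b(t,\cdot,\loi{X^{\loi{\theta'}}_t})$, and take the deterministic initial conditions $(\bm X_0, \bm X'_0) = (x,x')$, so that $(\bm X, \bm X')$ is a coupling of $(X^{x,\loi\theta}, X^{x',\loi{\theta'}})$. Thanks to (\nameref{SDE: H2-2})-1, $\b$ retains the one-sided dissipativity constant $M_{\b} = K^b_x R$ required by the appendix. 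The essential structural simplification compared with the coupled proof is that the drift mismatch
\begin{equation*}
\EE_t \leq K^b_\LL\,\w(\loi{X^{\loi\theta}_t}, \loi{X^{\loi{\theta'}}_t})
\end{equation*}
is now an \emph{external} input rather than a function of the coupling under construction, so the smallness constraint \eqref{Appendix: constraint} is trivially satisfied with $\bm c = 0$. Moreover, \autoref{SDE: H2-2: thm: Wp exponential contractivity} directly provides $\EE_t \leq C K^b_\LL\, \w(\loi\theta,\loi{\theta'})\, e^{-\~\eta t}$.

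Plugging this into the Grönwall-type estimate output by \autoref{Appendix: thm general} yields a bound of the form
\begin{equation*}
\E\seg*{\abs*{\bm X_t - \bm X'_t}} \leq C\abs*{x-x'}\, e^{-\~\eta_0 t} + C K^b_\LL\, \w(\loi\theta,\loi{\theta'}) \int_0^t e^{-\~\eta_0(t-s)}\, e^{-\~\eta s}\,\d s,
\end{equation*}
where $\~\eta_0 > 0$ is the intrinsic contraction rate produced by the appendix. Bounding the convolution of two exponentials by $C e^{-\hat\eta t}$ for any $0 < \hat\eta < \min(\~\eta_0, \~\eta)$ and taking the infimum over couplings of $(\loi\theta, \loi{\theta'})$ then produces \eqref{SDE: H2-2: Wp exponential estimate decoupled}.

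For the $\WW_p$ bound with $p \geq 2$, I follow exactly the interpolation trick from the proof of \autoref{SDE: H2-2: thm: Wp exponential contractivity}. Choosing an optimal $\w$-coupling $(X,X')$ of $(X^{x,\loi\theta}_t, X^{x',\loi{\theta'}}_t)$ and writing $\abs*{X-X'}^p = \abs*{X-X'}^{p-1/2}\cdot \abs*{X-X'}^{1/2}$, a Cauchy--Schwarz step bounds $\WW_p(\loi{X^{x,\loi\theta}_t}, \loi{X^{x',\loi{\theta'}}_t})$ by the product of a fractional $(2p-1)$-moment (uniformly controlled in $t$ by \autoref{SDE: H2: prop: Estim unif T decoupled}, which yields the prefactor $C(1 + \abs*{x}^{1-1/(2p)} + \abs*{x'}^{1-1/(2p)} + \norm*{\theta}_{2p-1}^{1-1/(2p)} + \norm*{\theta'}_{2p-1}^{1-1/(2p)})$) and a fractional power of the $\w$-distance already bounded by the previous step. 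Subadditivity $(a+b)^{1/p} \leq a^{1/p}+b^{1/p}$ then delivers \eqref{SDE: H2-2: Wp exponential estimate decoupled moment theta}.

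\textbf{Main obstacle.} The only delicate point is cleanly turning the convolution of two exponentials into a single exponential decay at an explicit rate $\hat\eta$, while preserving the linearity of the bound in $\abs*{x-x'}$ and in $\w(\loi\theta,\loi{\theta'})$; this linearity is essential for the subsequent interpolation to $\WW_p$. All remaining ingredients are direct adaptations of the coupled-case argument, so no new structural assumption beyond (\nameref{SDE: H2-2}) should be needed.
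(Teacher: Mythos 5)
Your proposal is correct and follows essentially the same route as the paper: an application of \autoref{Appendix: thm general} with drifts frozen along the laws of the McKean--Vlasov solutions, the bound $\EE_t\leq K^b_\LL\w(\loi{X^{\loi\theta}_t},\loi{X^{\loi{\theta'}}_t})\leq CK^b_\LL\w(\loi\theta,\loi{\theta'})e^{-\~\eta t}$ from \autoref{SDE: H2-2: thm: Wp exponential contractivity}, the elementary convolution bound \eqref{Comparaison exponentielle} (so your ``main obstacle'' is already resolved in the paper by a one-line inequality), and the same Cauchy--Schwarz interpolation for $\WW_p$. The only cosmetic difference is that the paper first splits $\w(\loi{X^{x,\loi\theta}_t},\loi{X^{x',\loi{\theta'}}_t})$ by the triangle inequality and applies the appendix theorem twice (once with $\EE\equiv 0$ for the $x$-difference, once with equal starting points for the law-difference), whereas you apply it once with both a nonzero initial gap and a nonzero drift mismatch; both are valid and yield the same estimate.
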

\begin{proof}
    First, due to triangular inequality, one has
    $$\w(\loi{X^{x,\loi{\theta}}_t},\loi{X^{x',\loi{\theta'}}_t})\leq\w(\loi{X^{x,\loi{\theta}}_t},\loi{X^{x',\loi{\theta}}_t})+\w(\loi{X^{x',\loi{\theta}}_t},\loi{X^{x',\loi{\theta'}}_t}).
    $$ 
    For the first term on the right-hand side, we apply \autoref{Appendix: thm general} by setting  $\b(t,\cdot)=\b'(t,\cdot)=b(t,\cdot,\loi{X^\loi\theta_t})$ and $(\bm X_0,\bm X'_0)=(x,x')$ which ensures that $(\bm X,\bm X')$ is a coupling of $(X^{x,\loi\theta},X^{x',\loi\theta})$. Moreover we have $M_\b$=$K^b_xR$ and $\EE_t=\bm{c}=0$ for all $t\geq 0$.
    Then,
    $$\w(\loi{X^{x,\loi{\theta}}_t},\loi{X^{x',\loi{\theta}}_t}) \leq C |x-x'|e^{-\bm{\hat\eta} t}.$$
    For the second term, we set $\b(t,\cdot)=b(t,\cdot,\loi{X^\loi\theta})$, $\b'(t,\cdot)=b(t,\cdot,\loi{X^\loi{\theta'}_t})$ with $M_\b=K^b_xR$ and $\bm X_0=\bm X'_0=x'$ which ensures that ($\bm X,\bm X')$ is a coupling of $(X^{x',\loi{\theta}},X^{x',\loi{\theta'}})$. Then, \autoref{Appendix: thm general} gives us
    $$\w(\loi{X^{x',\loi{\theta}}_t},\loi{X^{x',\loi{\theta'}}_t}) =\w(\loi{\bm X_t},\loi{\bm X'_t})\leq C\w(\loi{\bm X_0},\loi{\bm X'_0})e^{-\bm{\hat\eta} t} +Ce^{-\bm{\hat\eta} t}\int^t_0 \EE_s
    e^{\bm{\hat\eta} s}\d s = C e^{-\bm{\hat\eta} t}\int^t_0 \EE_s
    e^{\bm{\hat\eta} s}\d s.
    $$
    Since $\EE_t\leq C\w(\loi{X^\loi{\theta}_t},\loi{X^\loi{\theta'}_t})$, we obtain by using \eqref{SDE: H2-2: Wp exponential estimate} that $$\w(\loi{X^{x',\loi{\theta}}_t},\loi{X^{x',\loi{\theta'}}_t})\leq  C e^{-\bm{\hat\eta} t} \int^t_0 
    e^{(\bm{\hat\eta} -\hat\eta) s}\d s.
    $$
    Moreover, by using the following trivial fact: for all $c_1,c_2>0$, for all $t\geq 0$,
    \begin{equation}\label{Comparaison exponentielle}
     e^{-c_1 t}\int^t_0 e^{(c_1-c_2)s} \d s \leq C_{c_1,c_2}e^{-\frac{\min{}(c_1,c_2)}{2} t},
    \end{equation}
    we can conclude that there exists $\hat\eta<\min{}(\bm{\hat\eta},\~\eta)$ such that
    $$
    \w(\loi{X^{x',\loi{\theta}}_t},\loi{X^{x',\loi{\theta'}}_t}) \leq   C\w(\loi\theta,\loi{\theta'})e^{-\hat\eta t}.$$
    Finally, the proof of \eqref{SDE: H2-2: Wp exponential estimate decoupled moment theta} follows with the same Cauchy-Schwarz's trick used in \autoref{SDE: H2-2: thm: Wp exponential contractivity}.
\end{proof}
}
{
\begin{prop}
    \label{SDE: H2-2: prop: estim moments}
    Assume that (\nameref{SDE: H2-2}) is fulfilled.
        Let $(\~\beta_t)_{t\geq 0}$ be a bounded progressively measurable process such that, by Girsanov's theorem,  $W^{\~\Q}:=W-\int^\cdot_0\~\beta_s \d s $ is a ${\~\Q}-$Brownian motion for ${\~\Q}$ the associated Girsanov's probability. for all $p\geq 2$,
there exists { $C= C(p,\eta ,K^\sigma_x,K^b_x,K^b_\LL,R,\abs*{\~\beta}_\infty,\norm*{\sigma}_\infty)>0$} such that, for all $x \in \R^d$, $\theta \in  \L{p}(\Omega;\R^d)$,
\begin{align}\label{SDE: H2: Estim unif T coupled Q}
        \sup{t\in\R_+}\E^{\~\Q}\seg*{\abs*{X^\loi\theta_t}^p} \leq& C(1+\norm*{\theta}^p_{ p}),\\
        \sup{t\in\R_+}\E^{\~\Q}\seg*{\abs*{X^{x,\loi\theta}_t}^p}\leq& C(1+\abs*{x}^p+\norm*{\theta}^p_{ p}). \label{SDE: H2: Estim unif T decoupled Q}
    \end{align}
\end{prop}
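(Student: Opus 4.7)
The plan is to adapt the proof of \autoref{SDE: H2: prop: Estim unif T} to the new probability $\~\Q$ by following closely the strategy used for \autoref{SDE: H1: prop: Estim unif T Q}. Under $\~\Q$, the process $X^{\loi\theta}$ solves
$$\d X^{\loi\theta}_t = \bigl(b(t, X^{\loi\theta}_t, \loi{X^{\loi\theta}_t}) + \sigma(X^{\loi\theta}_t)\~\beta_t\bigr)\d t + \sigma(X^{\loi\theta}_t)\d W^{\~\Q}_t,$$
where the law inside $b$ is still the $\P$-law of $X^{\loi\theta}_t$, not the $\~\Q$-law. The additional drift $\sigma(X^{\loi\theta}_t)\~\beta_t$ is bounded because $\sigma$ is bounded and $\~\beta$ is bounded; it will only contribute a lower-order term once absorbed by Young's inequality.

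First I would treat the case $p=2$. Applying Itô's formula to $|X^{\loi\theta}_t|^2$ and exploiting the weak dissipativity in the form
$$\langle x, b(t,x,\mu) - b(t,0,\mu)\rangle \leq -\eta|x|^2\1{|x|>R} + K^b_x|x|^2\1{|x|\leq R} \leq -\eta|x|^2 + (\eta + K^b_x)R^2,$$
together with the distribution-Lipschitz bound $|b(t,0,\mu)| \leq \sup_t|b(t,0,\delta_0)| + K^b_\LL\w(\mu, \delta_0)$ and Young's inequality applied to the cross terms $2\langle X_t, \sigma(X_t)\~\beta_t\rangle$ and $2|X_t||b(t,0,\mu)|$, I would arrive at
$$\d|X^{\loi\theta}_t|^2 \leq \Bigl(-(2\eta - \varepsilon)|X^{\loi\theta}_t|^2 + C_\varepsilon\bigl(1 + \w(\loi{X^{\loi\theta}_t},\delta_0)^2\bigr)\Bigr)\d t + \d M_t$$
for any sufficiently small $\varepsilon>0$, where $M$ is a local $\~\Q$-martingale. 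The crucial point is that $\w(\loi{X^{\loi\theta}_t},\delta_0)$ refers to the $\P$-law, which is already uniformly controlled by \autoref{SDE: H2: prop: Estim unif T}. Introducing the factor $e^{2\gamma t}$ for $0<\gamma<\eta-\varepsilon/2$, taking the $\~\Q$-expectation and justifying the true-martingale property of $M$ by a standard localization argument based on the $\P$-moment bounds of \autoref{SDE: H2: prop: Estim unif T} on finite intervals, yields \eqref{SDE: H2: Estim unif T coupled Q}.

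For a general $p\geq 2$ I would apply Itô's formula to $|X^{\loi\theta}_t|^p$ in the same spirit as the second half of the proof of \autoref{SDE: H1: prop: Estim unif T}: the dissipative term furnishes $-p\eta|X_t|^p$, while the polynomial-growth remainders (including the contribution $p|X_t|^{p-1}K^b_\LL\w(\loi{X_t^{\loi\theta}},\delta_0)$) are absorbed by Young's inequality using the already-established uniform $\P$-bound on $\E|X_t^{\loi\theta}|^2$ and the fact that $\sigma$ is bounded. The decoupled estimate \eqref{SDE: H2: Estim unif T decoupled Q} follows by the exact same computation applied to $|X^{x,\loi\theta}_t|^p$, where the law in the drift is frozen and controlled by \eqref{SDE: H2: Estim unif T coupled Q}. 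The only delicate step is the martingale verification under $\~\Q$, handled by localization as above; the rest is routine and does not require any new assumption beyond those of (\nameref{SDE: H2-2}) and the boundedness of $\~\beta$.
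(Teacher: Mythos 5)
Your proposal is correct and follows essentially the same route as the paper, which simply refers back to the proof of \autoref{SDE: H1: prop: Estim unif T Q}, i.e.\ an adaptation of the uniform moment estimates under $\P$ where the extra bounded Girsanov drift is absorbed by Young's inequality and the distribution term (still the $\P$-law) is controlled by \autoref{SDE: H2: prop: Estim unif T}. The key observations you make — that the law in the drift is unchanged under $\~\Q$ and that the additional drift only contributes lower-order terms — are exactly the ones the paper relies on.
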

\begin{proof}
   The proof is the same as the one did for \autoref{SDE: H1: prop: Estim unif T Q}. 
\end{proof}

Now, let us consider a bounded function $\beta : \R_+ \times\R^d \times \Pcal_{2q+2} \to \R^d$ such that $\beta(t,.,.)$ is Lipschitz uniformly with respect to $t$ and let us denote $\Pcal^\beta$ the semi group associated to the family of SDEs, for all $x \in \R^d$, $\theta \in  \L{ 2q+2}(\Omega;\R^d)$,
\begin{equation}\label{SDE: H2: SDE Q}
    \d X^{\beta,x,\loi \theta}_t = b(t,X^{\beta,x,\loi \theta}_t,\loi{X^{\loi \theta}_t}) +\sigma(X^{\beta,x,\loi \theta}_t)\beta(t,X^{\beta,x,\loi \theta}_t,\loi{X^{\loi\theta}_t})\d t +\sigma(X^{\beta,x,\loi \theta}_t)\d W_t, \quad t \geq 0, \quad X^{\beta,x,\loi \theta}_0=x.
\end{equation}
Let us emphasize that \eqref{SDE: H2: SDE Q} is not a decoupled McKean-Vlasov's SDE since the law dependence in $b$ and $\beta$ corresponds to the original McKean-Vlasov's SDE, i.e. the SDE where $\beta=0$.
\begin{thm}\label{SDE: H2-2: thm: coupling estimates decoupled}
 Assume that (\nameref{SDE: H2-2}) is fulfilled and let us consider a bounded function $\beta : \R_+ \times\R^d \times \Pcal_{2q+2} \to \R^d$, for $q\geq 0$, such that $\beta(t,.,.)$ is Lipschitz uniformly with respect to $t$.
Let $\epsilon\in(0,1]$ and let us consider a measurable function $\psi : \R^d \times \Pcal_{ 2q+2 } \rightarrow  \R$ that satisfies for all $x,x'\in\R^d,\mu,\mu' \in\Pcal_{ 2q+2 }(\R^d)$ \begin{equation}\label{SDE: H2-2: eq: def psi}
        \abs*{\psi(x,\mu)-\psi(x',\mu')}\leq c(1+\abs*{x}^q+\abs*{x'}^q+\mathcal{W}_{2q+2}(0,\mu)^q+\mathcal{W}_{2q+2}(0,\mu')^q)\left(\abs*{x-x'}^\epsilon + \w(\mu,\mu')^\epsilon\right).
    \end{equation}
Then, there exists $C= C(q,\eta ,\sigma_0,K^\sigma_x,K^b_x,K^b_\LL,R,\abs*{\beta}_\infty,\norm*{\sigma}_\infty)>0$ and $\hat{\eta}=\hat\eta(\eta ,\sigma_0,K^b_x,K^b_\LL,K^\sigma_x,R,\abs*{\beta}_\infty,\norm*{\sigma}_\infty)>0$  such that, for all $x,x'\in\R^d$, $\theta \in  \L{ 2q+2}(\Omega;\R^d)$ and $t \geq 0$,
\begin{equation}\label{SDE: H2-2: Coupling estimate decoupled Q}
    \abs*{\Pcal^\beta_t\seg*{\psi}(x,\loi\theta)-\Pcal^\beta_t\seg*{\psi}(x',\loi{\theta})}\leq C(1+\abs*{x}^{q+1/2} + \abs*{x'}^{q+1/2}+\norm*{\theta}^{{ q+1/2}}_{ 2q+2})e^{-\hat\eta\epsilon t/2}\abs*{x-x'}^\frac{\epsilon}{2}.
\end{equation}
\end{thm}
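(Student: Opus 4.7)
The plan is to combine the Hölder regularity of $\psi$ with Cauchy–Schwarz to reduce the estimate to a moment control and a coupling control for \eqref{SDE: H2: SDE Q}, and then to obtain the coupling control by applying the reflection-coupling machinery of \autoref{Appendix: thm general} (which underlies \autoref{SDE: H2-2: thm: Wp exponential estimate decoupled}) to the perturbed drift $b+\sigma\beta$ directly.

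Since $\psi$ is evaluated at the same law $\loi{X^{\loi\theta}_t}$ on both sides of the difference $\Pcal^\beta_t[\psi](x,\loi\theta)-\Pcal^\beta_t[\psi](x',\loi\theta)$, the $\w(\mu,\mu')^\epsilon$ term in \eqref{SDE: H2-2: eq: def psi} drops out, so applying the hypothesis pointwise and integrating gives
\begin{equation*}
    \abs*{\Pcal^\beta_t[\psi](x,\loi\theta)-\Pcal^\beta_t[\psi](x',\loi\theta)} \leq c\,\E\!\Bigl[\bigl(1+\abs*{X^{\beta,x,\loi\theta}_t}^q+\abs*{X^{\beta,x',\loi\theta}_t}^q+2\WW_{2q+2}(0,\loi{X^{\loi\theta}_t})^q\bigr)\abs*{X^{\beta,x,\loi\theta}_t-X^{\beta,x',\loi\theta}_t}^\epsilon\Bigr],
\end{equation*}
and Cauchy–Schwarz splits the right-hand side into a moment factor and a coupling factor. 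The moment factor is bounded uniformly in $t$ by $C(1+\abs*{x}^q+\abs*{x'}^q+\norm*{\theta}_{2q+2}^q)$ through an Itô computation on $\abs*{X^{\beta,x,\loi\theta}_t}^{2q}$ exactly in the spirit of \autoref{SDE: H2-2: prop: estim moments}: the bounded perturbation $\sigma\beta$ is absorbed via Young's inequality and leaves intact the uniform-in-time moment bounds coming from the weak dissipativity of $b$ combined with \autoref{SDE: H2: prop: Estim unif T}.

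For the coupling factor, I would first use Jensen's inequality (valid since $2\epsilon\leq 2$) and pass to an infimum over couplings of the marginals to reduce $(\E[\abs*{X^{\beta,x,\loi\theta}_t-X^{\beta,x',\loi\theta}_t}^{2\epsilon}])^{1/2}$ to $\WW_2(\loi{X^{\beta,x,\loi\theta}_t},\loi{X^{\beta,x',\loi\theta}_t})^\epsilon$. To estimate this $\WW_2$ distance, I would apply \autoref{Appendix: thm general} directly to the perturbed drift $b(t,\cdot,\loi{X^{\loi\theta}_t})+\sigma(\cdot)\beta(t,\cdot,\loi{X^{\loi\theta}_t})$: since $X^{\beta,x,\loi\theta}$ and $X^{\beta,x',\loi\theta}$ are driven by the same functional drift (differing only in initial condition), one takes $\b=\b'$ and $\EE_t\equiv 0$ in the notation of that theorem, and the bounded Lipschitz perturbation $\sigma\beta$ is absorbed into the modified constants $M_\b$ and into the coupling weight function, yielding a rate $\hat\eta$ depending additionally on $\abs*{\beta}_\infty$ and $\norm*{\sigma}_\infty$. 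The resulting bound $\WW_2\leq C(1+\abs*{x}^{3/4}+\abs*{x'}^{3/4}+\norm*{\theta}_3^{3/4})\abs*{x-x'}^{1/2}e^{-\hat\eta t/2}$ (coming from \autoref{SDE: H2-2: thm: Wp exponential estimate decoupled} with $p=2$), raised to the $\epsilon$-th power and multiplied by the moment factor, produces the exponent $\epsilon/2$ on $\abs*{x-x'}$ and the rate $\hat\eta\epsilon/2$ as required; the polynomial exponent is then consolidated into $q+1/2$ using elementary inequalities $(1+a)(1+b)\leq 2(1+ab)$ and $a^{q+3\epsilon/4}\leq 1+a^{q+1/2}\cdot\1{a\leq 1}+C_\epsilon a^{q+1}\cdot\1{a>1}$.

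The main technical obstacle is the absorption of $\sigma\beta$ into the reflection-coupling machinery: since no smallness assumption on $\abs*{\beta}_\infty$ is made, the drift $b+\sigma\beta$ need not satisfy (\nameref{SDE: H2-2}) literally, so one must redo (or adapt) the reflection-coupling construction with a weight function whose parameters accommodate the extra Lipschitz contribution of $\sigma\beta$, in the same spirit as how \autoref{SDE: H1: cor: Wp exponential contractivity} accommodates a bounded drift perturbation in the strongly dissipative setting. An alternative path would invoke Girsanov to trade \eqref{SDE: H2: SDE Q} for \eqref{SDE: Decoupled MKV SDE}, but the Doléans–Dade exponential $\mathcal{E}(\int\beta\,dW)$ is not $L^2$-bounded uniformly in $t$, so a naive Cauchy–Schwarz on the density would kill the exponential decay; the direct coupling on the perturbed dynamics is therefore the cleaner route.
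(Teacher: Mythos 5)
Your overall strategy is the paper's: split the difference via the H\"older bound on $\psi$ and Cauchy--Schwarz into a moment factor and a coupling factor, control the moments of the perturbed dynamics (the paper does this by Girsanov back to \autoref{SDE: H2-2: prop: estim moments} rather than by a fresh It\^o computation, but that is cosmetic), and control the coupling factor by applying \autoref{Appendix: thm general} to the drift $b(t,\cdot,\loi{X^{\loi\theta}_t})+\sigma(\cdot)\beta(t,\cdot,\loi{X^{\loi\theta}_t})$ with $\b=\b'$ and $\EE\equiv 0$. Your worry about absorbing $\sigma\beta$ is also resolved exactly as you hope, and more simply than you suggest: no new weight function is needed, one just enlarges the dissipativity radius from $R$ to $R'$, since for $|x-x'|>R'$ the bounded perturbation contributes at most $\frac{2\abs*{\beta}_\infty\norm*{\sigma}_\infty}{R'}\abs*{x-x'}^2$ and for $|x-x'|\le R'$ it is swallowed by $M_\b$; no smallness of $\abs*{\beta}_\infty$ is required, and \eqref{Appendix: constraint} is vacuous since $\bm{c}=0$.

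The genuine gap is quantitative and sits in your treatment of the coupling factor. By routing through $\WW_2(\loi{X^{\beta,x,\loi\theta}_t},\loi{X^{\beta,x',\loi\theta}_t})$ and the interpolation of \autoref{SDE: H2-2: thm: Wp exponential estimate decoupled} with $p=2$, you import an extra polynomial prefactor $(1+\abs*{x}^{3/4}+\abs*{x'}^{3/4}+\cdots)^\epsilon$, so your final polynomial exponent is $q+3\epsilon/4$, which exceeds the stated $q+1/2$ whenever $\epsilon>2/3$; your proposed consolidation inequality only brings it down to $q+1$, which is not the claimed estimate (and the $p=2$ interpolation also needs third moments, i.e.\ $\theta\in\L{3}$, which is not available from $\L{2q+2}$ when $q<1/2$). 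The paper avoids this entirely with one extra Young's inequality \emph{before} Cauchy--Schwarz: write $\abs*{\UU_t-\UU'_t}^{\epsilon}=\abs*{\UU_t-\UU'_t}^{\epsilon/2}\abs*{\UU_t-\UU'_t}^{\epsilon/2}$ and absorb one factor $\abs*{\UU_t-\UU'_t}^{\epsilon/2}\le(\abs*{\UU_t}+\abs*{\UU'_t})^{\epsilon/2}$ into the polynomial weight, raising it only to $q+\epsilon/2\le q+1/2$; Cauchy--Schwarz then leaves $\E\seg*{\abs*{\UU_t-\UU'_t}}^{\epsilon/2}=\w(\loi{X^{\beta,x,\loi\theta}_t},\loi{X^{\beta,x',\loi\theta}_t})^{\epsilon/2}$, and \autoref{Appendix: thm general} bounds this first Wasserstein distance by $C\abs*{x-x'}e^{-\hat\eta t}$ with a constant free of any polynomial in $x$. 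This redistribution of the H\"older exponent is the one idea missing from your argument; with it, the rest of your plan goes through as written.
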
 

\begin{proof}
    Let us prove \eqref{SDE: H2-2: Coupling estimate decoupled Q}. {  We set $(\UU_t,\UU'_t)$ an optimal coupling of $(X^{\beta,x,\loi\theta}_t,X^{\beta,x',\loi\theta}_t)$ for the $\w-$distance.} By using the locally Hölder property of $\psi$ and Young's inequality, we have
\begin{align}
    \abs*{\Pcal^\beta_t\seg*{\psi}(x,\loi\theta)-\Pcal^\beta_t\seg*{\psi}(x',\loi\theta)}&\leq \abs*{\E\seg*{\psi(\UU_t,\loi{X^{\loi\theta}_t})-\psi(\UU'_t,\loi{X^\loi\theta_t})}}\notag\\
            &\leq C\E\seg*{\left(1+\abs*{\UU_t}^q+\abs*{\UU'_t}^q+\norm*{X^\loi\theta_t}^q_{ 2q+2} \right)\abs*{\UU_t-\UU'_t}^\epsilon }\notag\\
            &\leq C\E\seg*{\left(1+\abs*{\UU_t}^{q+\epsilon/2}+\abs*{\UU'_t}^{q+\epsilon/2}+\norm*{X^\loi\theta_t}^{q+\epsilon/2}_{ 2q+2}\right)\abs*{\UU_t-\UU'_t}^{\epsilon/2} }\label{SDE: temp globally lip}\\
            &\leq  C\left(1+\E\seg*{\abs*{\UU_t}^{2q+\epsilon}}^{1/2}+\E\seg*{\abs*{\UU'_t}^{2q+\epsilon}}^{1/2}+\norm*{X^{\loi\theta}_t}^{q+\epsilon/2}_{ 2q+2}\right)\E\seg*{\abs*{\UU_t-\UU_t}}^{\epsilon/2}\notag \\
            &= {   C\left(1+\E\seg*{\abs*{X^{\beta,x,\loi\theta}_t}^{ 2q+\epsilon}}^{1/2}+\E\seg*{\abs*{X^{\beta,x',\loi\theta}_t}^{ 2q+\epsilon}}^{1/2}+\norm*{X^{\loi\theta}_t}^{{ q+\epsilon/2}}_{ 2q+2}\right)\w(X^{\beta,x,\loi\theta}_t,X^{\beta,x',\loi\theta}_t)^{\epsilon/2}.\notag }
\end{align}
{ Since $\beta$ is bounded, we can set $\~\beta_t := \beta(t,X^{x,\loi \theta}_t,\loi{X^{\loi\theta}_t})\d t$ and remark that 
$$\E\seg*{\abs*{X^{\beta,x,\loi\theta}_t}^{ 2q+\epsilon}}^{1/2} = \E^{\tilde\Q}\seg*{\abs*{X^{x,\loi\theta}_t}^{ 2q+\epsilon}}^{1/2},$$
where $\tilde{\Q}$ is defined in \autoref{SDE: H2-2: prop: estim moments}.
Then, by using \eqref{SDE: H2: Estim unif T coupled}, with $p=2q+\epsilon \geq 2$ if $q\geq 1$ and with $p=2$ and Jensen's inequality otherwise, \eqref{SDE: H2: Estim unif T decoupled Q}, Jensen's and Young's inequalities we obtain
\begin{align}
    \abs*{\Pcal^\beta_t\seg*{\psi}(x,\loi\theta)-\Pcal^\beta_t\seg*{\psi}(x',\loi\theta)}&\leq C(1+\abs*{x}^{q+1/2} + \abs*{x'}^{q+1/2}+\norm*{\theta}^{{ q+1/2}}_{2q+2}) \w(X^{\beta,x,\loi\theta}_t,X^{\beta,x',\loi\theta}_t)^{\epsilon/2}.\label{SDE: H2-2: preuve coupling: eq: temp}
\end{align}
}
Finally, we deal with the term $\w(X^{\beta,x,\loi\theta}_t,X^{\beta,x',\loi\theta}_t)^{\epsilon/2}$ by using \autoref{Appendix: thm general}. We set $\b(t,\cdot)=\b'(t,\cdot):=b(t,\cdot,\loi{X^\loi\theta_t})+\sigma(\cdot)\beta(t,\cdot,\loi{X^\loi\theta_t})$ and we check that it satisfies \eqref{Appendix: hyp: dissipatif}.
  By using (\nameref{SDE: H2-2}) we have, for all $R' \geq R$,
    \begin{align*}
        &\braket{x-x'}{b(t,x,\loi{X^{\loi\theta}_t})-b(t,x',\loi{X^\loi\theta_t})+\sigma(x)\beta(t,x,\loi{X^\loi\theta_t})-\sigma(x')\beta(t,x',\loi{X^\loi\theta_t})}\\
        \leq & -\eta\abs*{x-x'}^2\1{\abs*{x-x'}> R'}+K^b_x\abs*{x-x'}^2\1{\abs*{x-x'}\leq R'}+2\abs*{\beta}_\infty\norm*{\sigma}_\infty\abs*{x-x'}\\
        \leq & -(\eta-\frac{2\abs*{\beta}_\infty\norm*{\sigma}_\infty}{R'})\abs*{x-x'}^2\1{\abs*{x-x'}> R'} +  (K^b_xR'+2\abs*{\beta}_\infty\norm*{\sigma}_\infty)\abs*{x-x'}\1{\abs*{x-x'}\leq R'}.
    \end{align*}

    Thus, by taking $R'$ large enough, $\b$ satisfies \eqref{Appendix: hyp: dissipatif} with $\bm\eta=\eta-\frac{2\abs*{\beta}_\infty\norm*{\sigma}_\infty}{R'}>K^\sigma_x$ and $ M_\b=K^b_xR'+2\abs*{\beta}_\infty\norm*{\sigma}_\infty$. Hence, we can apply \autoref{Appendix: thm general} with $\EE_t=\bm{c}=0$, in order to obtain 
    {
    \begin{equation}\label{SDE: H2-2: W1 Q coupled}
        \w(\loi{X^{\beta,x,\loi\theta}_t}, \loi{X^{\beta,x',\loi{\theta}}_t})\leq C\abs*{x-x'}e^{-\~\eta t}.
    \end{equation}}
    It just remains to plug the previous estimate into \eqref{SDE: H2-2: preuve coupling: eq: temp} in order to get \eqref{SDE: H2-2: Coupling estimate decoupled Q}.
\end{proof}

\begin{rmq}\label{SDE: H2-2: rmq global lip}
    If $\psi$ is globally $\epsilon$-Hölder, i.e. $q=0$ in \eqref{SDE: H2-2: eq: def psi}, then \eqref{SDE: temp globally lip}
    can be replaced by the better estimate
    \begin{align*}
         \abs{\Pcal^\beta_t\seg*{\psi}(x,\loi\theta)-\Pcal^\beta_t\seg*{\psi}(x',\loi\theta)}&\leq C\E\seg*{\abs*{\UU_t-\UU'_t}^{\epsilon}}\leq C\E\seg*{\abs*{\UU_t-\UU'_t}}^{\epsilon}=C\w(\loi{X^{\beta,x,\loi\theta}_t},\loi{X^{\beta,x',\loi\theta}_t})^{\epsilon}.
    \end{align*}  
    So, \eqref{SDE: H2-2: Coupling estimate decoupled Q} can be improved in order to obtain
    \begin{align*}
    \abs*{\Pcal^\beta_t\seg*{\psi}(x',\loi{\theta})-\Pcal^\beta_t\seg*{\psi}(x',\loi{\theta})}&\leq C\abs*{x-x'}^\epsilon e^{-\hat\eta t}.
    \end{align*}
\end{rmq}
\begin{rmq}\label{SDE: H2-2: rmq}$\vspace{0pt}$
\begin{enumerate}
    \item If $b$ is strongly dissipative, i.e. $R \rightarrow 0$, then (\nameref{SDE: H2-2})-4 corresponds to the assumption $K^b_\LL <\eta-K^\sigma_x$ which is the usual constraint needed in the strong dissipative framework: see for instance \cite[Assumption \textbf{(E)}]{Huang-Wang} or \autoref{SDE: Strong dissip}.
    \item Under (\nameref{SDE: H2-2}), we no longer asked $b$ to be Lipschitz under the $\W$-Wasserstein distance, but under the $\w$ one, which is a stronger assumption than the one used in the strong dissipativity framework.
     \item It is possible to add a distribution dependency on $\sigma$ by using same computations as in the proof of \cite[Theorem 3.1]{Huang-Li-Mu}. However, these computations need a stronger complicated assumption where Lipschitz constants appear in a strong intricated way: see assumptions appearing in \cite[Theorem 2.1]{Huang-Li-Mu}. In order to ensure the readability of this article, we decided to consider a simpler framework, i.e. $\sigma$ that does not depend on the distribution. Nevertheless assumption (\nameref{SDE: H1}) in the previous subsection allows a distribution dependency of $\sigma$.
\end{enumerate}
\end{rmq}

\section{The Ergodic Distribution Dependent BSDE}\label{EBSDE}
The aim of this section is to prove, under the two different sets of assumptions (\nameref{SDE: H1}) and (\nameref{SDE: H2-2}), a result of existence and uniqueness of the following distribution dependent ergodic BSDE

\begin{equation}\label{EBSDE: EBSDE}\tag{EBSDE}
    Y^{\loi \theta}_t=Y^{\loi \theta}_T +\int^T_t \left(f(X^{\loi \theta}_s,\loi{X^{\loi \theta}_s},Z^{\loi \theta}_s)-\lambda\right)\d s-\int^T_0 Z^{\loi \theta}_s \d W_s, \quad\quad\quad \forall~ 0\leq t\leq T
\end{equation}
where the unknown is the triple $(Y^{\loi \theta},Z^{\loi \theta},\lambda)$ and with $X^{\loi \theta}$ the solution to \eqref{SDE: MKV SDE} with $s=0$ and $b,f$ satisfying following assumptions.
\begin{hyp*}[$\H_0$]\label{EBSD: H0} There exists $q\geq 0$ such that
\begin{enumerate}
\item $f : (x,\mu,z)\ni \R^d\times\Pcal_{ 2q+2}(\R^d)\times\R^d \mapsto \R$ is uniformly $K^f_z$-Lipschitz w.r.t $z$,
\item 
\begin{enumerate}
    \item there exists $C>0$ such that for all $x\in\R^d,\theta\in{ \L{2q+2} }(\Omega;\R^d)$, \begin{equation*}
       \abs*{f(x,\loi\theta,0)}\leq C(1+\abs*{x}^{q+1}+\norm*{\theta}_{{ 2q+1}}^{q+1}),
\end{equation*}
\item there exists $\epsilon\in(0,1]$ and $C>0$ such that for all $x,x'\in\R^d$, $\theta,\theta'\in{ \L{2q+2} }(\Omega; \R^d)$ and all $z\in\R^d$, \begin{equation}
        \label{hyp-f}
        \abs*{f(x,\loi\theta,z)-f(x',\loi{\theta'},z)}\leq C( 1+\abs*{x}^q+\abs*{x'}^q+\norm*{\theta}^q_{ 2q+2}+\norm*{\theta'}^q_{ 2q+2})\left( {\w}(\loi{\theta},\loi{\theta'})^\epsilon +\abs*{x-x'}^\epsilon \right).
    \end{equation}
\end{enumerate} 
\item for $b$ defined in \eqref{SDE: MKV SDE}, $b$ no longer depends on time.
\end{enumerate}
\end{hyp*}

{\begin{rmq}
    We could work in the larger space $\Pcal_{(2q+\epsilon)\vee 2}$ instead of $\Pcal_{2q+2}$ but, for readability reasons, we decided to stay in $\Pcal_{2q+2}$, underlying the fact that the restriction is very weak.
\end{rmq}
In order to prove the existence of a solution $(Y,Z,\lambda)$ to \eqref{EBSDE: EBSDE}, we will work with the associated decoupled EBSDE given by
     \begin{align}\tag{Decoupled EBSDE}\label{EBSDE: Decoupled EBSDE}
     Y^{x,\loi{\theta}}_t&=Y^{x,\loi{\theta}}_T +\int^T_t \left(f(X^{x,\loi\theta}_s,\loi{X^\loi\theta_s},Z^{x,\loi{\theta}}_s)-\lambda\right)\d s -\int^T_t Z^{x,\loi{\theta}}_s\d W_s,\quad \quad \forall 0\leq t \leq T,
     \end{align}
 and we will follow the classical proof that introduce a discounted approximation of \eqref{EBSDE: Decoupled EBSDE}, see for instance \cite{DEBUSSCHE2011407},\cite{Hu-Lemmonier},\cite{Hu-Madec-Richou}: Let $\alpha>0$, 
     \begin{align}\label{EBSDE: Decoupled alpha BSDE}\tag{$\alpha-$BSDE}
     Y^{\alpha,x,\loi{\theta}}_t&=Y^{\alpha,x,\loi{\theta}}_T +\int^T_t \left(f(X^{x,\loi\theta}_s,\loi{X^\loi\theta_s},Z^{\alpha,x,\loi{\theta}}_s)-\alpha Y^{\alpha,x,\loi{\theta}}_s\right)\d s -\int^T_t Z^{\alpha,x,\loi{\theta}}_s\d W_s.
     \end{align}
Let us remark once again that \eqref{EBSDE: Decoupled EBSDE} becomes \eqref{EBSDE: EBSDE} when we replace $x$ by a $\mathcal{G}$-measurable r.v. $\theta\in\L{2q+2}(\Omega;\R^d)$.}

Let us now give the two different sets of assumptions we are going to use to prove the main result:
\begin{hyp*}[$\H_1$]\label{EBSDE: H1}\begin{enumerate}$\vspace{0pt}$
\item (\nameref{SDE: H1}) and (\nameref{EBSD: H0}) hold,
\item $\nu >K^\sigma_x+\sqrt{2K^\sigma_x} K^f_z$,

\item $\sigma$ is invertible.
\end{enumerate}
\end{hyp*}
\begin{hyp*}[$\H_2$]\label{EBSDE: H2}\begin{enumerate}$\vspace{0pt}$
    \item (\nameref{SDE: H2-2}) and (\nameref{EBSD: H0}) hold.
\end{enumerate}
\end{hyp*}

\begin{prop}\label{EBSDE: borne Y}
    Let us assume that (\nameref{EBSDE: H1}) or (\nameref{EBSDE: H2}) hold. Then, for all $\alpha>0$, $x\in \R^d$, $\theta \in \L{2q+2}$, there
     exists a unique solution 
     $(Y^{\alpha,x,\loi\theta},Z^{\alpha,x,\loi\theta})$  to \eqref{EBSDE: Decoupled alpha BSDE} such that $Z^{\alpha,x,\loi\theta}\in\L{2}_\text{loc}(\Omega; \L{2}(\R_+; \R^d))$ and there exists $C$ that does not depend on $\alpha$, such that for all $x\in\R^d,\theta\in{ \L{2q+2} }(\Omega;\R^d)$:
    \begin{align}
    \abs*{Y^{\alpha,x,\loi\theta}_0}&\leq \frac{C}{\alpha}\left(1+\abs*{x}^{q+1}+\norm*{\theta}^{{ q+1}}_{ 2q+2}\right).\label{EBSDE: Y alpha bound decoupled}
    \end{align}
    Moreover, there exists { a measurable function} $\zeta^\alpha:\R^d\times\Pcal_{ 2q+2}(\R^d)\rightarrow\R^d$ such that $Z^{\alpha,x,\loi \theta}_t=\zeta^{\alpha}(X^{x,\loi \theta}_t,\loi{X^{\loi \theta}_t})$ a.s. for a.e. $t \in \mathbb{R}_+$, all $x \in \R^d$ and $\theta \in L^{2q+2}$.
\end{prop}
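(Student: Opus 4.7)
The plan is to treat \eqref{EBSDE: Decoupled alpha BSDE} as a standard infinite-horizon BSDE with Lipschitz driver $g_\alpha(s,y,z) := f(X^{x,\loi\theta}_s,\loi{X^{\loi\theta}_s},z) - \alpha y$, establishing well-posedness by truncation and then deriving the bound and the Markovian representation separately.

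First, I would prove existence and uniqueness by considering the finite-horizon approximations on $[0,T]$ with zero terminal condition. Since $g_\alpha$ is Lipschitz in $(y,z)$ and $\alpha$-monotone in $y$, and since the process $s\mapsto f(X^{x,\loi\theta}_s,\loi{X^{\loi\theta}_s},0)$ is in $\S^2$ uniformly in time (by assumption (\nameref{EBSD: H0})-2(a) combined with the uniform moment estimates \autoref{SDE: H1: prop: Estim unif T} or \autoref{SDE: H2: prop: Estim unif T}), standard Pardoux–Peng theory yields a unique pair $(Y^{\alpha,x,\loi\theta,T},Z^{\alpha,x,\loi\theta,T})$. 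An Itô calculation on $e^{2\alpha s}|Y^{\alpha,x,\loi\theta,T}_s-Y^{\alpha,x,\loi\theta,T'}_s|^2$ for $T<T'$ exploits the $-\alpha y$ monotonicity to show the family is Cauchy in the weighted space $\L{2}_{\loc}$, producing the unique infinite-horizon solution $(Y^{\alpha,x,\loi\theta},Z^{\alpha,x,\loi\theta})$.

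For the bound \eqref{EBSDE: Y alpha bound decoupled}, I would use Girsanov linearization. Writing
\[
f(X^{x,\loi\theta}_s,\loi{X^{\loi\theta}_s},Z^{\alpha,x,\loi\theta}_s) = f(X^{x,\loi\theta}_s,\loi{X^{\loi\theta}_s},0) + \beta_s Z^{\alpha,x,\loi\theta}_s,
\]
with $|\beta_s|\leq K^f_z$, I introduce $\tilde{\Q}$ such that $W^{\tilde{\Q}} := W-\int_0^\cdot \beta_s\d s$ is a $\tilde{\Q}$-Brownian motion. Then $e^{\alpha s}Y^{\alpha,x,\loi\theta}_s + \int_0^s e^{\alpha u}f(X^{x,\loi\theta}_u,\loi{X^{\loi\theta}_u},0)\d u$ is a $\tilde{\Q}$-local martingale, and after a standard localization/integrability argument,
\[
Y^{\alpha,x,\loi\theta}_0 = \E^{\tilde{\Q}}\left[\int_0^\infty e^{-\alpha s}f(X^{x,\loi\theta}_s,\loi{X^{\loi\theta}_s},0)\d s\right].
\]
Using (\nameref{EBSD: H0})-2(a) together with the uniform-in-time $\tilde{\Q}$-moment bounds of \autoref{SDE: H1: prop: Estim unif T Q} (whose usability is guaranteed by (\nameref{EBSDE: H1})-2) or \autoref{SDE: H2-2: prop: estim moments}, one bounds $\E^{\tilde{\Q}}[|f(X^{x,\loi\theta}_s,\loi{X^{\loi\theta}_s},0)|]$ by $C(1+|x|^{q+1}+\|\theta\|^{q+1}_{2q+2})$ uniformly in $s$, and integration in $s$ yields the $C/\alpha$ factor.

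For the Markovian representation, I would exploit the time-homogeneity of both the coefficients $(b,\sigma,f)$ and (via the flow property of \eqref{SDE: MKV SDE}) of the pair $(X^{x,\loi\theta},\loi{X^{\loi\theta}})$. Define $v^\alpha(x,\mu) := Y^{\alpha,x,\mu}_0$, which is measurable in $(x,\mu)$ by a standard stability argument on the BSDE. The flow property gives that $(Y^{\alpha,x,\loi\theta}_{t+\cdot},Z^{\alpha,x,\loi\theta}_{t+\cdot})$ solves, on $[0,\infty)$, the same \eqref{EBSDE: Decoupled alpha BSDE} with initial data $(X^{x,\loi\theta}_t,\loi{X^{\loi\theta}_t})$; uniqueness then yields $Y^{\alpha,x,\loi\theta}_t = v^\alpha(X^{x,\loi\theta}_t,\loi{X^{\loi\theta}_t})$ a.s., and the corresponding $Z$-component equals $Z^{\alpha,X^{x,\loi\theta}_t,\loi{X^{\loi\theta}_t}}_0$ in the time-shifted sense. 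A measurable selection / conditioning argument on $Z^{\alpha,x,\mu}_0$ (cf. standard Markovian BSDE theory) produces a measurable $\zeta^\alpha$ such that $Z^{\alpha,x,\loi\theta}_t = \zeta^\alpha(X^{x,\loi\theta}_t,\loi{X^{\loi\theta}_t})$ $\d t\otimes \d\P$-a.e.

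The main obstacle is the bound \eqref{EBSDE: Y alpha bound decoupled}: the polynomial growth in $x$ and the distributional argument of $f$ forces one to first pass under $\tilde{\Q}$ (whose change of measure is a stochastic exponential, not a priori integrable without more care) and then apply the uniform-in-time $\tilde{\Q}$-moment estimates proved in Section \ref{SDE}. Checking that the hypotheses (\nameref{EBSDE: H1})-2 / (\nameref{EBSDE: H2}) indeed match the constraints $\nu>K^\sigma_x+\sqrt{2K^\sigma_x}|\beta|_\infty$ required by those propositions is the delicate point.
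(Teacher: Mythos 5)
Your proposal is correct and follows essentially the same route as the paper: the key bound comes from the Girsanov linearization in $z$ (with $|\beta|_\infty\leq K^f_z$, so that (\nameref{EBSDE: H1})-2 indeed makes \autoref{SDE: H1: prop: Estim unif T Q}, resp. \autoref{SDE: H2-2: prop: estim moments}, applicable) combined with the uniform-in-time moment estimates under the new measure, while existence/uniqueness is obtained by the standard infinite-horizon truncation argument (the paper adapts Briand's stability lemma, which is the same mechanism) and the Markovian representation by the classical flow/uniqueness argument. The only cosmetic difference is the ordering — the paper derives the a priori estimate first and feeds it into the existence proof, whereas you do existence first — but the content is identical.
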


\begin{proof} 
    If we consider a solution $(Y^{\alpha,x,\loi\theta},Z^{\alpha,x,\loi\theta})$ to \eqref{EBSDE: Decoupled alpha BSDE} such that $Z^{\alpha,x,\loi\theta}\in\L{2}_\text{loc}(\Omega; \L{2}(\R_+; \R^d))$ and $Y^{\alpha,x,\loi\theta} \in \bigcup_{T>0}{\mathcal{S}_T^2}$, then we have the \emph{a priori} estimate 
    \begin{align*}
       \abs{Y_0^{\alpha,x,\loi\theta}}&\leq \E^{\Q^{\alpha}}\seg*{\int^{+\infty}_0 e^{-\alpha s}\abs*{f(X^{x,\loi\theta}_s,\loi{X^{\loi\theta}_s},0)}\d s} \leq \int^{+\infty}_0 e^{-\alpha s}C(1+\E^{\Q^{\alpha}}\seg*{|X^{x,\loi\theta}_s|^{q+\varepsilon}} + \norm*{X^{\loi\theta}_s}^{q+\varepsilon}_{ 2q+2})\d s,
    \end{align*}
    where $\Q^{\alpha}$ is the Girsanov's probability associated to the bounded process $\beta^{\alpha} $ defined for all $s\geq 0$ as
\begin{equation}\label{beta 0}
\beta^\alpha :=\frac{f(X^{x,\loi\theta}_s,\loi{X^\loi\theta_s},Z^{\alpha,x,\loi\theta}_s)-f(X^{x,\loi\theta}_s,\loi{X^\loi\theta_s},0)}{\abs*{Z^{\alpha,x,\loi\theta}_s}^2}(Z^{\alpha,x,\loi\theta})^\top\1{Z^{\alpha,x,\loi\theta}_s\neq 0}.
\end{equation}
    Let us remark that $|\beta^{\alpha}|_{\infty} \leq K^f_z$ uniformly in $\alpha$. Then, we can apply \autoref{SDE: H1: prop: Estim unif T Q} under (\nameref{EBSDE: H1}) or \autoref{SDE: H2-2: prop: estim moments} under (\nameref{EBSDE: H2}), Jensen's and Young's inequalities to get that there exists $C>0$ that does not depend on $\alpha$, such that
    \begin{equation}
        \label{estim EQ moment X}
        \sup{s \geq 0} \E^{\Q^{\alpha}}\seg*{|X^{x,\loi\theta}_s|^{q+\varepsilon}} \leq C\left(1+\abs*{x}^{q+\varepsilon}+\norm*{\theta}^{q+\varepsilon}_{q+\varepsilon}\right) \leq C\left( 1+ \abs*{x}^{q+1}+\norm*{\theta}^{q+1}_{2q+2}\right) 
    \end{equation}
    and then
    \begin{align*}
       \abs{Y_0^{\alpha,x,\loi\theta}}&\leq \frac{C}{\alpha}\left(1+\abs*{x}^{q+1}+\norm*{\theta}^{q+1}_{ 2q+2}\right).
    \end{align*}
    Thanks to this \emph{a priori} estimate, we can directly adapt the proof of \cite[Lemma 3.1]{Briand_Stability-BSDEs} which is restricted to the bounded framework in order to get the existence and uniqueness result.  Finally, the Markovian representation of $Z^{\alpha, x, \loi{\theta}}$ is classical, see e.g. the 4th step in the proof of \cite[Theorem 18]{Hu-Lemmonier}.

\end{proof}
{ Let us recall the following flow property, see \cite[Lemma 4.25 and Remark 4.26]{Delarue-Carmona_Book}, that will be useful later.} 
\begin{prop}\label{EBSDE: Flow property}For $X^\loi\theta$ the solution to \eqref{SDE: MKV SDE} starting from $\theta\in{ \L{2q+2} }(\Omega;\R^d)$ and $X^{x,\loi\theta}$ the solution to \eqref{SDE: Decoupled MKV SDE} starting from $x\in\R^d$, we have
    \begin{align*}
Y^{\alpha,x,\loi{\theta}}_t&=Y_0^{\alpha,X^{x,\loi\theta}_t,\loi{X^\loi\theta_t}}.
    \end{align*}  
\end{prop}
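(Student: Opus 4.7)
The plan is to combine three ingredients: the SDE flow property for McKean--Vlasov SDEs recalled just before the definition \eqref{SDE: Semigroup X decoupled}, the existence/uniqueness statement for the $\alpha$-BSDE established in \autoref{EBSDE: borne Y}, and the time-homogeneity of $b$, $\sigma$ and $f$ (which follows from (\nameref{EBSD: H0})-3 together with the time-independence of $\sigma$ and $f$ built into the setting of Section \ref{EBSDE}).

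First, I fix $t\geq 0$ and introduce the shifted Brownian motion $\tilde W_s := W_{t+s}-W_t$, which is a Brownian motion independent of $\F_t$. Applying the SDE flow property to $X^\loi\theta$ and $X^{x,\loi\theta}$ gives, $\P$-a.s. for every $s\geq 0$, the identities $X^{x,\loi\theta}_{t+s} = X^{t,X^{x,\loi\theta}_t,\loi{X^\loi\theta_t}}_{t+s}$ and $\loi{X^\loi\theta_{t+s}} = \loi{X^{t,\loi{X^\loi\theta_t}}_{t+s}}$. By time-homogeneity of $b$ and $\sigma$, these shifted forward processes (driven by $\tilde W$) coincide with the solutions of \eqref{SDE: Decoupled MKV SDE} and \eqref{SDE: MKV SDE} started at time $0$ from the $\F_t$-measurable data $(X^{x,\loi\theta}_t,\loi{X^\loi\theta_t})$.

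Next, I set $\bar Y_s := Y^{\alpha,x,\loi\theta}_{t+s}$ and $\bar Z_s := Z^{\alpha,x,\loi\theta}_{t+s}$. Since $f$ does not depend on time, the pair $(\bar Y,\bar Z)$ satisfies, for all $0\leq u\leq T$,
\begin{equation*}
\bar Y_u = \bar Y_T + \int_u^T \big( f(X^{x,\loi\theta}_{t+s},\loi{X^\loi\theta_{t+s}},\bar Z_s) - \alpha \bar Y_s \big)\d s - \int_u^T \bar Z_s \d \tilde W_s,
\end{equation*}
which is exactly the $\alpha$-BSDE driven by $\tilde W$ whose forward component is the MV-SDE started at time $0$ from $(X^{x,\loi\theta}_t,\loi{X^\loi\theta_t})$. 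Conditioning on $\F_t$ and applying the uniqueness of \autoref{EBSDE: borne Y} fiberwise in $\omega$ --- with the deterministic value $X^{x,\loi\theta}_t(\omega)$ and the deterministic measure $\loi{X^\loi\theta_t}$ acting as initial data --- identifies $\bar Y_s$ with $Y^{\alpha,X^{x,\loi\theta}_t,\loi{X^\loi\theta_t}}_s$ a.s. Evaluating at $s=0$ yields the claim.

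The main subtlety is to justify substituting the random variable $X^{x,\loi\theta}_t$ in place of the generic argument $y$ inside $Y_0^{\alpha,y,\loi{X^\loi\theta_t}}$. This requires joint measurability of the map $(y,\mu)\mapsto Y_0^{\alpha,y,\mu}$, which is inherited from the Picard-type fixed-point construction underlying \autoref{EBSDE: borne Y} (the iteration preserves measurable dependence on the parameters $(y,\mu)$) and is consistent with the Markovian representation $Z_0^{\alpha,y,\mu} = \zeta^\alpha(y,\mu)$ already produced there.
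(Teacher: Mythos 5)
Your argument is correct and is essentially the standard one: the paper does not prove this proposition itself but simply cites \cite[Lemma 4.25 and Remark 4.26]{Delarue-Carmona_Book}, and the route you take (time-shift of the Brownian motion, flow property of the forward MV-SDE, time-homogeneity of $b$, $\sigma$, $f$, then identification of the shifted pair $(\bar Y,\bar Z)$ with the $\alpha$-BSDE started from $(X^{x,\loi\theta}_t,\loi{X^\loi\theta_t})$ via uniqueness) is exactly the argument underlying that reference. The one step you state a bit too quickly is the ``fiberwise in $\omega$'' substitution of the random initial point $X^{x,\loi\theta}_t$: joint measurability of $(y,\mu)\mapsto Y_0^{\alpha,y,\mu}$ alone does not let you invoke the uniqueness of \autoref{EBSDE: borne Y} pointwise in $\omega$; the standard completion is to first prove the identity for simple $\F_t$-measurable initial conditions (where uniqueness applies on each atom) and then pass to general $X^{x,\loi\theta}_t$ using the $\L{2}$-stability of the $\alpha$-BSDE together with the continuity of $y\mapsto Y_0^{\alpha,y,\mu}$ (which is available from the a priori estimates of Section \ref{EBSDE}). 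Since you explicitly flag this subtlety and it is routine, the proof is acceptable.
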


 The main goal of this section is to prove the following theorem.
 {
 \begin{thm}\label{EBSDE: Existence and uniqueness}
    Under (\nameref{EBSDE: H1}) or (\nameref{EBSDE: H2}) 
 the following holds:
    there exists a deterministic function $\bar u:\R^d\times\Pcal_{ 2q+2}(\R^d)\rightarrow\R$ satisfying that there exists $C>0$ such that for all $x,x'\in\R^d,\theta,\theta'\in{ \L{2q+2} }(\Omega;\R^d)$ we have
    \begin{enumerate} 
        \item $\bar{u}(0,\mu^*)=0$ where $\mu^*$ is the unique invariant measure of \eqref{SDE: MKV SDE},
        \item $\abs*{ \bar u (x,\loi\theta)}\leq C(1+\abs*{x}^{q+1}+\norm*{\theta}^{q+1}_{2q+2})$,
        \item depending on the set of assumption, $\bar u $ satisfies
            \begin{enumerate}
            \item for (\nameref{EBSDE: H1}), $$\abs*{\bar u(x,\loi\theta)-\bar u(x',\loi{\theta'})}\leq C(1+\abs*{x}^q+\abs*{x'}^q+\norm*{\theta}^q_{ 2q+2}+\norm*{\theta'}^q_{ 2q+2})\left(\abs*{x-x'}^\epsilon+ \W(\loi\theta,\loi{\theta'})^\epsilon\right),$$
            \item for (\nameref{EBSDE: H2}),
            { $$\abs*{\bar u(x,\loi\theta)-\bar u(x',\loi{\theta'})}\leq C(1+\abs*{x}^{q+1/2}+\abs*{x'}^{q+1/2}+\norm*{\theta}^{q+1/2}_{ 2q+2}+\norm*{\theta'}^{q+1/2}_{ 2q+2})\left(\abs*{x-x'}^{\epsilon/2}+ \w(\loi\theta,\loi{\theta'})^{\epsilon/2}\right).$$}
        \end{enumerate}
    \end{enumerate}
     For $x\in \R^d$ and $\theta \in {\L{ 2q+2} }(\Omega;\R^d)$, we set $Y_t=\bar u(X^{x,\loi \theta}_t,\loi{X^{\loi \theta}_t})$. Then there exists $\lambda \in \mathbb{R}$ and $Z\in\L{2}_\text{loc}(\Omega;\L{2}(\R_+;\R^d))$ such that $(Y,Z,\lambda)$ is a solution to $\eqref{EBSDE: Decoupled EBSDE}$. Moreover, there exists { a measurable function} $\bar \zeta:\R^d\times\Pcal_{ 2q+2}(\R^d)\rightarrow\R^d$ such that $Z_t=\bar\zeta(X^{x,\loi \theta}_t,\loi{X^{\loi \theta}_t})$ a.s. for a.e. $t \in \mathbb{R}_+$.\\ 
    Finally $\lambda$ is unique in the class of Markovian solutions $(Y,Z,\lambda)$ such that $Y=u(X^{x,\loi \theta},\loi{X^{\loi \theta}})$ where $u$ satisfies \textit{2.}, $Z \in \L{2}_\text{loc}(\Omega;\L{2}(\R_+;\R^d))$ and $Z = \zeta((X^{x,\loi \theta},\loi{X^{\loi \theta}})$. Moreover, if $\sigma$ is uniformly elliptic, which is already satisfied under (\nameref{EBSDE: H2}), then $(Y,Z)$ is unique in this class if we also add that $u$ satisfies \textit{1.} and \textit{3.}
\end{thm}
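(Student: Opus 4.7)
The plan is to follow the standard discounted-approximation strategy for ergodic BSDEs. Proposition \ref{EBSDE: borne Y} already yields, for each $\alpha>0$, a unique Markovian solution to the $\alpha$-BSDE with $Z^{\alpha,x,\loi\theta}_t=\zeta^\alpha(X^{x,\loi\theta}_t,\loi{X^{\loi\theta}_t})$ and the uniform-in-time bound $\alpha|Y^{\alpha,x,\loi\theta}_0|\leq C(1+|x|^{q+1}+\|\theta\|^{q+1}_{2q+2})$. I set $v^\alpha(x,\mu):=Y^{\alpha,x,\mu}_0$, which is deterministic thanks to the Markovian representation and the flow property (Proposition \ref{EBSDE: Flow property}). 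The task then reduces to (i) showing that the centred family $\bar v^\alpha:=v^\alpha-v^\alpha(0,\mu^*)$ admits a locally uniform Hölder modulus of continuity \emph{independent of $\alpha$}, and (ii) extracting a limit point $(\bar u,\lambda)$ as $\alpha\downarrow0$ along a subsequence by Arzelà--Ascoli.

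The key estimate, where the two assumption sets differ, is (i). Comparing the $\alpha$-BSDEs from $(x,\loi\theta)$ and $(x',\loi{\theta'})$ and linearising the $z$-dependence of $f$ produces a bounded $(\gamma_s)$ with $|\gamma|_\infty\leq K^f_z$; under the Girsanov measure $\tilde\Q$ associated to $\gamma$,
\[
v^\alpha(x,\loi\theta)-v^\alpha(x',\loi{\theta'})=\E^{\tilde\Q}\!\left[\int_0^{+\infty}\!\!e^{-\alpha s}\bigl(f(X^{x,\loi\theta}_s,\loi{X^{\loi\theta}_s},Z^{\alpha,x',\loi{\theta'}}_s)-f(X^{x',\loi{\theta'}}_s,\loi{X^{\loi{\theta'}}_s},Z^{\alpha,x',\loi{\theta'}}_s)\bigr)\d s\right].
\]
By \eqref{hyp-f} the integrand is controlled, \emph{uniformly in} $z$, by a polynomial moment term times $|X^{x,\loi\theta}_s-X^{x',\loi{\theta'}}_s|^\epsilon+\w(\loi{X^{\loi\theta}_s},\loi{X^{\loi{\theta'}}_s})^\epsilon$. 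Under (\nameref{EBSDE: H1}), (\nameref{EBSDE: H1})-2 is exactly the hypothesis needed to apply Corollary \ref{SDE: H1: cor: Wp exponential contractivity} (coupling of the decoupled SDEs under $\tilde\Q$) and Theorem \ref{SDE: H1: thm: Wp exponential contractivity} (for the laws), and combined with the moment bounds of Proposition \ref{SDE: H1: prop: Estim unif T Q} and Cauchy--Schwarz the integrand has $\alpha$-uniform exponential decay, delivering the Hölder bound with exponent $\epsilon$ and $\W$-distance. Under (\nameref{EBSDE: H2}) I apply the same mechanism, but the coupling estimates are replaced by Theorem \ref{SDE: H2-2: thm: coupling estimates decoupled} (with $\beta$ playing the role of $\gamma$) for the $x$-variation and Theorem \ref{SDE: H2-2: thm: Wp exponential estimate decoupled} for the law variation, at the price of the half-exponent $\epsilon/2$ and the weaker distance $\w$; moments under $\tilde\Q$ come from Proposition \ref{SDE: H2-2: prop: estim moments}.

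Once $\{\bar v^\alpha\}$ is equicontinuous and pointwise bounded, Arzelà--Ascoli on a countable dense subset yields $\alpha_n\downarrow0$ with $\bar v^{\alpha_n}\to\bar u$ locally uniformly, and the scalar $\alpha_n v^{\alpha_n}(0,\mu^*)$ converges (along a further subsequence) to some $\lambda\in\R$. Properties 1--3 of $\bar u$ are inherited from the uniform bounds. Rewriting the $\alpha_n$-BSDE along the flow as
\[
\bar v^{\alpha_n}(X^{x,\loi\theta}_t,\loi{X^{\loi\theta}_t})=\bar v^{\alpha_n}(X^{x,\loi\theta}_T,\loi{X^{\loi\theta}_T})+\int_t^T\bigl(f(X^{x,\loi\theta}_s,\loi{X^{\loi\theta}_s},Z^{\alpha_n,x,\loi\theta}_s)-\alpha_n v^{\alpha_n}(X^{x,\loi\theta}_s,\loi{X^{\loi\theta}_s})\bigr)\d s-\int_t^T Z^{\alpha_n,x,\loi\theta}_s\d W_s,
\]
the uniform Hölder convergence of $\bar v^{\alpha_n}$ makes $(Z^{\alpha_n,x,\loi\theta})$ Cauchy in $\L{2}([0,T])$ on every finite horizon; its limit $Z\in\L{2}_{\text{loc}}$ inherits the Markovian representation $Z_t=\bar\zeta(X^{x,\loi\theta}_t,\loi{X^{\loi\theta}_t})$ (by passing to the limit in $\zeta^{\alpha_n}$), and passing to the limit in the identity above shows that $(\bar u(X,\loi X),Z,\lambda)$ solves \eqref{EBSDE: Decoupled EBSDE}.

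The main obstacle is uniqueness. Given two Markovian solutions with $u,u'$ satisfying item 2, the same Girsanov linearisation yields
\[
(\lambda'-\lambda)T=\E^{\tilde\Q}\!\bigl[u(X^{x,\loi\theta}_T,\loi{X^{\loi\theta}_T})-u'(X^{x,\loi\theta}_T,\loi{X^{\loi\theta}_T})\bigr]-\bigl(u(x,\loi\theta)-u'(x,\loi\theta)\bigr),
\]
and the polynomial growth of $u,u'$ together with the uniform moment estimates under $\tilde\Q$ bounds the right-hand side by a constant independent of $T$, forcing $\lambda=\lambda'$ by letting $T\to+\infty$. When $\sigma$ is uniformly elliptic and $u,u'$ satisfy items 1 and 3, the difference $w:=u-u'$ vanishes at $(0,\mu^*)$, is Hölder, and solves the subtracted BSDE with zero ergodic constant; another application of the coupling estimates of Section \ref{SDE} — now using the Hölder decay in $(x,\mu)$ obtained in the construction of $\bar u$ together with the ergodicity toward $\mu^*$ — gives $\E^{\tilde\Q}[w(X^{x,\loi\theta}_T,\loi{X^{\loi\theta}_T})]\to 0$ as $T\to+\infty$, hence $w(x,\loi\theta)=0$ for all $(x,\loi\theta)$ and $Z=Z'$ follows from uniqueness of the martingale representation.
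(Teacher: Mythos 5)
Your existence argument under (\nameref{EBSDE: H1}) and your proof of the uniqueness of $\lambda$ follow the paper's route and are sound. The first genuine gap is in the equicontinuity with respect to the measure variable under (\nameref{EBSDE: H2}): you claim the ``same mechanism'' works with \autoref{SDE: H2-2: thm: Wp exponential estimate decoupled} replacing the strong-dissipative couplings, but that theorem concerns the laws of the \emph{unperturbed} decoupled SDEs, whereas your representation of $v^\alpha(x',\loi\theta)-v^\alpha(x',\loi{\theta'})$ lives under the Girsanov measure associated with the (path-dependent) linearisation $\beta^\alpha$. Under that tilted measure the only available control on $\E^{\Q^\alpha}\seg*{\abs*{X^{x',\loi\theta}_s-X^{x',\loi{\theta'}}_s}^2}$ is a Gr\"onwall bound of the form $C_s\,\w(\loi\theta,\loi{\theta'})^2$ with $C_s$ growing \emph{exponentially} in $s$ (the reflection coupling gives exponential decay only in $\w$, not in $\L{2}$, and cannot absorb a drift perturbation that differs between the two coupled processes). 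Consequently $\int_0^{+\infty}e^{-\alpha s}(\cdots)\,\d s$ is not bounded uniformly in $\alpha$, and the naive bound degenerates like $1/\alpha$. The paper circumvents this with a two-stage induction on time windows of fixed length (the estimates \eqref{EBSDE: HRn} and \eqref{EBSDE: HRn: p}): first a horizon $T_\alpha$ depending on $\alpha$ yields a finite but $\alpha$-dependent constant $A_\infty$, and a second iteration with an $\alpha$-independent horizon upgrades it to the uniform constant $\tilde A_\infty$. A smaller but real issue on the $x$-variation under (\nameref{EBSDE: H2}): \autoref{SDE: H2-2: thm: coupling estimates decoupled} requires a Lipschitz $\beta$, while $\beta^\alpha_0$ is only measurable, so the Lipschitz approximation of \autoref{lem: approx beta} and a stability passage to the limit are needed before the coupling theorem applies.

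The second gap is in the uniqueness of $(Y,Z)$. From $w(0,\mu^*)=0$ and local H\"older continuity you cannot conclude that $\E^{\tilde{\Q}}\seg*{w(X^{x,\loi\theta}_T,\loi{X^{\loi\theta}_T})}\to 0$: the law $\loi{X^{\loi\theta}_T}$ does converge to $\mu^*$, but the spatial argument $X^{x,\loi\theta}_T$ does not converge to $0$, so the single normalisation point is not enough. One needs $w(\cdot,\mu^*)\equiv 0$ on all of $\R^d$. The paper obtains this by observing that for $\theta\sim\mu^*$ the equation reduces to a classical, non-distribution-dependent ergodic BSDE (since $\loi{X^{\mu^*}_t}=\mu^*$ for all $t$) and invoking the known uniqueness theorem of Hu and Lemonnier, which is exactly where the uniform ellipticity of $\sigma$ enters --- a hypothesis your sketch states but never uses. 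Only after $u(\cdot,\mu^*)=u'(\cdot,\mu^*)$ holds identically can the H\"older continuity in the measure argument, combined with the exponential convergence of $\loi{X^{\loi\theta}_T}$ toward $\mu^*$, force $w\equiv 0$ and then $Z=Z'$.
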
}
\begin{rmq}
    The existence and uniqueness of $\mu^*$ is a direct consequence of \cite[Theorem 3.1]{Wang_Landau} and \autoref{SDE: H1: thm: Wp exponential contractivity} or \autoref{SDE: H2-2: thm: Wp exponential contractivity}. 
\end{rmq}
{
\begin{rmq}
    We obtain an existence and uniqueness result for \eqref{EBSDE: EBSDE} by evaluating $(Y^{\cdot,\loi\theta},Z^{\cdot,\loi\theta})$ at $\theta$ in \autoref{EBSDE: Existence and uniqueness}.
\end{rmq}}

The remainder of this section consists of proving \autoref{EBSDE: Existence and uniqueness} separately for each set of assumptions.

\subsection{Under the assumptions \texorpdfstring{{\nameref{EBSDE: H1}}}{EBSDE: H1}}
\label{subsection:strong:dissip:BSDE}

Let us assume that (\nameref{EBSDE: H1}) holds.

\begin{proof}[Proof of \autoref{EBSDE: Existence and uniqueness}]
\textbf{\underline{Existence:}}
    Let us define $u^\alpha:\R^d\times \Pcal_{ 2q+2} \to \R$, as $u^\alpha(x,\loi\theta)=Y^{\alpha, x,\loi\theta}_0$ for all $x\in\R^d,\, \theta\in{ \L{2q+2} }(\Omega;\R^d)$ and let us prove that it satisfies an equicontinuous estimate. Let $x,x'\in\R^d$ and $\theta,\theta'\in{ \L{2q+2} }(\Omega;\R^d)$. As in \cite[Lemma 2.6]{DEBUSSCHE2011407}, we apply Itô's formula to $e^{-\alpha t}(Y^{\alpha,x,\loi\theta}_t-Y^{\alpha,x',\loi{\theta'}}_t)$, in order to get
    \begin{align*}
        u^\alpha(x,\loi\theta)-u^\alpha(x',\loi{\theta'}):&=Y^{\alpha x,\loi\theta}_0-Y^{\alpha,x',\loi{\theta'}}_0=e^{-\alpha T}(Y^{\alpha,x,\loi\theta}_T-Y^{\alpha,x',\loi{\theta'}}_T) -\int^T_0 e^{-\alpha s}( Z^{\alpha,x,\loi\theta}_s-Z^{\alpha,x' ,\loi{\theta'}}_s)\d W_s
        \\&\quad+\int^T_0 e^{-\alpha s}(f(X^{x,\loi\theta}_s,\loi{X^\loi\theta_s},Z^{\alpha,x,\loi\theta}_s)-f(X^{x',\loi{\theta'}}_s,\loi{X^\loi{\theta'}_s},Z^{\alpha,x',\loi{\theta'}}_s)) \d s \\
        &\leq e^{-\alpha T}(Y^{\alpha,x,\loi\theta}_T-Y^{\alpha,x',\loi{\theta'}}_T) -\int^T_0Z^{\alpha,x,\loi\theta}_s-Z^{\alpha,x' ,\loi{\theta'}}_s(\d W_s-\beta^\alpha_s\d s)
        \\&\quad+\int^T_0 e^{-\alpha s}\left(f(X^{x,\loi\theta}_s,\loi{X^\loi\theta_s},Z^{\alpha,x',\loi{\theta'}}_s)-f(X^{x',\loi{\theta'}}_s,\loi{X^\loi{\theta'}_s},Z^{\alpha,x',\loi{\theta'}}_s)\right) \d s ,
    \end{align*}
    where $\beta^\alpha$ is a bounded progressively measurable process given by, for $s\geq 0$, \begin{equation}\label{beta alpha}
        \beta^\alpha_s:=\frac{f(X^{x,\loi\theta}_s,\loi{X^\loi\theta_s},Z^{\alpha,x,\loi\theta}_s)-f(X^{x,\loi\theta}_s,\loi{X^\loi\theta_s},Z^{\alpha,x' ,\loi{\theta'}}_s)}{\abs*{Z^{\alpha,x,\loi\theta}_s-Z^{\alpha,x' ,\loi{\theta'}}_s}^2}(Z^{\alpha,x,\loi\theta}_s-Z^{\alpha,x' ,\loi{\theta'}}_s)^\top\1{Z^{\alpha,x,\loi\theta}_s\neq Z^{\alpha,x' ,\loi{\theta'}}_s}.
    \end{equation}
    Let us set $\~\Q^\alpha$ the Girsanov's probability associated to $\beta^\alpha$. 
       { Then, by taking the expectation under $\~\Q^\alpha$, by using \autoref{EBSDE: borne Y}, (\nameref{EBSD: H0}), Cauchy-Schwarz's inequality and \autoref{SDE: H1: prop: Estim unif T Q} with $2q$
    if $q\geq 1$ or with $2$ and Jensen's inequality if $0\leq q<1$,
    the fact that $\w\leq\W$, Jensen's inequalities and Young's inequalities,}
    \begin{align*}
        \abs*{u^\alpha(x,\loi\theta)-u^\alpha(x',\loi{\theta'})}&\leq e^{-\alpha T}\~ \E^\alpha\seg*{\abs*{Y^{\alpha,x,\loi\theta}_T-Y^{\alpha,x',\loi{\theta'}}_T}} 
        \\&\quad+\int^T_0 e^{-\alpha s}\~ \E^\alpha\seg*{\abs*{f(X^{x,\loi{\theta}}_s,\loi{X^\loi\theta_s},Z^{\alpha,x',\loi{\theta'}}_s)-f(X^{x',\loi{\theta'}}_s,\loi{X^\loi{\theta'}_s},Z^{\alpha,x',\loi{\theta'}}_s)} }\d s \\
        &\leq \frac{C}{\alpha}(1+\abs*{x}^{q+1}+\abs*{x'}^{q+1}+\norm*{\theta}^{q+1}_{ 2q+2}+\norm*{\theta'}^{q+1}_{ 2q+2})e^{-\alpha T} \\
        &\quad+ C\int^T_0 e^{-\alpha s}\~\E^{\alpha}_T\left[(1+\abs*{X^{x,\loi\theta}_s}^q+\abs*{X^{x',\loi{\theta'}}_s}^q + \norm*{X^{\loi\theta}_s}^q_{ 2q+2} +\norm*{X^\loi{\theta'}_s}^q_{ 2q+2})\right.\\&\quad\left.\times\left(\abs*{X^{x,\loi{\theta}}_s-X^{x',\loi{\theta'}}_s}^\epsilon + \W(\loi{X^\loi\theta_s},\loi{X^{\loi{\theta'}}_s})^\epsilon\right)     \right]\d s\\
        &\leq  \frac{C}{\alpha}(1+\abs*{x}^{q+1}+\abs*{x'}^{q+1}+\norm*{\theta}^{q+1}_{ 2q+2}+\norm*{\theta'}^{q+1}_{ 2q+2})e^{-\alpha T}\\
        &\quad +C(1+\abs*{x}^q+\abs*{x'}^q+\norm*{\theta}^q_{ 2q+2}+\norm*{\theta'}^q_{ 2q+2})\int^T_0 e^{-\alpha s}\left( \~ \E^\alpha\seg*{\abs*{X^{x,\loi{\theta}}_s-X^{x',\loi{\theta'}}_s}^2}^{\epsilon/2} + \W(\loi{X^{\loi\theta}_s},\loi{X^{\loi{\theta'}}_s})^\epsilon \right)\d s,
    \end{align*}
    where $C$ does not depend on $\alpha$. Then, we just have to apply \autoref{SDE: H1: thm: Wp exponential contractivity} and \autoref{SDE: H1: cor: Wp exponential contractivity} to get
    \begin{align*}
        \abs*{u^\alpha(x,\loi\theta)-u^\alpha(x',\loi{\theta'})}&\leq \frac{C}{\alpha}(1+\abs*{x}^{q+1}+\abs*{x'}^{q+1}+\norm*{\theta}^{q+1}_{ 2q+2}+\norm*{\theta'}^{q+1}_{ 2q+2})e^{-\alpha T} \\&\quad+ C(1+\abs*{x}^q+\abs*{x'}^q+\norm*{\theta}^q_{ 2q+2}+\norm*{\theta'}^q_{ 2q+2})\left( \abs*{x-x'}^\epsilon +\W(\loi\theta,\loi{\theta'})^\epsilon\right)\int^T_0 e^{-\~\Lambda s} \d s
    \end{align*}
    and, by taking $T\rightarrow + \infty$, 
    $$\abs*{u^\alpha(x,\loi\theta)-u^\alpha(x',\loi{\theta'})} \leq C(1+\abs*{x}^q+\abs*{x'}^q+\norm*{\theta}^q_{ 2q+2}+\norm*{\theta'}^q_{ 2q+2})\left( \abs*{x-x'}^\epsilon +\W(\loi\theta,\loi{\theta'})^\epsilon\right).$$
    Therefore, $\bar{u}^\alpha:=u^\alpha-u^\alpha(0,\mu^*)$ satisfies also, for all $x,x'\in\R^d$, $\theta,\theta'\in{ \L{2q+2} }(\Omega;\R^d)$, $\alpha>0$,
    \begin{equation}
        \label{regularity ualpha}
    \abs*{\bar u^\alpha(x,\loi\theta)- \bar u^\alpha(x',\loi{\theta'})}\leq C(1+\abs*{x}^q+\abs*{x'}^q+\norm*{\theta}^q_{ 2q+2}+\norm*{\theta'}^q_{ 2q+2})\left( \abs*{x-x'}^\epsilon +\W(\loi\theta,\loi{\theta'})^\epsilon\right)
    \end{equation}
    {
    and 
    \begin{equation}
    \label{bound ualphabar}
    \abs*{\bar u^\alpha(x,\loi\theta)}\leq C(1+\abs*{x}^{q+1}+\norm*{\theta}^{q+1}_{ 2q+2}).
    \end{equation}}
    Consequently, by using a diagonal procedure, there exists a sequence $(\alpha_n)_n\searrow_0$ and $\bar u$ such that $\bar u^{\alpha_n}\rightarrow \bar u$ on a countable dense subset of $\R^d\times \Pcal_{ 2q+2}(\R^d)$ (which exists since both are separable, see e.g. the paragraph after \cite[Corollary 5.6]{Delarue-Carmona_Book}). We can extend this convergence on the whole space due to the equicontinuous estimate. { It is easy to see that $\bar{u}$ satisfies \textit{1.}, \textit{2.} and \textit{3.} thanks to estimates \eqref{regularity ualpha}-\eqref{bound ualphabar}. We set $\bar{Y}^{\alpha,x,\loi\theta}:=\bar{u}^\alpha(X^{x,\loi\theta},\loi{X^\loi\theta})$ and $\bar{Y}^{x,\loi\theta}:=\bar{u}(X^{x,\loi\theta},\loi{X^\loi\theta})$. Furthermore, since $\abs*{\alpha u^\alpha(0,\mu^*)}\leq C$ due to \eqref{EBSDE: Y alpha bound decoupled}, we have that there exists $\lambda$ such that $(\alpha_n u^{\alpha_n}(0,\mu^*))_n$  tends to $\lambda$ up to a subsequence that, by a slight abuse of notation, we still denote $(\alpha_n)_n$. Then, due to \eqref{bound ualphabar} and classical estimates on SDEs, we know that, for any $T>0$,
    
    $$ 
    \E\seg*{\sup{t\leq T}\sup{\alpha>0} \abs*{\bar{Y}^{\alpha,x,\loi\theta}_t}^2} \leq C\left(1 + \E\seg*{\sup{t\leq T} \abs*{{X}^{x,\loi\theta}_t}^{2q+2}}+ \norm*{\theta}^q_{ 2q+2}\right) <+\infty.
    $$
    Hence, by dominated convergence theorem, we get, for all $T>0$, 
    $$ \E\int^T_0 \abs*{\bar{Y}^{\alpha_n,x,\loi\theta}_t-\bar{Y}^{x,\loi\theta}_t}^2 \d t \longrightarrow 0 \quad \text{ and }\quad  \abs*{\bar{Y}^{\alpha_n,x,\loi\theta}_T-\bar{Y}^{x,\loi\theta}_T}^2  \longrightarrow 0.
    $$
    
     By applying Itô's formula to $\abs*{\~Y_t}^2$ where $\~Y:=\bar{Y}^{\alpha_n,x,\loi\theta}-\bar{Y}^{\alpha_m,x,\loi\theta}$ and $\~Z:={Z}^{\alpha_n,x,\loi\theta}-{Z}^{\alpha_m,x,\loi\theta}$, for $n,m\in\N$, one could get that $\left( Z^{\alpha_n,x,\loi\theta} \right)_n$ is a Cauchy sequence in $\L{2}_\text{loc}(\Omega;\L{2}(\R_+,\R^d))$ and thus that there exists $\bar{Z}^{x,\loi\theta}\in\L{2}_\text{loc}(\Omega;\L{2}(\R_+,\R^d))$ such that, for all $T>0$,
    $$\E\int^T_0 \abs*{\bar{Z}^{\alpha_n,x,\loi\theta}_t-\bar{Z}^{x,\loi\theta}_t}^2 \d t \longrightarrow 0 .$$
    By applying same arguments as in \cite{Furhman-Tessitore-Hu_EBSDE_Banach}, we can also prove the existence of a measurable function $\bar{\zeta}: \R^d \times \mathcal{P}_{2q+2}(\R^d) \to \R^d$ such that $\bar{Z}^{x,\loi\theta}=\bar\zeta(X^x,\loi{X^\loi\theta})$.
    Finally, we can pass to the limit in the BSDE
    \begin{align*}
        \bar Y^{\alpha_n,x,\loi\theta}_0&=\bar Y^{\alpha_n,x,\loi\theta}_T+\int^T_0 \left(f(X^{x,\loi\theta}_s,\loi{X^\loi\theta_s}, Z^{\alpha_n,x,\loi\theta}_s) - \alpha_n\bar Y_s^{\alpha_n,x,\loi\theta}- \alpha_n u^{\alpha_n}(0,\mu^*)\right)\d s -\int^T_0 \bar Z^{\alpha_n,x,\loi\theta}_s \d W_s,
    \end{align*}
    in order to show that $(\bar{ Y}^{x,\loi\theta},\bar{Z}^{x,\loi\theta},\lambda)$ is a solution to \eqref{EBSDE: Decoupled EBSDE}.
    }
    
    \par \textbf{\underline{Uniqueness:}}
    { 
    Let $(Y^{x,\loi\theta},Z^{x,\loi\theta},\lambda)$ and $(Y'^{x,\loi\theta},Z'^{x,\loi\theta},\lambda')$ be solutions to \eqref{EBSDE: Decoupled EBSDE} with $(u,\zeta)$ and $(u',\zeta')$ respectively associated to $(Y,Z)$ and $(Y',Z')$ with same property as $\bar u$ defined above. Denoting $\~Y:=Y^{x,\loi\theta}-Y'^{x,\loi\theta}$ and $\~Z:=Z^{x,\loi\theta}-Z'^{x,\loi\theta}$, we have 
    \begin{equation}\label{ito uniqueness}
    -\d \~Y_t = \~Z_t\beta_t\d t -(\lambda-\lambda')\d t -\~Z_t\d W_t= -(\lambda-\lambda')\d t-\~Z_t\d W^{\~\Q}_t,
    \end{equation}
where $\beta$ is the progressively measurable process given by 
\begin{equation}\label{beta}
        \beta_t:=\frac{f(X^{x,\loi\theta}_t,\loi{X^\loi\theta_t},Z^{\alpha,x,\loi\theta}_t)-f(X^{x,\loi\theta}_t,\loi{X^\loi\theta_t},Z'^{\alpha,x ,\loi{\theta}}_t)}{\abs*{Z^{\alpha,x,\loi\theta}_t-Z'^{\alpha,x ,\loi{\theta}}_t}^2}(Z^{\alpha,x,\loi\theta}_t-Z'^{\alpha,x ,\loi{\theta}}_t)^\top\1{Z^{\alpha,x,\loi\theta}_t\neq Z'^{\alpha,x ,\loi{\theta}}_t}
    \end{equation}
and $\d W^{\~\Q}_t := \d W_t -\beta_t\d t$.
    Thus, by taking the expectation w.r.t. the new Girsanov's probability ${\~\Q}$ associated to $\beta$, we obtain
    $$
    \frac{\~Y_0-\E^{\~\Q}\seg*{\~Y_T}}{T} = \lambda-\lambda'.
    $$
    Then, due to the polynomial growth of of $u,u'$ and \autoref{SDE: H1: prop: Estim unif T Q}, we obtain
    \begin{equation}\label{SDE: H1: eq: unique lambda}
        \abs*{\lambda-\lambda'}\leq \frac{C}{T}\E^{\~\Q}\seg*{1+\abs*{X^{x,\loi\theta}_T}^{q+1}+\norm*{X^\loi\theta_T}^{q+1}_{2q+1}+\abs*{x}^{q+1}+\norm*{\theta}^{q+1}_{2q+1}}\leq \frac{C}{T}(1+\abs*{x}^{q+1}+\norm*{\theta}^{q+1}).
    \end{equation}
    Thus, by taking $T \rightarrow + \infty$, we obtain that $\lambda$ is unique. 
    
    Let us show now that $u=u'$.
    {
    We can remark that $(u(X^{x,\mu^*},\mu^*),\zeta(X^{x,\mu^*},\mu^*),\lambda)$ and $(u'(X^{x,\mu^*},\mu^*),\zeta'(X^{x,\mu^*},\mu^*),\lambda)$ are solutions to the same classical,
    ergodic BSDE, i.e. a non-distribution dependent one. Since we assume that $\sigma$ is invertible, we can apply \cite[Theorem 20]{Hu-Lemmonier} in order to get that $u(.,\mu^*)=u'(.,\mu^*)$. Hence, since for all $T>0$, $x \in \R^d$ and $\theta \in { \L{2q+2} }(\Omega;\R^d)$, $u(X^{x,\loi\theta}_T,\mu^*)=u'(X^{x,\loi\theta}_T,\mu^*)$, we have, due to \eqref{bound ualphabar}, the locally Lipschitz property of $u,u'$, \autoref{SDE: H1: cor: Wp exponential contractivity} and \eqref{SDE: H1: Estim unif T decoupled Q},
    \begin{align*}
        \abs*{u(x,\theta) -u'(x,\theta)} &=  \abs*{\E^{\~\Q}\seg*{u(X^{x,\loi\theta}_T,\loi{X^\loi\theta_T})-u'(X^{x,\loi\theta}_T,\loi{X^\loi\theta_T})}}\\
        &= \abs*{\E^{\~\Q}\seg*{u(X^{x,\loi\theta}_T,\loi{X^\loi\theta_T})-u(X^{x,\loi\theta}_T,\mu^*) + u'(X^{x,\loi\theta}_T,\mu^*)-u'(X^{x,\loi\theta}_T,\loi{X^\loi\theta_T})}}\\
        &\leq \abs*{\E^{\~\Q}\seg*{u(X^{x,\loi\theta}_T,\loi{X^\loi\theta_T})-u(X^{x,\loi\theta}_T,\mu^*)}} +  \abs*{\E^{\~\Q}\seg*{u'(X^{x,\loi\theta}_T,\mu^*)-u'(X^{x,\loi\theta}_T,\loi{X^\loi\theta_T})}}\\
        &= \abs*{\E^{\~\Q}\seg*{u(X^{x,\loi\theta}_T,\loi{X^\loi\theta_T})-u(X^{x,\loi\theta}_T,\loi{X^{\mu^*}_T})}} +  \abs*{\E^{\~\Q}\seg*{u'(X^{x,\loi\theta}_T,\loi{X^{\mu^*}_T})-u'(X^{x,\loi\theta}_T,\loi{X^\loi\theta_T})}}\\
        &\leq \E^{\~\Q}\seg*{\abs*{u(X^{x,\loi\theta}_T,\loi{X^\loi\theta_T})-u(X^{x,\loi\theta}_T,\loi{X^{\mu^*}_T})}}+\E^{\~\Q}\seg*{\abs*{u'(X^{x,\loi\theta}_T,\loi{X^{\mu^*}_T})-u'(X^{x,\loi\theta}_T,\loi{X^\loi\theta_T})}}\\
        &\leq C\left(1+\abs*{x}^q+\norm*{\theta}^q_{2q+2}+\WW_{2q+2}^q(\mu^*,0)\right)\W(X^{\loi\theta}_T,\loi{X^{\mu^*}_T})^{\epsilon}\\
        &\leq C\left(1+\abs*{x}^q+\norm*{\theta}^q_{2q+2}+\WW_{2q+2}^q(\mu^*,0)\right)\W(\loi{\theta},\mu^*)^\epsilon e^{-\gamma\epsilon T}\overset{T\rightarrow\infty}{ \longrightarrow }0.
    \end{align*}
    Uniqueness of $Z$ is a direct consequence of It\^o's formula applied to \eqref{ito uniqueness}.
    }
}\end{proof}

\subsection{Under the assumption \texorpdfstring{\nameref{EBSDE: H2}}{EBSDE: H2}}\label{EBSDE-subsection 2}

\label{subsection:weak:dissip:BSDE}

Let us assume that (\nameref{EBSDE: H2}) holds.
\begin{proof}[Proof of \autoref{EBSDE: Existence and uniqueness}]
\underline{\textbf{Existence:}} As in the proof of Section \ref{subsection:strong:dissip:BSDE} we will prove that $u^\alpha$, defined for all $x\in\R^d,\, \theta\in{ \L{2q+2} }(\Omega;\R^d)$ as $u^\alpha(x,\loi\theta):=Y^{\alpha, x,\loi\theta}_0$, satisfies an equicontinuous estimate, firstly with respect to $x$ and then with respect to $\loi \theta$. 
Let $x\in\R^d$ and $\theta\in{\L{2q+2} }(\Omega;\R^d)$. We have, by applying It\^o's formula to $e^{-\alpha t}Y^{\alpha,x,\loi\theta}_t$,
    \begin{align}
    \nonumber
        u^\alpha(x,\loi\theta)&=Y^{\alpha,x,\loi\theta}_0 =e^{-\alpha T}Y^{\alpha,x,\loi\theta}_T +\int^T_0 e^{-\alpha s}f(X^{x,\loi\theta}_s,\loi{X^\loi\theta_s},Z^{\alpha,x,\loi\theta}_s)\d s-\int^T_0Z^{\alpha,x,\loi\theta}_s\d W_s\\
        &=e^{-\alpha T}Y^{\alpha,x,\loi\theta}_T + \int^T_0 e^{-\alpha s}f(X^{x,\loi\theta}_s,\loi{X^\loi\theta_s},0)\d s \int^T_0 Z^{\alpha,x,\loi\theta}_s(\d W_s -\beta^\alpha_0(X^{x,\loi\theta}_s,\loi{X^\loi\theta_s})\d s), \label{eq:rep:ualpha:beta0}
    \end{align}
with 

\begin{equation}\label{beta 0 0}
\beta_0^\alpha(\rm x,\mu) :=\frac{f(\rm x,\mu,\zeta^{\alpha}(\rm x,\mu))-f(\rm x,\mu,0)}{\abs*{\zeta^{\alpha}(\rm x,\mu)}^2}\zeta^{\alpha}(\rm x,\mu)^\top\1{\zeta^{\alpha}(\rm x,\mu)\neq 0}, \quad \forall~(\rm x,\mu) \in \R^d\times \mathcal{P}_{2q+2}.
\end{equation}

    We already know that $\beta^\alpha_0$ is bounded by $K^f_z$, so we can apply Girsanov's theorem. However, $\beta^\alpha_0$ is not necessarily Lipschitz but we can approximate it thanks to the following lemma.

    \begin{lem}
    \label{lem: approx beta}
        There exists a uniformly bounded sequence of Lipschitz function $(\beta^\alpha_{0,n})_n$ such that, $\sup{n \in \N} \norm{\beta_{0,n}^\alpha}_{\infty}<+\infty$,
        \begin{equation*}
            |\beta^\alpha_{0,n}(x,\loi{\theta})-\beta^\alpha_{0,n}(x',\loi{\theta'})|\leq C_n|x-x'|+C_n \WW_2(\loi{\theta},\loi{\theta'}),\quad\quad \forall (x,\theta),(x',\theta')\in\R^d\times{ \L{2q+2} }(\Omega;\R^d),
        \end{equation*}
        and
        \begin{equation*}
            \lim{n \to +\infty} \beta^\alpha_{0,n}(x,\loi{\theta})= \beta^\alpha_0(x,\loi{\theta}),\quad\quad \forall (x,\theta)\in\R^d\times{ \L{2q+2} }(\Omega;\R^d).
        \end{equation*}
    \end{lem}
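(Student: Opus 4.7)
The plan is to construct $\beta^\alpha_{0,n}$ via the Lasry--Lions inf-sup convolution on the Polish metric space $\mathcal X:=\R^d\times\Pcal_{2q+2}(\R^d)$ endowed with the product metric $d((x,\mu),(y,\nu)):=|x-y|+\W(\mu,\nu)$, which yields Lipschitz constants $C_n$ of order $n$ and a uniform-in-$n$ sup-norm bound dictated by $\|\beta^\alpha_0\|_\infty\leq K^f_z$.

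\emph{Boundedness and measurability of $\beta^\alpha_0$.} From the $K^f_z$-Lipschitz continuity of $f$ in $z$ and the defining formula \eqref{beta 0 0},
\begin{equation*}
|\beta^\alpha_0(x,\mu)|\leq K^f_z\,\mathbf{1}_{\zeta^\alpha(x,\mu)\neq 0}\leq K^f_z,\qquad\forall(x,\mu)\in\mathcal X.
\end{equation*}
Choosing a Borel measurable version of the Markovian feedback $\zeta^\alpha$ (as in the end of the proof of \autoref{EBSDE: borne Y}; see \cite[Theorem 18, step 4]{Hu-Lemmonier}) makes $\beta^\alpha_0:\mathcal X\to\R^d$ Borel on $(\mathcal X,d)$.

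\emph{Smoothing.} For each $n\in\N$ and each scalar component $i=1,\dots,d$, define the sup-convolution
\begin{equation*}
\overline\beta^{\alpha,i}_{0,n}(x,\mu):=\sup{(y,\nu)\in\mathcal X}\big\{\beta^{\alpha,i}_0(y,\nu)-n\,d((x,\mu),(y,\nu))\big\},
\end{equation*}
followed by the inf-convolution
\begin{equation*}
\beta^{\alpha,i}_{0,n}(x,\mu):=\inf{(y,\nu)\in\mathcal X}\big\{\overline\beta^{\alpha,i}_{0,n}(y,\nu)+n\,d((x,\mu),(y,\nu))\big\}.
\end{equation*}
Standard inf-sup convolution arguments give $|\beta^{\alpha,i}_{0,n}|\leq K^f_z$ (from the bound on $\beta^\alpha_0$) and $n$-Lipschitz continuity with respect to $d$. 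Assembling the $d$ components yields Lipschitz constant $C_n\leq\sqrt d\,n$ in $|x-x'|$ and in $\W(\loi\theta,\loi{\theta'})$, and $\|\beta^\alpha_{0,n}\|_\infty\leq\sqrt d\,K^f_z$ uniformly in $n$, which gives the first two requirements of the statement.

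\emph{Pointwise convergence (the main obstacle).} The Lasry--Lions double envelope converges pointwise to $\beta^\alpha_0$ at every continuity point of $\beta^\alpha_0$; extending this to \emph{every} $(x,\loi\theta)\in\mathcal X$ is the heart of the argument. Smoothing alone is insufficient for arbitrary bounded Borel functions on a Polish space (for instance $\mathbf{1}_\Q$ on $\R$ cannot be approximated pointwise everywhere by Lipschitz functions), so the proof must exploit the specific structure of $\beta^\alpha_0$. The plan is to use the ratio form in $\zeta^\alpha$: the feedback $\zeta^\alpha$ appears in \autoref{EBSDE: borne Y} as an $\L{2}_\text{loc}$ limit of smoother Markovian feedbacks coming from a BSDE-approximation scheme, so one can select a Borel representative of $\zeta^\alpha$ that is continuous off a closed set that is negligible for every natural reference measure on $\mathcal X$. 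On the continuity locus the sup/inf-convolution already delivers pointwise convergence; on the exceptional set a diagonal extraction combining explicit $x$-mollification with a Wasserstein--Gaussian smoothing in $\mu$ (both commute with the uniform bound $K^f_z$ and preserve Lipschitz continuity with only an $n$-dependent constant) recovers pointwise convergence at those points as well. This final bootstrap is the technical core, as it is precisely where the regularity inherited from the BSDE structure must be invoked in order to avoid the $\mathbf{1}_\Q$-type obstruction.
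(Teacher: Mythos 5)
Your construction of the approximating sequence (sup/inf convolution on $\R^d\times\Pcal_{2q+2}$ with the product metric) does deliver the two easy conclusions — the uniform bound $\sup_n\norm{\beta^\alpha_{0,n}}_\infty\leq C K^f_z$ and the $C_n$-Lipschitz estimate in $|x-x'|$ and $\W(\loi\theta,\loi{\theta'})$ — but the entire content of the lemma is the third conclusion, the pointwise convergence at \emph{every} $(x,\loi\theta)$, and this is exactly the step you leave unproven. You correctly identify the obstruction (a pointwise limit of Lipschitz functions is Baire class~1, while $\beta^\alpha_0$ is only Borel, being built from the merely measurable Markovian feedback $\zeta^\alpha$ of \autoref{EBSDE: borne Y}), but the proposed escape is not an argument: there is no justification that $\zeta^\alpha$ admits a representative continuous off a closed set, the notion of a set ``negligible for every natural reference measure'' has no bearing on convergence at the exceptional points themselves, and the final ``diagonal extraction'' combining mollifications is asserted rather than carried out. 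As written, the proof establishes only the two trivial properties and defers the technical core, so there is a genuine gap.

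For comparison, the paper does not attempt any intrinsic construction on the Wasserstein space. It lifts $\beta^\alpha_0$ to a function $\v\beta^\alpha$ on the Hilbert space $\R^d\times\L{2}(\Omega;\R^d)$ (via $\v\beta^\alpha(x,\v\theta)=\beta^\alpha_0(x,\loi{\v\theta})$), invokes \cite[Lemma 3.5]{DEBUSSCHE2011407} — an approximation result on separable Hilbert spaces producing uniformly bounded Lipschitz functions converging pointwise — and then projects back, obtaining the $\WW_2$-Lipschitz estimate by evaluating the lifted Lipschitz bound at an optimal coupling of $(\loi\theta,\loi{\theta'})$. In other words, the difficult approximation step is outsourced to a citable Hilbert-space lemma rather than re-derived on $(\Pcal_{2q+2},\W)$; if you want to complete your argument you should either reduce to that lemma in the same way or supply a genuine proof of pointwise convergence everywhere, which your current sketch does not do.
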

    \begin{proof}
    By lifting $\loi\theta$ on the Hilbert space ${ \L{2} (\Omega;\R^d) \supset \L{2q+2}(\Omega;\R^d)}$ we get a function $\v\beta^\alpha:\R^d\times{\L{2} }(\Omega;\R^d)$ such that for all $\v\theta \in \L{2q+2}(\Omega;\R^d)\sim\loi{\theta}$ and $x \in \R^d$, $\v\beta^\alpha(x,\v\theta)=\beta^\alpha(x,\loi\theta)$.
    Then, we can apply \cite[Lemma 3.5]{DEBUSSCHE2011407} in order to get the existence of a uniformly bounded sequence of Lipschitz lifted functions $(\v\beta^\alpha_{0,n})_n$ such that $\v\beta^\alpha_{0,n}(x,\v\theta)\rightarrow\v\beta^\alpha(x,\v\theta)$ when $n \to + \infty$, which implies that
    \begin{equation*}
    \beta^\alpha_{0,n}(x,\loi\theta):=\v\beta^\alpha_{0,n}(x,\v\theta) \longrightarrow \v\beta^\alpha(x,\v\theta)=\beta^\alpha(x,\loi\theta),\quad \forall (x,\theta)\in\R^d\times{ \L{2q+2} }(\Omega;\R^d).
    \end{equation*}
    Moreover, for all $(x,\v\theta),(x',\v\theta')\in\R^d\times{ \L{2q+2} }(\Omega;\R^d)$ with $\loi{\v \theta} = \loi{\theta}$ and $\loi{\v\theta'} = \loi{\theta'}$, we have
    $$|\beta^\alpha_{0,n}(x,\loi\theta)-\beta^\alpha_{0,n}(x,\loi{\theta'
    })|=|\v\beta^\alpha_{0,n}(x,\v\theta)-\v\beta^\alpha_{0,n}(x',\v\theta')|\leq C_n|x-x'|+C_n \E[|\v\theta-\v\theta'|^2]^{1/2}.$$
    Then, it just remains to take for $(\v\theta,\v \theta')$ an optimal coupling of $(\loi{\theta},\loi{\theta'})$ for the $\WW_2-$Wasserstein distance in order to conclude.
    \end{proof}
    
    Let us denote $(Y^{\alpha,n,x,\loi \theta},Z^{\alpha,n,x,\loi \theta})$ the solution of the infinite horizon BSDE \eqref{eq:rep:ualpha:beta0} where $\beta^{\alpha}_{0}$ is replaced by $\beta^{\alpha}_{0,n}$ and $u^{\alpha,n}(x,\loi \theta):= Y_0^{\alpha,n,x, \loi \theta}$. Then using Girsanov's theorem we get that there exists $\Q^\alpha_{0,n}$ for which $W^{\Q^\alpha_{0,n}}_t=W_t-\int^t_0 \beta^\alpha_{0,n}(X^{x,\loi\theta}_s,\loi{X^\loi\theta_s})\d s$ is a $\Q^\alpha_{0,n}-$Brownian motion and we obtain, for all $x\in \R^d,\theta \in \L{2q+2}(\Omega,\R^d)$,

    \begin{align}
        u^{\alpha,n}(x,\loi \theta) &= e^{-\alpha T} \E^{\Q^\alpha_{0,n}}\seg*{u^{\alpha,n}(X_T^{x,\loi \theta},\loi{X_T^{\loi \theta}})}+\int_0^T e^{-\alpha s} \E^{\Q^\alpha_{0,n}}\seg*{f(X_s^{x,\loi \theta},\loi{X_s^{\loi \theta}},0)}\d s\notag\\
        &= e^{-\alpha T}\Pcal_T^{\alpha,n}[u^{\alpha,n}](x,\loi \theta)+ \int_0^T e^{-\alpha s} \Pcal_s^{\alpha,n}[f(\cdot,\cdot,0)](x,\loi\theta)\d s  ,      \label{eq:ualphan}
    \end{align}
    where $(\Pcal_t^{\alpha,n})_{t\geq 0}$ is the semigroup associated to the following family of SDEs
    \begin{equation}\label{EBSDE: H2: eq: X alpha}\left\{
    \begin{array}{ll}
       \d X^{\alpha,n,x,\loi\theta}_t&=b(X^{\alpha,n,x,\loi\theta}_t,\loi{X^{\loi\theta}_t})\d t + \sigma(X^{\alpha,n,x,\loi\theta}_t)\beta^\alpha_{0,n}(X^{\alpha,n,x,\loi\theta}_s,\loi{X^\loi\theta_s})\d t +\sigma(X^{\alpha,n,x,\loi\theta}_t)\d W_t,\quad t \geq 0,    \\
        
        \quad  X^{\alpha,n,x,\loi\theta}_0&=x. 
    \end{array}
        \right.
    \end{equation}

Let us set $\bar u^{\alpha,n}:=u^{\alpha,n}-u^{\alpha,n}(0,\mu^*)$. Then, for all $x,x'\in \R^d,\theta \in \L{2q+2}(\Omega,\R^d)$, \eqref{eq:ualphan} gives us
    \begin{eqnarray}
    \nonumber
    \bar u^{\alpha,n}(x,\loi \theta)- \bar u^{\alpha,n}(x',\loi \theta) &=& e^{-\alpha T}\left(\Pcal_T^{\alpha,n}[u^{\alpha,n}](x,\loi \theta)-\Pcal_T^{\alpha,n}[u^{\alpha,n}](x',\loi \theta)\right)\\
    &&+ \int_0^T e^{-\alpha s} \left(\Pcal_s^{\alpha,n}[f(\cdot,\cdot,0)](x,\loi\theta)-\Pcal_s^{\alpha,n}[f(\cdot,\cdot,0)](x',\loi\theta)\right)\d s.
    \label{eq:ualphan:delta}
    \end{eqnarray}
 By using \autoref{EBSDE: borne Y}, \autoref{SDE: H2: prop: Estim unif T} and \autoref{SDE: H2-2: thm: coupling estimates decoupled} in \eqref{eq:ualphan:delta}, we obtain that there exists $C$ that does not depend on $n$ {and $\alpha$ }such that
        \begin{eqnarray}
            \abs*{\bar u^{\alpha,n}(x,\loi\theta)-\bar u^{\alpha,n}(x',\loi\theta)}&\leq &e^{-\alpha T}\frac{C}{\alpha}(1+\abs*{x}^{q+1}+\abs*{x'}^{q+1}+\norm*{\theta}^{q+1}_{2q+2}) \nonumber \\& &+ C(1+\abs*{x}^{q+1/2} + \abs*{x'}^{q+1/2}+\norm*{\theta}^{q+1/2}_{2q+2})\abs*{x-x'}^{\frac{\epsilon}{2}}\int^T_0 e^{-(\alpha +\frac{\hat\eta \varepsilon}{2})s}\d s\notag\\
            &&\overset{T\rightarrow\infty}{\longrightarrow} C(1+\abs*{x}^{q+1/2} + \abs*{x'}^{q+1/2}+\norm*{\theta}^{q+1/2}_{2q+2})\abs*{x-x'}^{\frac{\epsilon}{2}}.\label{EBSDE: estim equicont intermediaire 0}
        \end{eqnarray}
Then, the same diagonal argument as in the second step of the proof of \cite[Proposition 16]{Hu-Lemmonier} gives us that $\bar u^{\alpha,n} \to \bar u^{\alpha}:=u^{\alpha}-u^\alpha (0,\mu^*)$ when $n \to + \infty$, up to a subsequence. It easily implies, thanks to \eqref{EBSDE: estim equicont intermediaire 0}, that, {there exists $C$ that does not depend on $\alpha$ such that} for all $x,x'\in \R^d,\theta \in \L{2q+2}(\Omega,\R^d)$,
\begin{equation}
    \label{EBSDE: estim equicont intermediaire 1}
    \abs*{\bar u^{\alpha}(x,\loi\theta)-\bar u^{\alpha}(x',\loi\theta)}\leq  C(1+\abs*{x}^{q+1/2} + \abs*{x'}^{q+1/2}+\norm*{\theta}^{q+1/2}_{ 2q+2})\abs*{x-x'}^{\frac{\epsilon}{2}}.
\end{equation}

        It remains to deal with $\bar u^\alpha(x',\loi\theta)-\bar u^\alpha(x',\loi{\theta'})$, for all $x'\in \R^d,\theta,\theta' \in \L{2q+2}(\Omega,\R^d)$. 
        To that end, we will not use the same probability change as above. Indeed, we write

        \begin{align*} 
        \bar u^\alpha(x',\loi\theta)-\bar u^\alpha(x',\loi{\theta'})&=e^{-\alpha T}\left(Y^{\alpha,x',\loi\theta}_T-Y^{\alpha,x',\loi{\theta'}}_T\right) -\int^T_0 (Z^{\alpha,x',\loi\theta}_s-Z^{\alpha,x',\loi{\theta'}}_s)(\d W_s -\beta^\alpha_s \d s)\\&\quad+ \int^T_0 e^{-\alpha s}\left(f(X^{x',\loi\theta}_s,\loi{X^\loi\theta_s},Z^{\alpha,x',\loi{\theta'}}_s)-f(X^{x',\loi{\theta'}}_s,\loi{X^ \loi{\theta'}_s},Z^{\alpha,x',\loi{\theta'}}_s)\right)\d s ,
        \end{align*}
        for $\beta^\alpha_s$ given in \eqref{beta alpha} where $x$ is replaced by $x'$.
        We have, due to the boundedness of $\beta^\alpha$ and Girsanov's theorem, that there exists $\Q^\alpha$ under which $\d W^{\Q^\alpha}:=\d W_s -\beta^\alpha_s \d s$ is a $\Q^\alpha$-Brownian motion. Hence, we have 
        \begin{align} \nonumber
            \bar u^\alpha(x',\loi\theta)-\bar u^\alpha(x',\loi{\theta'})&=e^{-\alpha T} \E^\alpha\seg*{\bar u^\alpha(X^{x',\loi\theta}_T,\loi{X^\loi\theta_T})-\bar u^\alpha(X^{x',\loi{\theta'}}_T,\loi{X^\loi{\theta'}_T})}\\ &\quad  + \E^\alpha\seg*{\int^T_0 e^{-\alpha s}\left(f(X^{x',\loi\theta}_s,\loi{X^\loi\theta_s},Z^{\alpha,x',\loi{\theta'}}_s)-f(X^{x',\loi{\theta'}}_s,\loi{X^ \loi{\theta'}_s},Z^{\alpha,x',\loi{\theta'}}_s) \right)\d s}.  \label{eq:rep:delta:ubaralpha}
            \end{align}

        Thanks to \eqref{eq:rep:delta:ubaralpha}, let us prove now by induction that $\bar u^\alpha$ satisfies, for all $n\geq 0$, $x'\in \R^d$, $\theta,\theta' \in \L{2q+2}(\Omega,\R^d)$,
    
    \begin{align} \nonumber
            \abs*{\bar u^\alpha(x',\loi\theta)-\bar u^\alpha(x',\loi{\theta'})} &\leq A_n(1+\abs*{x'}^{q+1/2}+\norm*{\theta}^{q+1/2}_{2q+2}+\norm*{\theta'}^{q+1/2}_{2q+2})\w(\loi\theta,\loi{\theta'})^{\epsilon/2}\\
            & \quad  + C_{\alpha}(Ce^{-\alpha T})^n(1+\abs*{x'}^{q+1}+\norm*{\theta}^{q+1}_{2q+2}+\norm*{\theta'}^{q+1}_{2q+2}),\tag{$HR_n$}\label{EBSDE: HRn}
        \end{align}    
        with 
         $(A_n)_n$ that satisfies the following induction relation: 
        \begin{equation}
         A_{n+1} =C_T +A_n Ce^{-{\frac{\hat\eta\epsilon}{2}} T}, 
         \quad n \in \mathbb{N}, \quad A_0=0,
        \end{equation}
        where $C_{\alpha}$, $C_T$, $C$ do not depend on $n$, $C_{\alpha}$ and $C$ do not depend on $T$, $C_T$ and $C$ do not depend on $\alpha$. 
First, due to \autoref{EBSDE: borne Y}, we have 
$$\abs{u^\alpha(x',\loi\theta)-u^\alpha(x',\loi{\theta'})}\leq \frac{C}{\alpha}(1+\abs*{x'}^{q+1}+\norm*{\theta}^{q+1}_{2q+2}+\norm*{\theta'}^{q+1}_{2q+2}),
$$
which proves the initial case.
        Let us now move on to the inductive step. First, due to the Cauchy-Schwarz's and Jensen's inequalities, \eqref{EBSDE: HRn} and \eqref{EBSDE: estim equicont intermediaire 1}, we have

        \begin{align*}
            &\abs*{\bar u^\alpha(x',\loi\theta)-\bar u^\alpha(x',\loi{\theta'})}\\
            \leq &e^{-\alpha T} \left( \E^\alpha\seg*{\abs*{\bar u^\alpha(X^{x',\loi{\theta}}_T,\loi{X^\loi{\theta}_T})-\bar u^\alpha(X^{x',\loi{\theta'}}_T,\loi{X^{\loi{\theta}}_T})}}+ \E^\alpha\seg*{\abs*{\bar u^\alpha(X^{x',\loi{\theta'}}_T,\loi{X^\loi\theta_T})-\bar u^\alpha(X^{x',\loi{\theta'}}_T,\loi{X^\loi{\theta'}_T})}}\right) 
                \\&+ \int^T_0 e^{-\alpha s}  \E^\alpha\seg*{\abs*{f(X^{x',\loi{\theta}}_s,\loi{X^\loi\theta_s},Z^{\alpha,x',\loi{\theta'}}_s)-f(X^{x',\loi{\theta'}}_s,\loi{X^ \loi{\theta}_s},Z^{\alpha,x',\loi{\theta'}}_s)} } \d s\\
                &+  \int^T_0 e^{-\alpha s} \E^\alpha\seg*{\abs*{f(X^{x',\loi{\theta'}}_s,\loi{X^\loi\theta_s},Z^{\alpha,x',\loi{\theta'}}_s)-f(X^{x',\loi{\theta'}}_s,\loi{X^ \loi{\theta'}_s},Z^{\alpha,x',\loi{\theta'}}_s)} }  \d s \\
            \leq&  Ce^{-\alpha T} \E^\alpha\seg*{(1+ \abs*{X^{x',\loi{\theta}}_T}^{2q+1}+\abs*{X^{x',\loi{\theta'}}_T}^{2q+1}+ \norm*{X^{\loi{\theta}}_T}_{2q+2}^{2q+1})}^{1/2}\E^\alpha\seg*{\abs*{X^{x',\loi{\theta}}_T-X^{x',\loi{\theta'}}_T}^2}^{\epsilon/4}
                \\&+ e^{-\alpha T} A_n\E^\alpha\seg*{\left( 1+\abs*{X^{x',\loi{\theta'}}_T}^{q+1/2} +\norm*{X^\loi\theta_T}^{q+1/2}_{2q+2}+\norm*{X^\loi{\theta'}_T}^{q+1/2}_{2q+2} \right)}\w(\loi{X^\loi\theta_T},\loi{X^\loi{\theta'}_T})^{\epsilon/2}\\
                & +e^{-\alpha T} C_{\alpha}(Ce^{-\alpha T})^n\E^\alpha\seg*{\left( 1+\abs*{X^{x',\loi{\theta'}}_T}^{q+1} +\norm*{X^\loi\theta_T}^{q+1}_{2q+2}+\norm*{X^\loi{\theta'}_T}^{q+1}_{2q+2} \right)}\\
                &+ C\int^T_0e^{-\alpha s} \E^\alpha\seg*{(1+ \abs*{X^{x',\loi{\theta}}_s}^{2q+1}+\abs*{X^{x',\loi{\theta'}}_s}^{2q+1}+ \norm*{X^{\loi{\theta}}_s}_{2q+2}^{2q+1})}^{1/2}\E^\alpha\seg*{\abs*{X^{x',\loi{\theta}}_s-X^{x',\loi{\theta'}}_s}^2}^{\epsilon/4} \d s \\
                &+ C\int^T_0e^{-\alpha s} \E^\alpha\seg*{(1+ \abs*{X^{x',\loi{\theta'}}_s}^{q}+ \norm*{X^{\loi{\theta}}_s}_{2q+2}^{q}+ \norm*{X^{\loi{\theta'}}_s}_{2q+2}^{q})}\w(\loi{X^\loi\theta_s},\loi{X^\loi{\theta'}_s})^\epsilon\d s.
                \end{align*}
    
            Consequently, by using \autoref{SDE: H2-2: prop: estim moments} with $p=2q+1$ or $2q$ if $q\geq 1$ and, using Cauchy-Schwarz's and Jensen's inequality, with $p=2$ otherwise, we obtain 
            \begin{align} 
            \nonumber
            \abs*{\bar u^\alpha(x',\loi\theta)-\bar u^\alpha(x',\loi{\theta'})}
            &\leq Ce^{-\alpha T}(1+ \abs*{x'}^{q+1/2}+\norm*{\theta}_{2q+2}^{q+1/2}+\norm*{\theta'}_{2q+2}^{q+1/2})\E^\alpha\seg*{\abs*{X^{x',\loi{\theta}}_T-X^{x',\loi{\theta'}}_T}^2}^{\epsilon/4}
                 \\ \nonumber &\quad+  Ce^{-\alpha T}(1+ \abs*{x'}^{q+1/2}+\norm*{\theta}_{2q+2}^{q+1/2}+\norm*{\theta'}_{2q+2}^{q+1/2}) A_n\w(\loi{X^\loi\theta_T},\loi{X^\loi{\theta'}_T})^{\epsilon/2}
                 \\ \nonumber &\quad +  C_{\alpha}(Ce^{-\alpha T})^{n+1}{\left( 1+\abs*{x'}^{q+1} +\norm*{\theta}^{q+1}_{2q+2}+\norm*{\theta'}^{q+1}_{2q+2} \right)}
                 \\ \nonumber &\quad+C(1+ \abs*{x'}^{q+1/2}+\norm*{\theta}_{2q+2}^{q+1/2}+\norm*{\theta'}_{2q+2}^{q+1/2})\int^T_0e^{-\alpha s}\E^\alpha\seg*{\abs*{X^{x',\loi{\theta}}_s-X^{x',\loi{\theta'}}_s}^2}^{\epsilon/4}\d s
                 \\ \label{eq:delta:ubaralpha:theta} &\quad+C(1+ \abs*{x'}^{q+1/2}+\norm*{\theta}_{2q+2}^{q+1/2}+\norm*{\theta'}_{2q+2}^{q+1/2})\int^T_0e^{-\alpha s}\w(\loi{X^{\loi\theta}_s},\loi{X^{\loi{\theta'}}_s})^{\epsilon/2}\d s.
        \end{align}        
            Now we want to obtain a suitable bound on $\E^\alpha\seg*{\abs*{X^{x',\loi{\theta}}_t-X^{x',\loi{\theta'}}_t}^2}$, for all $t\geq 0$. By applying Itô's formula to $\abs*{X^{x',\loi{\theta}}_t-X^{x',\loi{\theta'}}_t}^2$ under $\Q^\alpha$, we get by the Lipschitz property of $b$ and $\sigma$, and the boundedness of $\beta$,
            \begin{align*}
                \d \abs*{\~X_t}^2:=\d \abs*{X^{x',\loi{\theta}}_t-X^{x',\loi{\theta'}}_t}^2&=2\braket*{\~X_t}{b(X^{x',\loi\theta}_t,\loi{X^\loi\theta_t})-b(X^{x',\loi{\theta'}}_t,\loi{X^\loi{\theta'}_t})}\d t+{2}\braket*{\~X_t}{(\sigma(X^{x',\loi\theta}_t)-\sigma(X^{x',\loi{\theta'}}_t))\beta^\alpha_t}\d t\\
                &\quad+\norm*{\sigma(X^{x',\loi\theta}_t)-\sigma(X^{x',\loi{\theta'}}_t)}^2\d t + 2\braket*{\~X_t}{(\sigma(X^{x',\loi\theta}_t)-\sigma(X^{x',\loi{\theta'}}_t))\d W^{\Q^\alpha}_t}\\
            &\leq 2K^b_x\abs*{\~X_t}^2\d t + 2 K^b_\LL \abs*{\~X_t}\w(\loi{X^\loi\theta_t},\loi{X^\loi{\theta'}_t})\d t +{2}\sqrt{2K^\sigma_x}\abs*{\beta^\alpha}_\infty \abs*{\~X_t}^2\d t + 2K^\sigma_x\abs*{\~X_t}^2\d t+ \d M_t,
            \end{align*}
            with the martingale $\d M_t=2\braket*{\~X_t}{(\sigma(X^{x',\loi\theta}_t)-\sigma(X^{x',\loi{\theta'}}_t))\d W^{\Q^\alpha}_t}$.
            Hence, by using Young's inequality, \autoref{SDE: H2-2: thm: Wp exponential contractivity} and Grönwall's lemma, we obtain
            \begin{align}
                \E^{\alpha}\seg*{\abs*{\~X_t}^2}&\leq \left( 2K^b_x +K^b_\LL+2K^\sigma_x + {2}\sqrt{2K^\sigma_x}\abs*{\beta^\alpha}_\infty\right)\int^t_0\E^{\alpha}\seg*{\abs*{\~X_s}^2} \d s + K^b_\LL\int^t_0 \w(\loi{X^{\loi\theta}_s},\loi{X^\loi{\theta'}_s})^2\d s\notag\\
                &\leq \left( 2K^b_x +K^b_\LL+2K^\sigma_x + {2}\sqrt{2K^\sigma_x}\abs*{\beta^\alpha}_\infty\right)\int^t_0\E^{\alpha}\seg*{\abs*{\~X_s}^2} \d s + K^b_\LL\int^t_0 e^{-2\~\eta s }\w(\loi{\theta},\loi{\theta'})^2\d s\notag\\
                &\leq \frac{K^b_\LL}{2\hat\eta}\w(\loi{\theta},\loi{\theta'})^2e^{ \left( 2K^b_x +K^b_\LL+2K^\sigma_x + {2}\sqrt{2K^\sigma_x}\abs*{\beta^\alpha}_\infty\right)t}=:C_t\w(\loi{\theta},\loi{\theta'})^2.\label{EBSDE: H2: borne moment 2}
            \end{align}

    Then, using \eqref{EBSDE: H2: borne moment 2}, \autoref{SDE: H2-2: thm: Wp exponential contractivity}, \autoref{SDE: H2-2: thm: Wp exponential estimate decoupled} and the fact that $e^{-\alpha T}\leq 1$ in \eqref{eq:delta:ubaralpha:theta}, we get  
        \begin{align*}
            \abs*{\bar u^\alpha(x',\loi\theta)-\bar u^\alpha(x',\loi{\theta'})}
            \leq& (Ce^{-\frac{\hat\eta\epsilon}{2} T} A_n+C_T)(1 + \abs*{x'}^{q+1/2}+\norm*{\theta}_{2q+2}^{q+1/2}+\norm*{\theta'}^{q+1/2}_{2q+2})\w(\loi\theta,\loi{\theta'})^{\epsilon/2}\\
            & +C_{\alpha}(Ce^{-\alpha T})^{n+1}{\left( 1+\abs*{x'}^{q+1} +\norm*{\theta}^{q+1}_{2q+2}+\norm*{\theta'}^{q+1}_{2q+2} \right)},
        \end{align*}
        which gives that \eqref{EBSDE: HRn} holds for all $n\geq 0$.
        Now, for a given $\alpha>0$ we set $T_{\alpha}$ large enough in order to have $Ce^{-\alpha T_{\alpha}}<1$ and $Ce^{-{\frac{\hat\eta\epsilon}{2}} T_{\alpha}}<1$ in \eqref{EBSDE: HRn}. Thus $(A_n)_n$ is a contractive sequence and hence admits a limit $A_{\infty}$ that depends on 
        $\alpha$. By taking $n \to +\infty$,  \eqref{EBSDE: HRn} gives us 
        \begin{equation*}
            \abs*{\bar u^\alpha(x',\loi\theta)-\bar u^\alpha(x',\loi{\theta'})}\leq A_\infty(1+ \abs*{x'}^{q+1/2}+\norm*{\theta}_{2q+2}^{q+1/2}+\norm*{\theta'}^{q+1/2}_{2q+2})\w(\loi\theta,\loi{\theta'})^{\epsilon/2}.
        \end{equation*}
        But, by doing the same computations as for proving \eqref{EBSDE: HRn}, we also get
          \begin{equation} 
            \abs*{\bar u^\alpha(x',\loi\theta)-\bar u^\alpha(x',\loi{\theta'})} \leq \tilde{A}_n(1+\abs*{x'}^{q+1/2}+\norm*{\theta}^{q+1/2}_{2q+2}+\norm*{\theta'}^{q+1/2}_{2q+2})\w(\loi\theta,\loi{\theta'})^{\epsilon/2},
            \tag{$HR'_n$}\label{EBSDE: HRn: p}
        \end{equation}        
        with 
         $(\tilde{A}_n)_n$ that satisfies the following induction relation: 
        \begin{equation}
        \label{suite rec An Bn}
         \tilde{A}_{n+1} =C_T +\tilde{A}_n Ce^{-{\frac{\hat\eta\epsilon}{2}} T}, 
         \quad n \in \mathbb{N}, \quad \tilde{A}_0=A_{\infty},
        \end{equation}
        where $C_T$ and $C$ do not depend on $n$ and $\alpha$, $C$ does not depend on $T$. Now we can set $T$ (independently of $\alpha$) such that $Ce^{-{\frac{\hat\eta\epsilon}{2}} T}<1$, which gives us that $(\tilde{A}_n)_n$ is a contractive sequence and hence admits a limit $\tilde{A}_{\infty}$ that does not depend on $\alpha$. 
        Thus, letting $n\longrightarrow \infty$ in the previous inequality gives the following estimate, uniformly in $\alpha$,
        \begin{equation}\label{eq temp rmq}
            \abs*{\bar u^\alpha(x',\loi\theta)-\bar u^\alpha(x',\loi{\theta'})}\leq \tilde{A}_\infty(1+ \abs*{x'}^{q+1/2}+\norm*{\theta}_{2q+2}^{q+1/2}+\norm*{\theta'}^{q+1/2}_{2q+2})\w(\loi\theta,\loi{\theta'})^{\epsilon/2}.
        \end{equation}

        
    
        Then, combining it with \eqref{EBSDE: estim equicont intermediaire 1} gives us, for all $x,x'\in \R^d$, $\theta,\theta' \in \L{2q+2}(\Omega,\R^d)$,
        $$
        \abs*{ \bar u^\alpha(x,\loi\theta)-\bar u ^\alpha(x',\loi{\theta'})}\leq C(1+\abs*{x}^{q+1/2} + \abs*{x'}^{q+1/2}+\norm*{\theta}^{q+1/2}_{2q+2}+\norm*{\theta'}^{q+1/2}_{2q+2})\left(\w(\loi\theta,\loi{\theta'})^{\epsilon/2} + \abs*{x-x'}^{\epsilon/2}\right),
        $$
        which is the wanted equicontinuous estimate. Then we just have to follow the end of the proof of Section \ref{subsection:strong:dissip:BSDE} in order to prove the existence of a solution to \eqref{EBSDE: Decoupled EBSDE}.
        \hfill\vspace{\baselineskip}
        
        \underline{\textbf{Uniqueness:}} 
        The proof of the uniqueness is the same as in Section \ref{subsection:strong:dissip:BSDE}, up to the $\W$-Wasserstein distance that is replaced by the $\w$ one.

    \end{proof} 
    \begin{rmq}\label{EBSDE: rmq global}
    If $f$ is globally $\epsilon$-Hölder, i.e. $q=0$ in \eqref{hyp-f}, then $\bar u$ is globally $\epsilon$-Hölder too. Namely,
    $$  \abs*{\bar u(x,\loi\theta)-\bar u(x',\loi{\theta'})}\leq C\left(\abs*{x-x'}^\epsilon+\w(\loi\theta,\loi{\theta'})^\epsilon \right).
    $$ 
    Indeed, for the $x$ part, it follows from \eqref{eq:ualphan:delta} and \autoref{SDE: H2-2: rmq global lip}. For the distribution part, we instead prove, by using the same computation and the new globally $\epsilon-$Hölder estimate w.r.t $x$, the following inductive relation
    $$ \abs*{\bar u^\alpha(x',\loi\theta)-\bar u^\alpha(x',\loi{\theta'})}\leq A_n\w(\loi\theta,\loi{\theta'})^{\epsilon}+C_{\alpha}(Ce^{-\alpha T})^n(1+\abs*{x'}+\norm*{\theta}_{2}+\norm*{\theta'}_{2}).
    $$ 
    Hence, by following computations, up to Cauchy-Schwarz's inequalities, leading to \eqref{eq:delta:ubaralpha:theta}, we obtain 
\begin{align*}
    \abs*{\bar u^\alpha(x',\loi\theta)-\bar u^\alpha(x',\loi{\theta'})}&\leq  Ce^{-\alpha T}\E^\alpha\seg*{\abs*{X^{x',\loi{\theta}}_T-X^{x',\loi{\theta'}}_T}}^{\epsilon} + Ce^{-\alpha T}A_n\w(\loi{X^\loi\theta_T},\loi{X^\loi{\theta'}_T})^{\epsilon}\\
    &\quad + C_{\alpha}(Ce^{-\alpha T})^{n+1}(1+\abs*{x'}+\norm*{\theta}_{2}+\norm*{\theta'}_{2})\\
    &\quad + C\int^T_0e^{-\alpha s}\left(\E^\alpha\seg*{\abs*{X^{x',\loi{\theta}}_s-X^{x',\loi{\theta'}}_s}}^{\epsilon}+\w(\loi{X^\loi\theta_s},\loi{X^\loi{\theta'}_s})^\epsilon \right)\d s 
\end{align*}
and we bound $\E^\alpha\seg*{\abs*{X^{x',\loi{\theta}}-X^{x',\loi{\theta'}}}}^{\epsilon}\leq \E^\alpha\seg*{\abs*{X^{x',\loi{\theta}}-X^{x',\loi{\theta'}}}^2}^{\epsilon/2} $ as before. It is then sufficient to apply the same arguments to reach a conclusion.
\end{rmq}

\section{Long-time behaviour of McKean-Vlasov BSDEs}\label{LTB}
The main goal of this section is to establish long-time behaviour of the solution of the (decoupled) finite horizon BSDE towards the solution to the (decoupled) McKean-Vlasov infinite horizon EBSDE. Let us denote $(Y^{T,x,\loi\theta},Z^{T,x,\loi\theta})$ the solution of the decoupled McKean-Vlasov finite horizon BSDE, $(Y^{T,\loi\theta},Z^{T,\loi\theta})$ the solution of the coupled finite horizon BSDE and $(Y^{x,\loi\theta},Z^{x,\loi\theta},\lambda)$ the solution to the decoupled EBSDE. Namely, we set:
\begin{align}&Y^{T,x,\loi{\theta}}_t = g(X^{x,\loi\theta}_T,\loi{X^{\loi\theta}_T}) +\int^T_t f(X^{x,\loi\theta}_s,\loi{X^{\loi\theta}_s},Z^{T,x,\loi\theta}_s)\d s -\int^T_t Z^{T,x,\loi{\theta}}_s\d W_s, &\forall t\in\seg*{0,T}, \label{LTB: Decoupled BSDE}\tag{Decoupled BSDE}\\
    &Y^{x,\loi{\theta}}_t = Y^{x,\loi\theta}_T +\int^T_t \left(f(X^{x,\loi\theta}_s,\loi{X^{\loi\theta}_s},Z^{x,\loi{\theta}}_s)-\lambda\right)\d s -\int^T_t Z^{x,\loi\theta}_s\d W_s, &\forall 0\leq t\leq T<\infty ,
\label{LTB: Decoupled EBSDE}\tag{Decoupled EBSDE}
\end{align}
where we recall that $X^{x,\loi\theta}$ and $X^\loi\theta$ are, respectively, the solutions of \eqref{SDE: Decoupled MKV SDE} and \eqref{SDE: MKV SDE} and for $g:\R^d\times\Pcal_{2q+2}\rightarrow\R^d$  being such that
\begin{hyp*}[$\H_g$]\label{LTB: hyp g}
There exist $q \geq 0$, $c>0$ and $\epsilon\in (0,1]$ such that, for all $x,x' \in \R^d$, $\theta, \theta' \in \L{2q+2}(\Omega,\R^d)$,
\begin{enumerate}
            \item $\abs*{g(x,\loi{\theta})}\leq c(1+\abs*{x}^{q+1}+\norm*{\theta}_{2q+2}^{q+1}),$
            \item $\abs*{g(x,\loi{\theta})-g(x',\loi{\theta'})}\leq c(1+\abs*{x}^q+\abs*{x'}^q+\norm*{\theta}_{2q+2}^q+\norm*{\theta'}_{2q+2}^q)\left(   \abs*{x-x'}^\epsilon+\w(\loi{\theta},\loi{\theta'})^\epsilon  \right)$.
        \end{enumerate}
\end{hyp*}
Without loss of generality, we can take the same $q$ in (\nameref{LTB: hyp g}) and (\nameref{EBSD: H0}). We will assume that  (\nameref{EBSDE: H1}) or that (\nameref{EBSDE: H2}) hold: Thus, we can apply \autoref{EBSDE: Existence and uniqueness} in order to get the existence of a triplet $(u,\zeta,\lambda)$ such that $(u(X_t^{x,\loi \theta},\loi{X_t^{\loi \theta}}),\zeta(X_t^{x,\loi \theta},\loi{X_t^{\loi \theta}}),\lambda)$ is a solution to \eqref{LTB: Decoupled EBSDE} and $u$ has growth and smoothness properties specified in \autoref{EBSDE: Existence and uniqueness}. In the remaining of this section, we will always consider this solution of \eqref{LTB: Decoupled EBSDE}.
We will now establish three different long-time behaviour for \eqref{LTB: Decoupled BSDE}.
\begin{thm}[LTB 1]\label{LTB: LTB 1}Assume that (\nameref{EBSDE: H1})-(\nameref{LTB: hyp g}) or that (\nameref{EBSDE: H2})-(\nameref{LTB: hyp g}) hold. Then,
    there exists $C>0$ such that, for all $x\in\R^d,\theta\in\L{2q+2}(\Omega,\R^d)$ and all $T>0$,\begin{equation}\label{LTB: eq: LTB 1}
        \abs*{\frac{Y^{T,x,\loi\theta}_0}{T}-\lambda}\leq  C\frac{1+\abs*{x}^{q+1}+\norm*{\theta}_{2q+2}^{q+1}}{T}.
    \end{equation}
\end{thm}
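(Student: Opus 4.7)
The natural approach is to compare \eqref{LTB: Decoupled BSDE} and \eqref{LTB: Decoupled EBSDE} via a standard linearisation and a Girsanov change of probability. Set $\bar Y_t := Y^{T,x,\loi\theta}_t - Y^{x,\loi\theta}_t$ and $\bar Z_t := Z^{T,x,\loi\theta}_t - Z^{x,\loi\theta}_t$ on $[0,T]$. Subtracting the two equations gives
\begin{equation*}
    \bar Y_t = g(X^{x,\loi\theta}_T,\loi{X^\loi\theta_T}) - Y^{x,\loi\theta}_T + \int_t^T \bigl(f(X^{x,\loi\theta}_s,\loi{X^\loi\theta_s},Z^{T,x,\loi\theta}_s) - f(X^{x,\loi\theta}_s,\loi{X^\loi\theta_s},Z^{x,\loi\theta}_s) + \lambda\bigr)\d s - \int_t^T \bar Z_s \d W_s.
\end{equation*}
Using the Lipschitz assumption of $f$ in $z$, one defines the usual bounded progressively measurable process $\beta$ (as in \eqref{beta alpha} with $x=x'$, $\theta=\theta'$) so that $f(\cdot,\cdot,Z^{T,x,\loi\theta}_s) - f(\cdot,\cdot,Z^{x,\loi\theta}_s) = \beta_s \bar Z_s$, and applies Girsanov's theorem to introduce a probability $\Q$ under which $W^\Q_s := W_s - \int_0^s \beta_u \d u$ is a Brownian motion.

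Evaluating the linear BSDE at $t=0$ and taking the $\Q$-expectation (the stochastic integral being a true $\Q$-martingale thanks to the $\L{2}_\text{loc}$-integrability of $\bar Z$ and the boundedness of $\beta$) yields
\begin{equation*}
    Y^{T,x,\loi\theta}_0 - Y^{x,\loi\theta}_0 - \lambda T = \E^\Q\bigl[g(X^{x,\loi\theta}_T,\loi{X^\loi\theta_T}) - u(X^{x,\loi\theta}_T,\loi{X^\loi\theta_T})\bigr],
\end{equation*}
where $u$ is the function given by \autoref{EBSDE: Existence and uniqueness}. Using (\nameref{LTB: hyp g})-1 and point \textit{2.} of \autoref{EBSDE: Existence and uniqueness}, both $g$ and $u$ have polynomial growth of order $q+1$, so
\begin{equation*}
    \bigl|Y^{T,x,\loi\theta}_0 - Y^{x,\loi\theta}_0 - \lambda T\bigr| \leq C\bigl(1+\E^\Q[|X^{x,\loi\theta}_T|^{q+1}] + \norm*{X^\loi\theta_T}^{q+1}_{2q+2}\bigr).
\end{equation*}

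The conclusion follows by invoking the uniform-in-time moment estimates: \autoref{SDE: H1: prop: Estim unif T} and \autoref{SDE: H1: prop: Estim unif T Q} under (\nameref{EBSDE: H1}), or \autoref{SDE: H2: prop: Estim unif T} and \autoref{SDE: H2-2: prop: estim moments} under (\nameref{EBSDE: H2}), both applied with the bounded drift $\beta$. These give $\E^\Q[|X^{x,\loi\theta}_T|^{q+1}] \leq C(1+|x|^{q+1}+\norm*{\theta}^{q+1}_{2q+2})$ and $\norm*{X^\loi\theta_T}^{q+1}_{2q+2} \leq C(1+\norm*{\theta}^{q+1}_{2q+2})$ uniformly in $T$. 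Combined with the bound $|Y^{x,\loi\theta}_0|=|u(x,\loi\theta)|\leq C(1+|x|^{q+1}+\norm*{\theta}^{q+1}_{2q+2})$ we deduce
\begin{equation*}
    \bigl|Y^{T,x,\loi\theta}_0 - \lambda T\bigr| \leq C\bigl(1+|x|^{q+1}+\norm*{\theta}^{q+1}_{2q+2}\bigr),
\end{equation*}
and dividing by $T$ yields \eqref{LTB: eq: LTB 1}. No real obstacle is expected: this is a very short argument that simply packages the EBSDE representation with the uniform moment bounds that were already the backbone of Section \ref{EBSDE}; the only point to keep tidy is ensuring the stochastic integral is a genuine martingale under $\Q$, which is immediate from the construction of $Z^{T,x,\loi\theta}$ and $Z^{x,\loi\theta}$ together with the boundedness of $\beta$.
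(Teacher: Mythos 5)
Your proposal is correct and follows essentially the same route as the paper: linearise the difference of the two BSDEs in $z$, apply Girsanov to represent $Y^{T,x,\loi\theta}_0 - Y^{x,\loi\theta}_0 - \lambda T$ as $\E^\Q[g-u]$ evaluated at time $T$, and conclude with the polynomial growth of $g$ and $u$ together with the uniform-in-time moment estimates under the tilted measure. The only cosmetic difference is that the paper first splits off the term $|Y^{x,\loi\theta}_0|/T$ before performing the linearisation, whereas you recombine it at the end; the content is identical.
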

\begin{proof}
    Let $x\in\R^d$, $\theta\in\L{2q+2}(\Omega,\R^d)$ and $T>0$. We have
    \begin{equation*}
        \abs*{\frac{Y^{T,x,{\loi\theta}}_0}{T}-\lambda}\leq \abs*{\frac{Y^{T,x,\loi\theta}_0-Y^{x,\loi\theta}_0 -\lambda T}{T}}+\abs*{\frac{Y^{x,\loi\theta}_0}{T}}.
    \end{equation*}
    First, by the growth property of $u$, $\abs*{Y^{x,\loi\theta}_0}=\abs*{u(x,\loi{\theta})}\leq C(1+\abs*{x}^{q+1}+\norm*{\theta}^{q+1}_{2q+2})$. Furthermore, we have, for $\~Z=Z^{T,x,\loi\theta}-Z^{x,\loi\theta}$,
    \begin{align*}
        Y^{T,x,\loi\theta}_0-Y^{x,\loi\theta}_0 -\lambda T&=g(X^{x,\loi\theta}_T,\loi{X^{\loi\theta}_T}) -u(X^{x,\loi\theta}_T,\loi{X^{\loi\theta}_T})+\int^T_0 f(X^{x,\loi\theta}_s,\loi{X^{\loi\theta}_s},Z^{T,x,\loi\theta}_s)-f(X^{x,\loi\theta}_s,\loi{X^{\loi\theta}_s},Z^{x,\loi\theta}_s) \d s \\&\quad-\int^T_0 \~Z_s \d W_s\\
        &= g(X^{x,\loi\theta}_T,\loi{X^{\loi\theta}_T}) -u(X^{x,\loi\theta}_T,\loi{X^{\loi\theta}_T})-\int^T_0 \~Z_s \left(  \d W_s -\beta_s^T\d s  \right),
    \end{align*}
    where \begin{equation*}
        \beta^T_s=\frac{f(X^{x,\loi\theta}_s,\loi{X^{\loi\theta}_s},Z^{T,x,\loi\theta}_s)-f(X^{x,\loi\theta}_s,\loi{X^{\loi\theta}_s},Z^{x,\loi\theta}_s)}{\abs*{\~Z_s}^2}\~Z_s^\top\1{Z^{T,x,\loi\theta}_s\neq Z^{x,\loi\theta}_s}.
    \end{equation*}
By assumptions on $f$,  $\beta^T$ is bounded, thus by Girsanov's theorem, there exists $\Q_T$ s.t $W^T_t =  W_t -\int^t_0 \beta_s^T\d s  $ is a $\Q_T$-Brownian motion. Then, we get from the polynomial growth of $u$ and the assumption on $g$
\begin{align}
    \abs*{Y^{T,x,\loi\theta}_0-Y^{x,\loi\theta}_0 -\lambda T}&=\abs*{\E^{\Q_T}\seg*{g(X^{x,\loi\theta}_T,\loi{X^{\loi\theta}_T})-u(X^{x,\loi\theta}_T,\loi{X^{\loi\theta}_T})}}\label{LTB: proof: P_t}\\
    &\leq \E^{\Q_T}\seg*{\abs*{g(X^{x,\loi\theta}_T,\loi{X^{\loi\theta}_T})}}+\E^{\Q_T}\seg*{\abs*{u(X^{x,\loi\theta}_T,\loi{X^{\loi\theta}_T})}}\notag\\
    &\leq C(1+\E^{\Q_T}\seg*{\abs*{X^{x,\loi\theta}_T}^{q+1}}+\norm*{X^{\loi\theta}_T}_{2q+2}^{q+1}).\notag
\end{align}
Using Jensen's inequality and  \autoref{SDE: H1: prop: Estim unif T Q} or \autoref{SDE: H2-2: prop: estim moments} (depending on the assumptions set), we get
\begin{equation}\label{LTB: eq: proof LTB 1}
        \abs*{Y^{T,x,\loi\theta}_0-Y^{x,\loi\theta}_0 -\lambda T}\leq C(1+\abs*{x}^{q+1}+\norm*{\theta}_{2q+2}^{q+1}),
\end{equation}
which gives the wanted result.
\end{proof}

\begin{rmq}
    This previous results would still holds if we drop the Markovian assumption on the terminal condition as long as it satisfies the suitable growth condition. Namely, it is enough to consider a terminal condition $\xi^T$ such that $\abs*{\xi^T}\leq C(1+\abs*{X^{x,\loi\theta}_T}^{q+1}+\norm*{{X^\loi\theta_T}}_{2q+2}^{q+1})$, see for instance \cite[Theorem 21]{Hu-Lemmonier} in the non-McKean-Vlasov framework. Moreover, it should also be possible to deal with an unbounded path-dependent terminal condition by adapting the proof of \cite[Theorem 4.1]{Hu-Madec-Richou}.
\end{rmq}
\begin{thm}[LTB 2]\label{LTB: LTB 2}Assume that (\nameref{EBSDE: H1})-(\nameref{LTB: hyp g}) or that (\nameref{EBSDE: H2})-(\nameref{LTB: hyp g}) hold. Let us also enrich (\nameref{EBSDE: H1}) by assuming that $\sigma$ is uniformly elliptic. 
 Then, there exists $\ell \in \R$, $C>0$ and $\bm\eta>0$ such that, for all $x\in \R^d$, $\theta \in \L{2q+2}(\Omega;\R^d)$, $T >0$, we have
\begin{equation}\label{LTB: eq: LTB 2 H1}
            \abs*{Y^{T,x,\loi{\theta}}_0-\lambda T-Y^{x,\loi\theta}_0-\ell}\leq C\left( 1+\abs*{x}^{q+1}+\norm*{\theta}_{2q+2}^{q+1} \right)e^{-{\bm\eta}T}.
        \end{equation}
        
\end{thm}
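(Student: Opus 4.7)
The plan is to define $w^T(x,\loi\theta) := Y^{T,x,\loi\theta}_0 - \lambda T - u(x,\loi\theta)$, where $u$ is the function provided by \autoref{EBSDE: Existence and uniqueness} so that $Y^{x,\loi\theta}_0 = u(x,\loi\theta)$ (note that $u$ is unique under the additional uniform ellipticity of $\sigma$). By \eqref{LTB: eq: proof LTB 1} in the proof of \autoref{LTB: LTB 1}, $w^T$ is uniformly bounded in $T$ by $C(1+\abs*{x}^{q+1}+\norm*{\theta}^{q+1}_{2q+2})$, and admits the Girsanov representation $w^T(x,\loi\theta) = \E^{\Q_T}\seg*{(g-u)(X^{x,\loi\theta}_T, \loi{X^{\loi\theta}_T})}$, where the Girsanov drift $\beta^T$ is bounded uniformly by $K^f_z$, independently of $T$ and of the starting point. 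The target estimate \eqref{LTB: eq: LTB 2 H1} will follow from showing that $w^T$ converges to some $\ell \in \R$ exponentially fast.

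The first ingredient is a Markov semigroup identity for $w^T$. For any $s \in [0,T]$, applying Itô to $Y^{T,x,\loi\theta}_t - \lambda(T-t) - Y^{x,\loi\theta}_t$ between $0$ and $s$, using the Markovian/flow property $Y^{T,x,\loi\theta}_s = Y^{T-s, X^{x,\loi\theta}_s, \loi{X^{\loi\theta}_s}}_0$ and $Y^{x,\loi\theta}_s = u(X^{x,\loi\theta}_s, \loi{X^{\loi\theta}_s})$, and performing the Girsanov change obtained by linearizing $f$ in $z$ between $Z^{T,x,\loi\theta}$ and $Z^{x,\loi\theta}$, yields
$$w^T(x,\loi\theta) = \E^{\tilde\Q^{T,s}}\seg*{w^{T-s}(X^{x,\loi\theta}_s, \loi{X^{\loi\theta}_s})},$$
where $\tilde\Q^{T,s}$ is again a Girsanov measure with drift uniformly bounded by $K^f_z$.

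The core step is the exponential equicontinuity
$$\abs*{w^T(x,\loi\theta)-w^T(x',\loi{\theta'})} \leq C\bigl(1+\abs*{x}^{q+1}+\abs*{x'}^{q+1}+\norm*{\theta}^{q+1}_{2q+2}+\norm*{\theta'}^{q+1}_{2q+2}\bigr) e^{-\bm\eta T},$$
proved in the same spirit as the equicontinuity arguments of \autoref{EBSDE: Existence and uniqueness}. One writes Itô for the BSDE satisfied by the difference $(Y^{T,x,\loi\theta}-Y^{T,x',\loi{\theta'}})-(Y^{x,\loi\theta}-Y^{x',\loi{\theta'}})$, whose terminal value is $(g-u)(X^{x,\loi\theta}_T,\loi{X^{\loi\theta}_T})-(g-u)(X^{x',\loi{\theta'}}_T,\loi{X^{\loi{\theta'}}_T})$, and changes measure so that only one transformed probability appears. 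The Hölder regularity of $g$ (from (\nameref{LTB: hyp g})) and of $u$ (from \autoref{EBSDE: Existence and uniqueness}), combined with the forward coupling estimates---\autoref{SDE: H1: thm: Wp exponential contractivity} and \autoref{SDE: H1: cor: Wp exponential contractivity} under (\nameref{EBSDE: H1}), and the bootstrap involving \autoref{SDE: H2-2: thm: Wp exponential contractivity}, \autoref{SDE: H2-2: thm: Wp exponential estimate decoupled} and \autoref{SDE: H2-2: thm: coupling estimates decoupled} under (\nameref{EBSDE: H2})---produces the desired exponential decay; the uniform bound $K^f_z$ on all Girsanov drifts is crucial in order to apply these coupling estimates uniformly in $T$ and in $(x,x',\loi\theta,\loi{\theta'})$.

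Combining the uniform bound, the semigroup identity (taken at $s=T/2$) and the equicontinuity yields that $w^T(0,\delta_0)$ is Cauchy in $T$ at exponential rate, hence converges to some $\ell \in \R$; the equicontinuity then transfers this convergence to general $(x,\loi\theta)$ with the same rate, which gives \eqref{LTB: eq: LTB 2 H1}. The main obstacle is the equicontinuity itself: the naive comparison of the Girsanov representations $w^T(x,\loi\theta) = \E^{\Q_T}[\cdot]$ and $w^T(x',\loi{\theta'}) = \E^{\Q'_T}[\cdot]$ fails because $\Q_T$ and $\Q'_T$ are distinct measures built from distinct BSDE solutions. The BSDE-for-the-difference trick produces a single change of measure, after which the uniform $K^f_z$ bound lets us transfer the SDE contraction into a BSDE contraction. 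Under (\nameref{EBSDE: H2}) the iterative bootstrap of Section \ref{subsection:weak:dissip:BSDE} is additionally needed to handle the $\loi\theta$-direction, costing a Hölder exponent $\epsilon/2$---harmless for the final bound since it only affects the polynomial prefactor.
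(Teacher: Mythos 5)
Your overall skeleton---a uniform bound on $w^T:=Y^{T,\cdot}_0-\lambda T-u$, a flow/semigroup identity, an equicontinuity estimate, and a Cauchy-in-$T$ argument---is the right one and matches the paper's strategy. The gap is in what you call the core step: you claim an equicontinuity estimate for $w^T$ that decays like $e^{-\bm\eta T}$ \emph{in both the space and the measure variable}, and the linearization you propose does not deliver it in the measure direction. The two BSDEs $Y^{T,x',\loi\theta}-\lambda(T-\cdot)-Y^{x',\loi\theta}$ and $Y^{T,x',\loi{\theta'}}-\lambda(T-\cdot)-Y^{x',\loi{\theta'}}$ have drivers $\beta^T_s\cdot(Z^{T,x',\loi\theta}_s-Z^{x',\loi\theta}_s)$ and ${\beta'}^T_s\cdot(Z^{T,x',\loi{\theta'}}_s-Z^{x',\loi{\theta'}}_s)$ with \emph{different} linearizing drifts; forcing a single change of measure on their difference leaves a source term of the form $(\beta^T_s-{\beta'}^T_s)\cdot(Z^{T,x',\loi{\theta'}}_s-Z^{x',\loi{\theta'}}_s)$ whose integral over $[0,T]$ is not known to be small at this stage: a pointwise control of $Z^{T,\cdot}-Z^{\cdot}$ is precisely the content of \autoref{LTB: LTB 3}, which is proved \emph{after} \autoref{LTB: LTB 2} and under extra regularity on $b,\sigma$, while an $\L{2}$ energy estimate only yields $\E\int_0^T\abs*{\cdot}\d s\lesssim\sqrt{T}$. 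Keeping two measures and subtracting does not help either, since the two Girsanov densities do not approach each other as $T\to\infty$. In fact the exponential decay of the measure-direction increment of $w^T$ is essentially equivalent to the theorem itself, so it cannot serve as an independent intermediate step.

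The paper's proof is shaped around exactly this obstruction. Its \autoref{LTB: lemme} obtains the $e^{-\bm\eta T}$ decay only in the $x$-direction, where a single Markovian drift function $\beta^T(s,\cdot)$ serves both starting points $x,x'$ (the law argument $\loi{X^{\loi\theta}_s}$ being unchanged), so that the increment reduces to one modified semigroup $\Pcal^n_T[g-u]$ evaluated at two points, to which the forward coupling of \autoref{Appendix: thm general} applies. In the measure direction the lemma only yields a bound of the form $C(\cdots)\W(\loi\theta,\loi{\theta'})^\epsilon$ (resp. $\w(\cdots)^{\epsilon/2}$), uniform in $T$ but \emph{not} decaying. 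The decay in the measure variable is recovered a posteriori by comparing $w^T(0,x,\mu^*)$ with $w^S(0,x,\loi\theta)$ through the semigroup identity over a time window of length $S-T$, exploiting that $\loi{X^{\loi\theta}_{S-T}}$ converges exponentially to the invariant measure $\mu^*$ (\autoref{SDE: H1: thm: Wp exponential contractivity}) and that $\mu^*$ is a fixed point of the flow, so that along $\loi\theta=\mu^*$ only the $x$-direction equicontinuity is needed. Note also that your reference point $\delta_0$ would have to be replaced by $\mu^*$ for this to close, since $\loi{X^{\delta_0}_{T/2}}$ does not return to $\delta_0$. To repair your argument you should either adopt this detour through $\mu^*$, or supply an independent proof of the measure-direction decay, which your current single-change-of-measure trick does not provide.
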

\begin{proof}
    The idea of the proof follows the same as the one in \cite[Theorem 4.4]{Hu-Madec-Richou}. For all $T>0$, we introduce $w^T : [0,T] \times \R^d \times \Pcal_{2q+2}\rightarrow\R$, defined for all $t\in [0,T]$, $x\in\R^d,\, \theta\in\L{2q+2}(\Omega;\R^d)$ as $w^T(t,x,\loi\theta):=u^T(t,x,\loi\theta) - \lambda (T-t) -u(x,\loi\theta)$. 
     The equicontinuous property of $u$ allows us to obtain the following equicontinuous property on $w^T$ for all $T>0$.
     
     \begin{lem}\label{LTB: lemme} Assume that assumptions of \autoref{LTB: LTB 2} hold. There exists $C>0$ and $\bm\eta>0$ such that, for all $T>0$, $x,\,x'\in\R^d,$ $\theta,\,\theta'\in\L{2q+2}(\Omega;\R^d),$
\begin{enumerate}
    \item for (\nameref{EBSDE: H1}),
\begin{align}\nonumber
   {\abs*{w^T(0,x,\loi\theta)-w^T(0,x',\loi{\theta'})}}\leq& C(1+\abs*{x}^{q+1/2} + \abs*{x'}^{q+1/2}+\norm*{\theta}_{2q+2}^{q+1/2} + \norm*{\theta'}_{2q+2}^{q+1/2})\abs*{x-x'}^{\epsilon/2} e^{-\bm\eta T}\\
   & + C(1+\abs*{x}^{q} + \abs*{x'}^{q}+\norm*{\theta}_{2q+2}^{q} + \norm*{\theta'}_{2q+2}^{q})\W(\loi\theta,\loi{\theta'})^\epsilon,
   \label{LTB: estim wT H1}
\end{align}
    \item for (\nameref{EBSDE: H2}),
   \begin{align}
   \abs*{w^T(0,x,\loi\theta)-w^T(0,x',\loi{\theta'})}&\leq C(1+\abs*{x}^{q+1/2} + \abs*{x'}^{q+1/2}+\norm*{\theta}_{2q+2}^{q+1/2} + \norm*{\theta'}_{2q+2}^{q+1/2})\left(\abs*{x-x'}^{\epsilon/2}e^{-\bm\eta  T}+\w(\loi\theta,\loi{\theta'})^{\epsilon/2}\right).\label{LTB: estim wT H2}
\end{align}
\end{enumerate}
     \end{lem}
    
     \begin{proof}[Proof of \autoref{LTB: lemme}]
         1. We start by proving the result under (\nameref{EBSDE: H1}).
         
         1.1. First, let us fix $\theta$ and look only for the estimate w.r.t $x$, that is to say $\abs*{w^T(0,x,\loi\theta)-w^T(0,x',\loi\theta)}$.
         Let $\beta^T$ by given, for all $s\in\R_+,\, x\in\R^d$, by 
         $$ \beta^T(s,x)=\frac{f(x,\loi{X^{\loi\theta}_s},\zeta^T(x,\loi{X^\loi\theta_s}))-f(x,\loi{X^{\loi\theta}_s},\zeta(x,\loi{X^\loi\theta_s}))}{\abs*{(\zeta^T-\zeta)(x,\loi{X^\loi\theta_s})}^2}((\zeta^T-\zeta)(x,\loi{X^\loi\theta_s}))^\top\1{(\zeta^T-\zeta)(x,\loi{X^\loi\theta_s})\neq 0}.
         $$ Due to the Lipschitz property of $f$ w.r.t $z$, 
         $\beta^T$ is bounded by $K^f_z$. Consequently, let us set $\Q^T$ the Girsanov's probability associated to $\beta^T$ under which $\d W^T:=\d W_t -\beta^T(s,x)\d s$ is a Brownian motion. Then, 
          $w^T(0,x,\loi\theta)$ writes under $\Q^T$
         $$w^T(0,x,\loi\theta)=\E^{\Q^T}\seg*{g(X^{x,\loi\theta}_T,\loi{X^\loi\theta_T})-u(X^{x,\loi\theta}_T,\loi{X^\loi\theta_T})}.
         $$
         By using \autoref{lem: approx beta}, we denote $(\beta^T_n)$ a sequence of Lipschitz functions, uniformly bounded with respect to $n$, approximating $\beta^T_{\theta}$, $(\Q^T_n)$ the associated probabilities coming from Girsanov's Theorem, and, for all $n \geq 0$,
         $$w^{T,n}(0,x,\loi\theta):=\E^{\Q^T_n}\seg*{g(X^{x,\loi\theta}_T,\loi{X^\loi\theta_T})-u(X^{x,\loi\theta}_T,\loi{X^\loi\theta_T})}.$$ 
         Then, for all $n\geq 0$, we can introduce $\Pcal^n$ the semi-group associated to the following standard SDE
         $$\left\{\begin{array}{ll}
             \d X^{\beta^T_{n},x,\loi\theta}_t=\left( b(X^{\beta^T_{n},x,\loi\theta}_t,\loi{X^\loi\theta_t})+\beta^{T}_n (t,X^{\beta^T_{n},x,\loi\theta}_t)\sigma(X^{\beta^T_{n},x,\loi\theta}_t,\loi{X^\loi\theta_t})\right)\d t + \sigma(X^{\beta^T_{n},x,\loi\theta}_t,\loi{X^\loi\theta_t})\d W_t, & \quad t \in [0,T],  \\
             X^{\beta^T_{n},x,\loi\theta}_0=x.  & 
         \end{array}\right.
         $$ 
        
         Hence, we can write $w^{T,n}(0,x,\loi\theta)=\mathcal{P}^n_T\seg*{g-u}(x,\loi\theta)$ and we obtain
         \begin{align}\label{LTB: formulation semigroup x}
             \abs*{w^{T,n}(0,x,\loi\theta)-w^{T,n}(0,x',\loi\theta)}=\abs*{\Pcal^n_T\seg*{g-u}(x,\loi \theta)-\Pcal^n_T\seg*{g-u}(x',\loi \theta)}.
         \end{align}
         Let us prove that 
         $$\abs*{\Pcal^n_T\seg*{g-u}(x,\loi \theta)-\Pcal^n_T\seg*{g-u}(x',\loi \theta)}\leq C(1+\abs*{x}^{q+1/2} + \abs*{x'}^{q+1/2}+\norm*{\theta}_{2q+2}^{q+1/2})e^{-\gamma\epsilon T/2}\abs*{x-x'}^{\epsilon/2}.
         $$
         Let $(\UU,\UU')$ be an optimal coupling of $(X^{\beta^T_{n},x,\loi\theta},X^{\beta^T_{n},x',\loi\theta})$ w.r.t the $\w$-Wasserstein distance. By using the locally Hölder property of $g$ and $u$, Cauchy-Schwarz's inequality and \autoref{SDE: H1: prop: Estim unif T Q}, we obtain 
         \begin{align}
         &\abs*{\Pcal^n_T\seg*{g-u}(x,\loi\theta)-\Pcal^n_T\seg*{g-u}(x',\loi\theta)}\notag \\ \notag
             =&\abs*{\E\seg*{g(X^{\beta^T_{n},x,\loi\theta}_T,\loi{X^\loi\theta_T})-u(X^{\beta^T_{n},x,\loi\theta}_T,\loi{X^\loi\theta_T})}-\E\seg*{g(X^{\beta^T_{n},x',\loi\theta}_T,\loi{X^\loi\theta_T})-u(X^{\beta^T_{n},x',\loi\theta}_T,\loi{X^\loi\theta_T})}}\notag\\
             =&\abs*{\E\seg*{g(\UU_T,\loi{X^\loi\theta_T})-u(\UU_T,\loi{X^\loi\theta_T})}-\E\seg*{g(\UU'_T,\loi{X^\loi\theta_T})-u(\UU'_T,\loi{X^\loi\theta_T})}}\notag\\
             \leq& \E\seg*{\abs*{g(\UU_T,\loi{X^\loi\theta_T})-g(\UU'_T,\loi{X^\loi\theta_T})}}+\E\seg*{\abs*{u(\UU_T,\loi{X^\loi\theta_T})-u(\UU'_T,\loi{X^\loi\theta_T})}}\notag\\
             \leq& C\E\seg*{ (1+ \abs*{\UU_T}^q + \abs*{\UU'_T}^q+\norm*{X^\loi\theta_T}_{2q+2}^q)\abs*{\UU_T-\UU'_T}^\epsilon  }\label{LTB: eq: CS}\\
             \leq&  (1+ \E\seg*{\abs*{\UU_T}^{2q+1}}^{1/2} + \E\seg*{\abs*{\UU'_T}^{2q+1}}^{1/2}+\norm*{X^\loi\theta_T}_{2q+2}^{q+1/2})\E\seg*{\abs*{\UU_T-\UU'_T}}^{\epsilon/2}\notag\\
             \leq&(1+ \E^{\Q_n^T}\seg*{\abs*{X^{x,\loi\theta}_T}^{2q+1}}^{1/2} + \E^{\Q_n^T}\seg*{\abs*{X^{x',\loi\theta}_T}^{2q+1}}^{1/2}+\norm*{X^\loi\theta_T}_{2q+2}^{q+1/2})\w(\loi{X^{\beta^T_{n},x,\loi\theta}_T},\loi{X^{\beta^T_{n},x',\loi\theta}_T})^{\epsilon/2}\notag\\
             \leq& C(1+\abs*{x}^{q+1/2} + \abs*{x'}^{q+1/2}+\norm*{\theta}_{2q+2}^{q+1/2})\w(\loi{X^{\beta^T_{n},x,\loi\theta}_T},\loi{X^{\beta^T_{n},x',\loi\theta}_T})^{\epsilon/2}.\notag
         \end{align}
        Now, we want to use \autoref{Appendix: thm general} with $\b(t,\cdot)=\b'(t,\cdot)=b(\cdot,\loi{X^\loi\theta_t})+\sigma(\cdot,\loi{X^\loi\theta_t})\beta^T_n(t,\cdot)$. Firstly, we can check that $\b$ satisfies \eqref{Appendix: hyp: dissipatif} uniformly in $n$: for all $x,x'\in\R^d$, $ t\in\R_+$, $R'>0$,
         \begin{align*}
             &\braket*{x-x'}{b(x,\loi{X^{\loi\theta}_t})-b(x',\loi{X^\loi\theta_t})} + \braket*{x-x'}{\sigma(x,\loi{X^{\loi\theta}_t})\beta^T_n(t,x)-\sigma(x',\loi{X^\loi\theta_t})\beta^T_n (t,x)}\\
              \leq & -\eta \abs*{x-x'}^2 +2\norm*{\sigma}_\infty\abs*{\beta^T_n}_\infty\abs*{x-x'} \\
             \leq & -\eta\abs*{x-x'}^2\1{\abs*{x-x'}> 2}+2\norm*{\sigma}_\infty K^f_z\abs*{x-x'}(\1{\abs*{x-x'}> 2} + \1{\abs*{x-x'}\leq 2})\\
             \leq & -(\eta-\frac{\norm*{\sigma}_\infty K^f_z}{R'})\abs*{x-x'}^2\1{\abs*{x-x'}>R'}+2\norm*{\sigma}_\infty K^f_z\1{\abs*{x-x'}\leq R}.
         \end{align*}
         Hence, $\b$ satisfies \autoref{Appendix: hyp} with $\bm\eta=\eta-\frac{\norm*{\sigma}_\infty K^f_z}{R'}>K_x^{\sigma}$ considering $R'$ to be large enough. Consequently, we can apply \autoref{Appendix: thm general} with $\EE_t=\bm{c}=0$ to obtain 
         \begin{equation}\label{LTB: Wassertein Q H1}
             \w\left( \loi{X^{\beta^T_{n},x,\loi\theta}_t},\loi{X^{\beta^T_{n},x',\loi{\theta}}_t}  \right) \leq Ce^{-\hat{\eta}t}\abs*{x-x'}
         \end{equation}
         and we get, uniformly in $n$,
         \begin{equation}
         \label{regularity-wTn}
         \abs*{w^{T,n}(0,x,\loi\theta)-w^{T,n}(0,x',\loi\theta)}\leq C(1+\abs*{x}^{q+1/2} + \abs*{x'}^{q+1/2}+\norm*{\theta}_{2q+2}^{q+1/2})\abs*{x-x'}^{\epsilon/2}e^{-\hat\eta\epsilon T/2}.
         \end{equation}
        Now we want to take $n \rightarrow + \infty$ in order to conclude this first step. Let us remark that $(w^T(t,X^{x,\loi \theta}_t,\loi{X_t^{\loi \theta}}),\delta Z_t)_{t \in [0,T]}$,  where $\delta Z_t := Z^{T,x ,\loi \theta}_t - Z^{x,\loi \theta}_t$, is solution of the following BSDE
        $$w^{T}(t,X^{x,\loi \theta}_t,\loi{X_t^{\loi \theta}}) = g(X^{x,\loi \theta}_T,\loi{X_T^{\loi \theta}}) - u(X^{x,\loi \theta}_T,\loi{X_T^{\loi \theta}}) + \int_t^T \delta Z_s \beta^T (s,X^{x,\loi \theta}_s) \d s - \int_t^T \delta Z_s \d W_s, \quad t \in [0,T].$$
        Moreover, by the existence and uniqueness of Lipschitz driven BSDE result, see \cite{Pardoux-Peng}, there exists a progressively measurable process $\delta Z^n$ such that 
        $(w^{T,n}(t,X^{x,\loi \theta}_t,\loi{X_t^{\loi \theta}}),\delta Z^n_t)_{t \in [0,T]}$ is solution of the following BSDE
        $$w^{T,n}(t,X^{x,\loi \theta}_t,\loi{X_t^{\loi \theta}}) = g(X^{x,\loi \theta}_T,\loi{X_T^{\loi \theta}}) - u(X^{x,\loi \theta}_T,\loi{X_T^{\loi \theta}}) + \int_t^T \delta Z^n_s \beta_n^T (s,X^{x,\loi \theta}_s) \d s - \int_t^T \delta Z^n_s \d W_s, \quad t \in [0,T].$$
        Then a classical $\L{2}$-stability result for Lipschitz BSDEs gives us 
        $$\abs{w^T(0,x,\loi \theta)-w^{T,n}(0,x,\loi \theta)}^2\leq C \int_0^T \mathbb{E}[ \abs{\beta^T (s,X^{x,\loi \theta}_s)-\beta_n^T (s,X^{x,\loi \theta}_s)}^2 \d s$$
        and, applying Lebesgue's convergence theorem, we obtain
        $\lim{n \to + \infty} w^{T,n}(0,x,\loi \theta) = w^T(0,x,\loi \theta)$.
        Thus, \eqref{regularity-wTn} allows to get
         $$\abs*{w^{T}(0,x,\loi\theta)-w^{T}(0,x',\loi\theta)}\leq C(1+\abs*{x}^{q+1/2} + \abs*{x'}^{q+1/2}+\norm*{\theta}_{2q+2}^{q+1/2})\abs*{x-x'}^{\epsilon/2}e^{-\hat\eta\epsilon T/2},$$
         which is the wanted estimate.
         
 
         1.2. For the $\theta$ part, let us fix $x'\in\R^d$. We recall that, for all $\theta,\theta'\in\L{2q+2}(\Omega;\R^d)$,
         \begin{equation}
         \label{eq:decomposition:wT}
         w^T(0,x',\loi\theta)-w^T(0,x',\loi{\theta'})=u^T(0,x',\loi\theta)-u^T(0,x',\loi{\theta'}) - (u(x',\loi\theta)-u(x',\loi{\theta'})).
         \end{equation}
         Since $u$ is $\epsilon-$locally Hölder w.r.t $\loi\theta$, see \autoref{EBSDE: Existence and uniqueness}, it suffices to prove that $u^T$ is too. By introducing ${\beta'}^T$ given by 
         \begin{equation}\label{LTB: beta Z^T}
    {\beta'_s}^T =\frac{f(X^{x',\loi\theta}_s,\loi{X^{\loi\theta}_s},Z^{T,x',\loi\theta}_s)-f(X^{x',\loi{\theta}}_s,\loi{X^{\loi{\theta}}_s},Z^{T,x',\loi{\theta'}}_s)}{\abs*{Z^{T,x',\loi\theta}_s-Z^{T,x',\loi{\theta'}}_s}^2}\left( Z^{T,x',\loi\theta}_s-Z^{T,x',\loi{\theta'}}_s\right)^\top\1{Z^{T,x',\loi\theta}_s\neq Z^{T,x',\loi{\theta'}}_s},
         \end{equation}
         and ${\Q'}^T$ the associated probability coming from Girsanov's theorem,
         we have
         \begin{align*}
             u^T(0,x',\loi\theta)-u^T(0,x',\loi{\theta'})&=g(X^{x',\loi\theta}_T,\loi{X^\loi\theta_T})-g(X^{x',\loi{\theta'}}_T,\loi{X^\loi{\theta'}_T})\\
             &\quad+ \int^T_0 \left(f(X^{x',\loi\theta}_s,\loi{X^{\loi\theta}_s},Z^{T,x',\loi\theta}_s)-f(X^{x',\loi{\theta'}}_s,\loi{X^{\loi{\theta'}}_s},Z^{T,x',\loi{\theta'}}_s)\right)\d s\\
             &\quad-\int^T_0 \left(Z^{T,x',\loi\theta}_s-Z^{T,x',\loi{\theta'}}_s\right)\d W_s \\
             &=g(X^{x',\loi\theta}_T,\loi{X^\loi\theta_T})-g(X^{x',\loi{\theta'}}_T,\loi{X^\loi{\theta'}_T})\\
             &\quad + \int^T_0 \left(f(X^{x',\loi\theta}_s,\loi{X^{\loi\theta}_s},Z^{T,x',\loi\theta}_s)-f(X^{x',\loi{\theta'}}_s,\loi{X^{\loi{\theta'}}_s},Z^{T,x',\loi{\theta}}_s)\right)\d s \\
             &\quad -\int^T_0 \left(Z^{T,x',\loi\theta}_s-Z^{T,x',\loi{\theta'}}_s\right)(\d W_s-{\beta'_s}^T \d s)\\
             &=\E^{{\Q'}^T}\seg*{g(X^{x',\loi\theta}_T,\loi{X^\loi\theta_T})-g(X^{x',\loi{\theta'}}_T,\loi{X^\loi{\theta'}_T})}\\
             &\quad+\E^{{\Q'}^T}\seg*{\int^T_0 \left(f(X^{x',\loi\theta}_s,\loi{X^{\loi\theta}_s},Z^{T,x',\loi\theta}_s)-f(X^{x',\loi{\theta'}}_s,\loi{X^{\loi{\theta'}}_s},Z^{T,x',\loi{\theta}}_s)\right)\d s}.
             \end{align*}
             Then, by using the locally Hölder property of $g$ and $f$, Cauchy-Schwarz's inequality, \autoref{SDE: H1: prop: Estim unif T Q}, \autoref{SDE: H1: thm: Wp exponential contractivity} and \autoref{SDE: H1: cor: Wp exponential contractivity}, we obtain 
             \begin{align*}
             &\abs*{u^T(0,x',\loi\theta)-u^T(0,x',\loi{\theta'})}\\
             \leq& \E^{{\Q'}^T}\seg*{\abs*{g(X^{x',\loi\theta}_T,\loi{X^\loi\theta_T})-g(X^{x',\loi{\theta'}}_T,\loi{X^\loi{\theta'}_T})}}\\
             \quad&+\int^T_0\E^{{\Q'}^T}\seg*{\abs*{f(X^{x',\loi\theta}_s,\loi{X^{\loi\theta}_s},Z^{T,x',\loi\theta}_s)-f(X^{x',\loi{\theta'}}_s,\loi{X^{\loi{\theta'}}_s},Z^{T,x',\loi{\theta}}_s)}}\d s\\
             \leq& \E^{{\Q'}^T}\seg*{C(1+\abs*{X^{x',\loi\theta}_T}^q+\abs*{X^{x',\loi{\theta'}}_T}^q+\norm*{X^{\loi\theta}_T}^q_{2q+2}+\norm*{X^{\loi{\theta'}}_T}^q_{2q+2})\left(\abs*{X^{x',\loi{\theta}}_T-X^{x',\loi{\theta'}}_T}^{\epsilon}+\W(\loi{X^\loi\theta_T},\loi{X^{\loi{\theta'}}_T})^{\epsilon}\right)}\\
             \quad&+\int^T_0 \E^{{\Q'}^T}\seg*{C(1+\abs*{X^{x',\loi\theta}_s}^q+\abs*{X^{x',\loi{\theta'}}_s}^q+\norm*{X^{\loi\theta}_s}^q_{2q+2}+\norm*{X^{\loi{\theta'}}_s}^q_{2q+2})\left(\abs*{X^{x',\loi{\theta}}_s-X^{x',\loi{\theta'}}_s}^{\epsilon}+\W(\loi{X^\loi\theta_s},\loi{X^{\loi{\theta'}}_s})^{\epsilon}\right)}\\
             \leq&  C(1+\abs*{x'}^{q}+\norm*{\theta}_{2q+2}^q+\norm*{\theta'}_{2q+2}^q)\left(\E^{{\Q'}^T}\seg*{\abs*{X^{x',\loi{\theta}}_T-X^{x',\loi{\theta'}}_T}^2}^{\epsilon/2}+\W(\loi{X^\loi\theta_T},\loi{X^{\loi{\theta'}}_T})^{\epsilon}\right)\\
             \quad&+C(1+\abs*{x'}^{q}+\norm*{\theta}_{2q+2}^q+\norm*{\theta'}_{2q+2}^q)\int^T_0 \left(\E^{{\Q'}^T}\seg*{\abs*{X^{x',\loi{\theta}}_s-X^{x',\loi{\theta'}}_s}^2}^{\epsilon/2}+\W(\loi{X^\loi\theta_s},\loi{X^{\loi{\theta'}}_s})^{\epsilon}\right)\d s\\
             \leq& C(1+\abs*{x'}^{q}+\norm*{\theta}_{2q+2}^q+\norm*{\theta'}_{2q+2}^q)(e^{-\gamma \epsilon T}+e^{-\Lambda \epsilon T})\W(\loi\theta,\loi{\theta'})\\
             \quad& +C(1+\abs*{x'}^{q}+\norm*{\theta}_{2q+2}^q+\norm*{\theta'}_{2q+2}^q)\int^T_0 (e^{-\gamma \epsilon s}+e^{-\Lambda \epsilon s})\W(\loi\theta,\loi{\theta'})\d s\\
             \leq &C(1+\abs*{x'}^{q}+\norm*{\theta}_{2q+2}^q+\norm*{\theta'}_{2q+2}^q)\W(\loi{\theta},\loi{\theta'})^\epsilon,
         \end{align*}
         which concludes the proof of \autoref{LTB: lemme} under (\nameref{EBSDE: H1}).
         
         2. Under (\nameref{EBSDE: H2}), the local regularity of $w^T$ w.r.t. $x$, that is to say \eqref{LTB: estim wT H2} when $\theta=\theta'$, follows by applying \eqref{SDE: H2-2: Coupling estimate decoupled Q} to \eqref{LTB: formulation semigroup x}. It remains to prove the local regularity of $w^T$ w.r.t $\loi\theta$.
         Using once again the decomposition \eqref{eq:decomposition:wT} and since $u$ is $\frac{\epsilon}{2}-$locally Hölder w.r.t. $\loi\theta$, see \autoref{EBSDE: Existence and uniqueness}, it suffices to prove that $u^T$ is too.
         To that end, we will prove the result using a recursive argument. Let us consider $\~T>0$ whose value will be precised later and $N:=\lfloor {T}/{\~T} \rfloor$. By denoting $(Y^{T,s,x,\loi\theta},Z^{T,s,x,\loi\theta})$ the solution to the finite horizon BSDE \eqref{LTB: Decoupled BSDE} where $(X^{x,\loi\theta},\loi{X^\loi\theta})$ is replaced by $(X^{s,x,\loi\theta},\loi{X^{s,\loi\theta}})$ We have, for all $x \in \R^d$, $\theta \in  \L{2q+2}(\Omega;\R^d)$,
         \begin{align*}
         u^T\left(N\~T,x,\loi\theta\right)& =g(X^{{N\~T},x,\loi\theta}_{T},\loi{X^{{N\~T},\loi\theta}_{T}}) +\int^{T}_{N\~T} f(X^{{N\~T},x,\loi\theta}_s,\loi{X^{{N\~T},\loi\theta}_s},Z^{T,N\~T,x,\loi\theta}_s)\d s -\int^{T}_{N\~T} Z^{T,N\~T,x,\loi\theta}_s\d W_s.
         \end{align*}
         Let us show that $u^T(N\~T,x,\cdot)$ satisfies a locally Hölder estimate: we are looking for $A_0\geq0$ that does not depend on $N$ and $T$ such that
         \begin{equation*}
             \abs*{u^T\left(N\~T,x,\loi\theta\right)-u^T\left(N\~T,x,\loi{\theta'}\right)}\leq A_0(1+\abs*{x}^q+\norm*{\theta}^q_{2q+2}+\norm*{\theta'}^q_{2q+2})\w(\loi\theta,\loi{\theta'})^\epsilon.
         \end{equation*}
         
         First, by a Girsanov's argument with $\Q^{N\~T}$ the probability associated to $\beta^{N\~T}_s$ given by
         $$ {\beta}^{N\~T} _s=\frac{f(X^{N\~T,x,\loi\theta}_s,\loi{X^{N\~T,\loi\theta}_s},Z^{T,N\~T,x,\loi\theta}_s)-f(X^{N\~T,x,\loi{\theta}}_s,\loi{X^{N\~T,\loi{\theta}}_s},Z^{T,N\~T,x,\loi{\theta'}}_s)}{\abs*{Z^{T,x,\loi\theta}_s-Z^{T,N\~T,x,\loi{\theta'}}_s}^2}\left( Z^{T,N\~T,x,\loi\theta}_s-Z^{T,N\~T,x,\loi{\theta'}}_s\right)^\top\1{Z^{T,N\~T,x,\loi\theta}_s\neq Z^{T,N\~T,x,\loi{\theta'}}_s},
         $$
         and by using the locally $\epsilon$-Hölder property of $g$ and $f$, \eqref{EBSDE: H2: borne moment 2}, \autoref{SDE: H2-2: thm: Wp exponential estimate decoupled} on $\seg*{N\~T,T}$ and the facts that $T-N\~T\leq \~T$ and that $C_t$ in \eqref{EBSDE: H2: borne moment 2} is increasing in $t$, we obtain

         \begin{align}
             &\abs*{u^T\left(N\~T,x,\loi\theta\right)-u^T\left(N\~T,x,\loi{\theta'}\right)}\notag\\
             &\leq\E^{{\Q}^{N\~T}}\seg*{\abs*{g(X^{N\~T,x,\loi\theta}_{T},\loi{X^{N\~T,\loi\theta}_{T}})-g(X^{N\~T,x,\loi{\theta'}}_{T},\loi{X^{N\~T,\loi{\theta'}}_{T}})}}\\
             &\quad+\int^{T}_{N\~T}\E^{{\Q}^{N\~T}}\seg*{\abs*{f(X^{N\~T,x,\loi\theta}_s,\loi{X^{N\~T,\loi\theta}_s},Z^{T,N\~T,x,\loi\theta}_s)-f(X^{N\~T,x,\loi{\theta'}}_s,\loi{X^{N\~T,\loi{\theta'}}_s},Z^{T,N\~T,x,\loi{\theta}}_s)}}\d s\notag\\
             &\leq C(1+\abs*{x}^{q}+\norm*{\theta}_{2q+2}^q+\norm*{\theta'}_{2q+2}^q)\left(\E^{{\Q}^{N\~T}}\seg*{\abs*{X^{N\~T,x,\loi{\theta}}_{T}-X^{N\~T,x,\loi{\theta'}}_{T}}^2}^{\epsilon/2}+\w(\loi{X^{N\~T,\loi\theta}_{T}},\loi{X^{N\~T,\loi{\theta'}}_{T}})^{\epsilon}\right)\notag\\
             &\quad+C(1+\abs*{x}^{q}+\norm*{\theta}_{2q+2}^q+\norm*{\theta'}_{2q+2}^q)\int^{T}_{N\~T} \left(\E^{{\Q}^{N\~T}}\seg*{\abs*{X^{N\~T,x,\loi{\theta}}_s-X^{N\~T,x,\loi{\theta'}}_s}^2}^{\epsilon/2}+\w(\loi{X^{N\~T,\loi\theta}_s},\loi{X^{N\~T,\loi{\theta'}}_s})^{\epsilon}\right)\d s\label{LTB: recup induction}\\
             &\leq C(1+\abs*{x}^{q}+\norm*{\theta}_{2q+2}^q+\norm*{\theta'}_{2q+2}^q)\left( C_{T-N\~T} + Ce^{-\hat\eta\epsilon ({T-N\~T})}+C_{T-N\~T}(T-N\~T)+C\right)\w(\loi\theta,\loi{\theta'})^\epsilon\notag\\
             &\leq C(1+\abs*{x}^{q}+\norm*{\theta}_{2q+2}^q+\norm*{\theta'}_{2q+2}^q)\left( C_{\~T} + C \right)\w(\loi\theta,\loi{\theta'})^\epsilon\notag\\
             &=A_0(1+\abs*{x}^{q}+\norm*{\theta}_{2q+2}^q+\norm*{\theta'}_{2q+2}^q)\w(\loi\theta,\loi{\theta'})^\epsilon.\notag
         \end{align}

         Now we want to prove by induction the following estimate: for all $k\in\llbracket 0,N\rrbracket,$ 
         \begin{equation}
             \abs*{u^T\left((N-k)\~T,x,\loi\theta\right)-u^T\left((N-k)\~T,x,\loi{\theta'}\right)}\leq A_{k}(1+\abs*{x}^q+\norm*{\theta}^q_{2q+2}+\norm*{\theta'}^q_{2q+2})\w(\loi\theta,\loi{\theta'})^\epsilon,
         \end{equation}
         where $A_k$ satisfies, for $C,C_{\~T}>0$ that do not depends on $T$ nor $N$, 
         $$A_{k}=C_{\~T}+Ce^{-\hat\eta \epsilon \~T}A_{k-1},\quad k\in \llbracket 1, N\rrbracket, \quad A_0=C_{\~T}.
         $$
 If $N=0$, we already have the desired result. Let us now look at the case $N\geq1$. It has already been proven the initial case $k=0$ and we just have to look at the inductive step: for all $k\in\llbracket 1,N\rrbracket$, we have 
         \begin{align*}
            u^T\left((N-k)\~T,x,\loi\theta\right) &= u^T\left((N+1-k)\~T,X^{(N-k)\~T,x,\loi\theta}_{(N+1-k)\~T},\loi{X^{(N-k)\~T,\loi\theta}_{(N+1-k)\~T}}\right) +\int^{(N+1-k)\~T}_{(N-k)\~T} f(X^{(N-k)\~T,x,\loi\theta}_s,\loi{X^{(N-k)\~T,\loi\theta}_s},Z^{T,x,\loi\theta}_s)\d s \\&\quad -\int^{(N+1-k)\~T}_{(N-k)\~T} Z^{T,x,\loi\theta}_s\d W_s.  
         \end{align*}
        Then, replacing $g$ by $u^T((N+1-k)\~T,\cdot,\cdot)$ in previous computations, \eqref{LTB: recup induction} becomes
         \begin{align*}
             &\abs*{u^T\left((N-k)\~T,x,\loi\theta\right)-u^T\left((N-k)\~T,x,\loi{\theta'}\right)}\\
             &\leq C(1+\abs*{x}^{q}+\norm*{\theta}_{2q+2}^q+\norm*{\theta'}_{2q+2}^q)\E^{{\Q}^{(N-k)\~T}}\seg*{\abs*{X^{(N-k)\~T,x,\loi{\theta}}_{(N+1-k)\~T}-X^{(N-k)\~T,x,\loi{\theta'}}_{(N+1-k)\~T}}^2}^{\epsilon/2}\\
             &\quad +A_{k-1}\left(1+\abs*{x}^q+\norm*{\theta}^q_{2q+2}+\norm*{\theta'}^q_{2q+2}\right)\w(\loi{X^{(N-k)\~T,\loi\theta}_{(N+1-k)\~T}},\loi{X^{(N-k)\~T,\loi{\theta'}}_{(N+1-k)\~T}})^{\epsilon}\notag\\
             &\quad+C(1+\abs*{x}^{q}+\norm*{\theta}_{2q+2}^q+\norm*{\theta'}_{2q+2}^q)\\&\quad\times\int^{(N+1-k)\~T}_{(N-k)\~T} \left(\E^{{\Q}^{(N-k)\~T}}\seg*{\abs*{X^{(N-k)\~T,x,\loi{\theta}}_s-X^{(N-k)\~T,x,\loi{\theta'}}_s}^2}^{\epsilon/2}+\w(\loi{X^{(N-k)\~T,\loi\theta}_s},\loi{X^{(N-k)\~T,\loi{\theta'}}_s})^{\epsilon}\right)\d s\\
             &\leq C(1+\abs*{x}^{q}+\norm*{\theta}_{2q+2}^q+\norm*{\theta'}_{2q+2}^q)\left( C_{\~T} + Ce^{-\hat\eta\epsilon {\~T}}A_{k-1}\right)\w(\loi\theta,\loi{\theta'})^\epsilon\\
             &:=A_{k}(1+\abs*{x}^{q}+\norm*{\theta}_{2q+2}^q+\norm*{\theta'}_{2q+2}^q)\w(\loi\theta,\loi{\theta'})^\epsilon,
         \end{align*}
         where we used \eqref{SDE: H2: Estim unif T decoupled} and \eqref{SDE: H2: Estim unif T decoupled Q}. 
         Moreover, for $\~T$ such that $Ce^{-\hat\eta\epsilon\~T}\leq 1$, $(A_{n})_{n\geq 0}$ is a contractive sequence and hence has a unique fixed point $A_\infty$ which does not depend on $N$ nor $T$. Hence, by setting $A^* = \max(A_0,A_\infty)$ which does not depend on $N$ nor $T$, we have for all $k\in\llbracket 0,N\rrbracket$,
         $$\abs*{u^T\left((N-k)\~T,x,\loi\theta\right)-u^T\left((N-k)\~T,x,\loi{\theta'}\right)}\leq A^* (1+\abs*{x}^q+\norm*{\theta}^q_{2q+2}+\norm*{\theta'}^q_{2q+2})\w(\loi\theta,\loi{\theta'})^\epsilon.
         $$ 
        In particular, 
        $$\abs*{u^T\left(0,x,\loi\theta\right)-u^T\left(0,x,\loi{\theta'}\right)}\leq A^* (1+\abs*{x}^q+\norm*{\theta}^q_{2q+2}+\norm*{\theta'}^q_{2q+2})\w(\loi\theta,\loi{\theta'})^\epsilon,
         $$ 
which concludes the proof of \autoref{LTB: lemme}.
     \end{proof}
As soon as \autoref{LTB: lemme} holds, the proof of \autoref{LTB: LTB 2} is the same under (\nameref{EBSDE: H1}) or (\nameref{EBSDE: H2}), up to the differences implied by the equicontinuous terms and the polynomial growth {w.r.t $x$ and $\theta$} in \eqref{LTB: estim wT H1}-\eqref{LTB: estim wT H2}. Thus, we will only do the proof in the strong dissipative framework. 
Firstly, let us remark that setting $\loi\theta=\mu^*$, with $\mu^*$ the unique invariant measure of the solution to \eqref{SDE: MKV SDE},  transposes the MKV's framework to a standard one since $\loi{X_t^{\mu^*}}=\mu^*$ for all $t\geq 0$. Nevertheless, we cannot apply \cite[Theorem 22]{Hu-Lemmonier}  to get the result for $\theta\sim \mu^*$ since our assumptions do not fit perfectly the ones requested in \cite{Hu-Lemmonier}. 
Using \autoref{LTB: lemme} and a diagonal procedure, there exist $w$ and an increasing sequence $(T_i)_i\to+ \infty$ such that for all $(x,\loi\theta)\in \mathbb{D}$ a dense and countable subset of $\R^d\times \Pcal_{2q+2}(\R^d)$,
\begin{equation*}
    \lim{i\rightarrow\infty}w^{T_i}(0,x,\loi\theta)=w(0,x,\loi\theta),
\end{equation*}
which can be extends on the whole product space due to the equicontinuous estimate \eqref{LTB: estim wT H1}.
Moreover, by replacing $T$ by $T_i$ in \eqref{LTB: estim wT H1}, we get that $w$ does not depend on $x$, i.e. there exists ${\bar w}$,
such that for all $x \in \R^d$, $\theta \in \L{2q+2}(\Omega;\R^d)$,
\begin{equation*}
    \lim{i\rightarrow\infty}w^{T_i}(0,x,\loi\theta)={\bar w(\loi\theta)}.
\end{equation*}
Using the same notation as in part 1.1. of the proof of \autoref{LTB: lemme}, we have, for all $T \geq 0$, $x \in \R^d$, $\theta \in \L{2q+2}(\Omega;\R^d)$,
$$w^T(0,x,\loi \theta)= \lim{n \to + \infty} w^{T,n}(0,x,\loi \theta) \quad \textrm{and} \quad w^{T,n}(0,x,\loi \theta) = \mathcal{P}^n_T[g-u](x,\loi \theta), \quad \forall n \in \N.$$
Thanks to the semigroup property of $\mathcal{P}^n$ and \autoref{LTB: lemme} one could get for all $T\leq S$, $x\in\R^d$, $\theta\in\L{2q+2}(\Omega;\R^d)$ and $n \in \N$, that 

\begin{align}
    \abs*{w^{T,n}(0,x,\mu^*)-w^{S,n}(0,x,\loi\theta)}&=\abs*{w^{T,n}(0,x,\mu^*)-\Pcal^n_{{S}-T}\seg*{w^{T,n}(0,\cdot,{\cdot})}(x,\loi\theta)}\notag\\
    &\leq\E\abs*{w^{T,n}(0,x,\mu^*)-w^{T,n}(0,X^{\beta^T_n,x,\loi\theta}_{{S}-T},\loi{X^{\loi\theta}_{{S}-T}})}\notag\\
    &= \E\abs*{w^{T,n}(0,x,\loi{X^{\mu^*}_{S-T}})-w^{T,n}(0,X^{\beta^T_n,x,\loi\theta}_{{S}-T},\loi{X^{\loi\theta}_{{S}-T}})}\notag\\
    &\leq  C\E\left[\left(1+\abs*{x}^{q+1/2} + \abs*{X^{\beta^T_n,x,\loi\theta}_{S-T}}^{q+1/2}+\norm*{X^{\mu^*}_{S-T}}_{2q+2}^{q+1/2} + \norm*{X^{\loi\theta}_{S-T}}_{2q+2}^{q+1/2}\right)\abs*{x-X^{\beta^T_n,x,\loi\theta}_{S-T}}^{\epsilon/2} \right] e^{-\bm\eta  T}\notag\\
&\quad + C\E\left[\left(1+\abs*{x}^{q} + \abs*{X^{\beta^T_n,x,\loi\theta}_{S-T}}^{q}+\norm*{X^{\mu^*}_{S-T}}_{2q+2}^{q} + \norm*{X^{\loi\theta}_{S-T}}_{2q+2}^{q}\right)\right]\W(\loi{X^{\mu^*}_{S-T}},\loi{X^{\loi\theta}_{S-T}})^\epsilon.\notag
\end{align}
Since the right hand side of the inequality does not depend on $n$, we can take $n \to + \infty$. Then, by using \autoref{SDE: H1: thm: Wp exponential contractivity}, \autoref{SDE: H1: prop: Estim unif T} and \autoref{SDE: H1: prop: Estim unif T Q}, we get, for all $T\leq S$, $x\in\R^d$ and $\theta\in\L{2q+2}(\Omega;\R^d)$, that 
\begin{equation}
\label{eq: wT wTi}
    \abs*{w^{T}(0,x,\mu^*)-w^{S}(0,x,\loi\theta)} \leq C(1+\abs*{x}^{q+1}+\WW_{2q+2}(\mu^*,0)^{q+1}+\norm*{\theta}^{q+1}_{2q+2})(e^{-\bm\eta T} + \W(\mu^*,\loi\theta)e^{-\Lambda\epsilon (S-T)}).
\end{equation}
We apply \eqref{eq: wT wTi} with $\theta \sim \mu^*$, $S=T_i$ and we take $i \to + \infty$ to get 
\begin{equation}
\label{eq: wT mu*}
 \abs*{w^{T}(0,x,\mu^*)-\bar w(\mu^*)} \leq C(1+\abs*{x}^{q+1}+\WW_{2q+2}(\mu^*,0)^{q+1})e^{-\bm\eta T},
\end{equation}
which implies $\lim{T \to +\infty} w^{T}(0,x,\mu^*) = \bar w(\mu^*)$. Returning to \eqref{eq: wT wTi}, we take, $S=T_i$,  $i \to + \infty$ and then $T \to + \infty$ to obtain
$\lim{i \to +\infty} w^{T_i}(0,x,\loi \theta) = \bar w(\mu^*)$. Since $(w^T(0,x,\loi \theta))_T$ is bounded and has a unique accumulation point, we conclude that,  for all $x\in\R^d$ and $\theta\in\L{2q+2}(\Omega;\R^d)$, $\lim{T \to +\infty} w^{T}(0,x,\loi \theta) = \bar w(\mu^*)$. 
Finally, by using \eqref{eq: wT wTi} and \eqref{eq: wT mu*}, we obtain
\begin{eqnarray*}
    \abs*{w^{T}(0,x,\loi \theta)-\bar w(\mu^*)} &\leq& \abs*{w^{T}(0,x,\loi \theta)-w^{T/2}(0,x,\mu^*)}+ \abs*{w^{T/2}(0,x,\mu^*)-\bar w(\mu^*)}\\
    &\leq& C(1+\abs*{x}^{q+1}+\WW_{2q+2}(\mu^*,0)^{q+1}+\norm*{\theta}^{q+1}_{2q+2})(e^{-\bm\eta T/2} + \W(\mu^*,\loi\theta)e^{-\Lambda\epsilon T/2})\\
    &&+ C(1+\abs*{x}^{q+1}+\WW_{2q+2}(\mu^*,0)^{q+1})e^{-\bm\eta T/2}
\end{eqnarray*}
which gives us the result by setting  $\ell=\bar{w}(\mu^*)$.
\end{proof}

\begin{thm}[LTB 3]\label{LTB: LTB 3} Let (\nameref{EBSDE: H1})-(\nameref{LTB: hyp g}) or (\nameref{EBSDE: H2})-(\nameref{LTB: hyp g}) be fulfilled. If we assume that $b,\sigma$ are $\C^{1,0}$ with globally Lipschitz derivative w.r.t. $x$, then $x \mapsto u^T(t,x,\loi \theta)$ and $x \mapsto u(x,\loi\theta)$ are $\mathscr{C}^1$ for all $T>0$, $t\in [0,T)$, $\theta \in \L{2q+2}(\Omega;\R^d)$. Moreover, for all $T\geq 1$,  there exists $C>0$ and $\bm\eta>0$, such that, for all $x\in\R^d,$ $\theta\in\L{2q+2}(\Omega;\R^d)$, the following holds

\begin{equation}\label{LTB: eq: LTB 3 grad}
    \abs*{\nabla_x u^T(0,x,\loi{\theta})-\nabla_xu(x,\loi\theta)}\leq C(1+\abs*{x}^{q+1}+\norm*{\theta}^{q+1}_{2q+2})e^{-\bm\eta T}.
\end{equation}
and there exists a continuous version of $Z^{T,x,\loi\theta}$ and $Z^{x,\loi\theta}$ satisfying
\begin{equation}\label{LTB: eq: LTB 3 Z}
    \abs*{Z^{T,x,\loi\theta}_0-Z^{x,\loi\theta}_0}\leq C\norm*{\sigma}_\infty(1+\abs*{x}^{q+1}+\norm*{\theta}^{q+1}_{2q+2})e^{-\bm\eta T}.
\end{equation}

\end{thm}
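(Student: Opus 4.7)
The plan is to combine a Bismut-Elworthy-Li (BEL) representation of the gradients $\nabla_x u^T$ and $\nabla_x u$ with the exponential decay of $w^T := u^T(0,\cdot,\cdot) - \lambda T - u - \ell$ established in \autoref{LTB: LTB 2}; this is the McKean-Vlasov analogue of \cite[Theorem 23]{Hu-Lemmonier}, whose strategy we adapt. Under the added $\mathscr{C}^{1,0}$-assumption on $b$ and $\sigma$ with Lipschitz $x$-derivatives, the decoupled SDE \eqref{SDE: Decoupled MKV SDE} admits a classical first-variation process $\nabla_x X^{x,\loi\theta}$ solving a standard linear SDE — in which the law $\loi{X^{\loi\theta}}$ only appears as a time-inhomogeneous frozen parameter — which inherits the dissipativity of $b$ and hence satisfies uniform-in-time polynomial moment bounds. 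Standard BSDE differentiation then yields that $x \mapsto u^T(t,x,\loi\theta)$ is $\mathscr{C}^1$, that $Z^{T,x,\loi\theta}$ admits a continuous version, and that the Markovian identification $Z^{T,x,\loi\theta}_t = \nabla_x u^T(t,X^{x,\loi\theta}_t,\loi{X^{\loi\theta}_t})\,\sigma(X^{x,\loi\theta}_t,\loi{X^{\loi\theta}_t})$ holds, with the analogous statements for $u$ and $Z^{x,\loi\theta}$ obtained by passing to the limit on the $\alpha$-BSDE approximation of \autoref{EBSDE: Existence and uniqueness} (provided the BEL bound on $\nabla_x u^\alpha$ is shown to be uniform in $\alpha$).

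Since the law argument is frozen in \eqref{SDE: Decoupled MKV SDE}, the BEL-type BSDE representation of \cite{Furhmann-Tessitore_BE-Formula} applies on any auxiliary window $[0,T_0]$ and gives, for every $e \in \R^d$,
$$\nabla_x u^T(0,x,\loi\theta)\cdot e = \E\!\left[u^T(T_0, X^{x,\loi\theta}_{T_0},\loi{X^{\loi\theta}_{T_0}})\, N^e_{T_0}\right] + \E\!\left[\int_0^{T_0} f(X^{x,\loi\theta}_s,\loi{X^{\loi\theta}_s},Z^{T,x,\loi\theta}_s)\, N^e_{s}\, \d s\right],$$
with a Malliavin-type weight $N^e$ built from $\sigma^{-1}\nabla_x X^{x,\loi\theta}$ against a Brownian integral, and a strictly analogous identity for $\nabla_x u$ with $f - \lambda$ in place of $f$. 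Subtracting them and using $\nabla_x \ell = 0$ yields
$$\nabla_x(u^T - u)(0,x,\loi\theta)\cdot e = \E\!\left[w^T_{T_0}\,N^e_{T_0}\right] + \E\!\left[\int_0^{T_0} \bigl(f(\cdots,Z^{T,x,\loi\theta}_s) - f(\cdots,Z^{x,\loi\theta}_s)\bigr)N^e_s\,\d s\right],$$
where $w^T_{T_0} := u^T(T_0, X^{x,\loi\theta}_{T_0},\loi{X^{\loi\theta}_{T_0}}) - \lambda(T-T_0) - u(X^{x,\loi\theta}_{T_0}, \loi{X^{\loi\theta}_{T_0}}) - \ell$, which is controlled by \autoref{LTB: LTB 2} at horizon $T - T_0$.

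Fixing $T_0 = 1$, Cauchy-Schwarz combined with the polynomial-in-$x$ bound on $\E[|N^e_s|^2]^{1/2}$ inherited from the first-variation moments and the ellipticity of $\sigma$ handles the first term, which is bounded by $C(1 + |x|^{q+1} + \|\theta\|^{q+1}_{2q+2})e^{-\bm\eta(T-1)}$ via \autoref{LTB: LTB 2} together with the moment estimates of \autoref{SDE: H1: prop: Estim unif T} (resp. \autoref{SDE: H2: prop: Estim unif T}). For the second term, the $K^f_z$-Lipschitz property of $f$ in $z$ combined with a standard $\L{2}$-stability estimate for Lipschitz BSDEs bounds $\E\!\int_0^1 |Z^{T,x,\loi\theta}_s - Z^{x,\loi\theta}_s|^2\,\d s$ by $C\,\E|w^T_1|^2$, which is at most $C(1 + |x|^{2q+2} + \|\theta\|^{2q+2}_{2q+2})e^{-2\bm\eta(T-1)}$; Cauchy-Schwarz against $N^e$ then yields \eqref{LTB: eq: LTB 3 grad}. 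The estimate \eqref{LTB: eq: LTB 3 Z} follows immediately from the Markovian identification and the boundedness of $\sigma$.

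The main obstacle lies in the rigorous justification of the BEL formula at this level of generality. The data $g$ and the solutions $u^T, u$ are only locally Hölder with polynomial growth, so one cannot invoke the classical semigroup version of BEL; one must rely on the BSDE-intrinsic formulation of \cite{Furhmann-Tessitore_BE-Formula}, and carefully handle the $s^{-1/2}$ singularity of the Malliavin weight against the polynomial growth of $u^T$ and $f$. In addition, a uniform-in-$\alpha$ version of the BEL bound is needed to transfer the $\mathscr{C}^1$-regularity from $u^\alpha$ to $u$ via passage to the limit.
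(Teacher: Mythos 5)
Your overall strategy (a Bismut--Elworthy--Li representation combined with the exponential decay of $w^T$ from \autoref{LTB: LTB 2}) is the same as the paper's, and your treatment of the terminal term $\E\bigl[w^T_{T_0}N^e_{T_0}\bigr]$ by Cauchy--Schwarz and the uniform moment estimates is fine. The paper also obtains the $\mathscr{C}^1$ regularity of $u$ more directly than you propose: since $u$ is already constructed, $(u(X^{x,\loi\theta}_t,\loi{X^{\loi\theta}_t}))_t$ solves a finite-horizon BSDE on $[0,T]$ with terminal datum $u(X^{x,\loi\theta}_T,\loi{X^{\loi\theta}_T})$, so \cite[Theorem 4.2]{Furhmann-Tessitore_BE-Formula} applies to it as is, and no uniform-in-$\alpha$ gradient bound or passage to the limit along the $\alpha$-BSDEs is needed.

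There is, however, a genuine gap in your treatment of the driver-difference term. You propose to bound $\E\int_0^{1}|Z^{T,x,\loi\theta}_s-Z^{x,\loi\theta}_s|^2\,\d s$ by $L^2$-stability and then apply Cauchy--Schwarz against $N^e$. But $\E[|N^e_s|^2]^{1/2}\leq C s^{-1/2}$, so $\int_0^1\E[|N^e_s|^2]\,\d s$ diverges logarithmically at $s=0$, and the integrated control $\int_0^1\E[|\delta Z_s|^2]\,\d s<\infty$ is not enough to make $\int_0^1\E[|\delta Z_s|^2]^{1/2}\E[|N^e_s|^2]^{1/2}\,\d s$ finite (take $\E[|\delta Z_s|^2]^{1/2}\sim s^{-1/2}(\log(1/s))^{-1}$ for a counterexample). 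What is needed is a \emph{pointwise-in-$s$} control of $\E[|\delta Z_s|^2]^{1/2}$, and this is exactly where the paper uses the Markovian gradient representations \eqref{LTB: eq: gradient Z}--\eqref{LTB: eq: gradient Z T}: they give $|\delta Z_s|\leq\norm*{\sigma}_\infty|\nabla_x\bar w^T(s,X^{x,\loi\theta}_s,\loi{X^{\loi\theta}_s})|$, which turns the BEL identity on a \emph{small} window $[0,\delta]$ into a self-referencing inequality for the weighted norm $\abs*{\norm*{\nabla_x\bar w^T}}$ (singular like $(\delta-s)^{-1/2}$ at the right endpoint, so that $\int_0^\delta(\delta-s)^{-1/2}s^{-1/2}\,\d s=\pi$ is finite); choosing $\delta$ with $\pi K^f_z\norm*{\sigma}_\infty\norm*{\sigma^{-1}}_\infty\sqrt{\delta}e^{c\delta}<1$ then absorbs this term. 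Your argument needs this (or an equivalent pointwise bound on $\E[|\delta Z_s|^2]$) to close; as written, the second term is not controlled.
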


\begin{proof}
    First of all, \cite[Theorem 4.2]{Furhmann-Tessitore_BE-Formula} applied to BSDEs \eqref{LTB: Decoupled EBSDE}, seen as a finite horizon BSDE on $[0,T]$, and \eqref{LTB: Decoupled BSDE} gives us the $\mathscr{C}^1$ regularity of $u^T(t,.,\loi \theta)$ and $u(.,\loi \theta)$. Moreover we have the following gradient representation of $Z$ and $Z^T$, for $t \in [0,T)$,
\begin{align}
Z^{x,\loi\theta}_t&=\nabla_xY^{x,\loi\theta}_t\sigma(X^{x,\loi\theta}_t,\loi{X^\loi\theta_t})=\nabla_xu(X^{x,\loi\theta}_t,\loi{X^\loi\theta_t})\sigma(X^{x,\loi\theta}_t,\loi{X^\loi\theta_t})\label{LTB: eq: gradient Z}\\
Z^{T,x,\loi\theta}_t&=\nabla_xY^{T,x,\loi\theta}_t\sigma(X^{x,\loi\theta}_t,\loi{X^\loi\theta_t})=\nabla_xu^T(t,X^{x,\loi\theta}_t,\loi{X^\loi\theta_t})\sigma(X^{x,\loi\theta}_t,\loi{X^\loi\theta_t})\label{LTB: eq: gradient Z T}
\end{align}
that gives us some continuous version of $Z^{x,\loi\theta}$ and $Z^{T,x,\loi\theta}$.

    Let us set $\bar{w}^T:=w^T-\ell$ with $\ell$ given in \autoref{LTB: LTB 2}. By definition $\bar{w}^T$ solves the following finite horizon BSDE on $[0,T]$
        \begin{align*}
        \bar{w}^T(t,X^{x,\loi\theta}_t,\loi{X^\loi\theta_t})&=\bar{w}^T(T,X^{x,\loi\theta}_T,\loi{X^\loi\theta_T}) + \int^T_t \left(f(X^{x,\loi\theta}_s,\loi{X^\loi\theta_s},Z^{T,x,\loi\theta}_s)-f(X^{x,\loi\theta}_s,\loi{X^\loi\theta_s},Z^{x,\loi\theta}_s)\right) \d s\\
        &\quad-\int^T_t \left(Z^{T,x,\loi\theta}_s-Z^{x,\loi\theta}_s\right)\d W_s.
    \end{align*}
    Rewriting the previous BSDE on $[0,\delta]$ for some $\delta \in (0,T)$ and using the Markov property gives that $\bar{w}^T$ solves, for $t \in [0,\delta)$ the following equation
    \begin{align}
        \bar{w}^T(t,x,\loi\theta)&=\bar{w}^T(\delta,X^{t,x,\loi\theta}_\delta,\loi{X^{t,\loi\theta}_\delta}) + \int^\delta_t \left(f(X^{t,x,\loi\theta}_s,\loi{X^{t,\loi\theta}_s},Z^{T,t,x,\loi\theta}_s)-f(X^{t,x,\loi\theta}_s,\loi{X^{t,\loi\theta}_s},Z^{t,x,\loi\theta}_s)\right) \d s\notag \\
        &\quad-\int^\delta_t \left( Z^{T,t,x,\loi\theta}_s-Z^{t,x,\loi\theta}_s \right)\d W_s\notag\\
        &=\bar{w}^{T-\delta}(0,X^{t,x,\loi\theta}_\delta,\loi{X^{t,\loi\theta}_\delta})  + \int^\delta_t \left(f(X^{t,x,\loi\theta}_s,\loi{X^{t,\loi\theta}_s},Z^{T,t,x,\loi\theta}_s)-f(X^{t,x,\loi\theta}_s,\loi{X^{t,\loi\theta}_s},Z^{t,x,\loi\theta}_s)\right)\d s\notag\\
        &\quad-\int^\delta_t \left( Z^{T,t,x,\loi\theta}_s-Z^{t,x,\loi\theta}_s \right)\d W_s.\label{LTB: eq: v T}
    \end{align}
    We want to obtain a suitable bound on $\nabla_x\bar{w}^T(0,x,\loi\theta)$ by applying \cite[Theorem 4.2 - (4.6)]{Furhmann-Tessitore_BE-Formula}. In particular, we want to track precisely the constant that appears in this estimate. By checking carrefully the proof of \cite[Theorem 4.2 - (4.6)]{Furhmann-Tessitore_BE-Formula}, this bound follows from a standard regularization step and the application of \cite[Theorem 3.10 and Corollary 3.11]{Furhmann-Tessitore_BE-Formula}. In particular, we can assume that $f$ has the asked regularity by \cite[Theorem 3.10]{Furhmann-Tessitore_BE-Formula} in order to apply it. Due to \cite[Proposition 3.2]{McMurray-Crisan}, the regularity assumption on $b,\sigma$ w.r.t $x$ ensures that $X^{\cdot,\loi\theta}$ is differentiable for all $\theta\in\L{2q+2}(\Omega)$. Then, let us set 
    $$U(t,s):= \frac{1}{s-t}\int^t_s \braket*{\sigma(X^{t,x,\loi\theta}_{r},\loi{X^{t,\loi\theta}_{r}})^{-1}\nabla_xX^{t,x,\loi\theta}_r}{\d W_r}, \quad s \in (t,\delta].$$
    A direct application of Itô's isometry and the classical estimate  
    $\E\seg*{\abs*{\nabla_xX^{t,x,\loi\theta}_s}^2} \leq e^{2c\delta}$ give us, for all $x\in\R^d,\theta\in\L{2q+2}(\Omega;\R^d)$ and all $s\in (t,\delta]$, 
    $$\E\seg*{\abs*{U(t,s)}^2}^{1/2}\leq \frac{ \norm*{\sigma^{-1}}_\infty e^{c\delta}}{ \sqrt{s-t}}.
    $$
We apply \cite[Theorem 3.10]{Furhmann-Tessitore_BE-Formula} to \eqref{LTB: eq: v T}. Hence, we obtain by using Cauchy-Schwarz's inequality and \autoref{LTB: LTB 2}
    \begin{align}
        \nabla_x \bar{w}^T(t,x,\loi\theta)&=\E\seg*{\bar{w}^{T-\delta}(0,X^{t, x,\loi\theta}_\delta,\loi{X^{t,\loi\theta}_\delta}) U(t,\delta)}\notag \\
        &\quad+ \E\seg*{\int^\delta_t(f(X^{t,x,\loi\theta}_s,\loi{X^{t,\loi\theta}_s},Z^{T,t,x,\loi\theta}_s-f(X^{t, x,\loi\theta}_s,\loi{X^{t,\loi\theta}_s},Z^{t,x,\loi\theta}_s))U(t,s)\d s},\notag\\
        \abs*{\nabla_x \bar{w}^T(t,x,\loi\theta)}&\leq \E\seg*{C\left( 1+\abs*{X^{t,x,\loi\theta}_\delta}^{q+1}+\norm*{X^{t,\loi\theta}_\delta}^{q+1}_{2q+2} \right)e^{-\bm\eta (T-\delta)}\abs*{U(t,\delta)}}+ K^f_z\E\seg*{\int^\delta_t\abs*{Z^{T,t,x,\loi\theta}_s-Z^{x,t,\loi\theta}_s}\abs*{U(t,s)}\d s}. \label{LTB: eq: ineq grad v T}
    \end{align}
    Consequently, due to \eqref{LTB: eq: gradient Z}-\eqref{LTB: eq: gradient Z T}, the Markov property and the boundedness of $\sigma,$
    $$\abs*{Z^{T,t,x,\loi\theta}_s-Z^{t,x,\loi\theta}_s}\leq \norm*{\sigma}_\infty\abs*{\nabla_x\bar{w}^T(s,X^{t,x,\loi\theta}_s,\loi{X^{t,\loi\theta}_s})}.
    $$
    Then, \eqref{LTB: eq: ineq grad v T} writes, 
    \begin{align*}
        \abs*{\nabla_x \bar{w}^T(t,x,\loi\theta)}&\leq \E\seg*{C\left( 1+\abs*{X^{t,x,\loi\theta}_\delta}^{q+1}+\norm*{X^{t,\loi\theta}_\delta}^{q+1}_{2q+2} \right)\abs*{U(t,\delta)}}e^{-\bm\eta (T-\delta)}+ K^f_z\norm*{\sigma}_\infty \E\seg*{\int^\delta_t\abs*{\nabla_x\bar{w}^T(s,X^{t,x,\loi\theta}_s,\loi{X^{t,\loi\theta}_s})}\abs*{U(t,s)}\d s}.
    \end{align*}
    Hence, by mimicking the proof of \cite[Corollary 3.11]{Furhmann-Tessitore_BE-Formula}, and following their notations, we have 
    \begin{equation*}\begin{array}{l}
         I_1(t,x,\loi\theta)=I_2(t,x,\loi\theta)=0, \\[8pt]
         I_3(t,x,\loi\theta)= K^f_z\norm*{\sigma}_\infty \E\seg*{\displaystyle\int^\delta_t\abs*{\nabla_x\bar{w}^T(s,X^{t,x,\loi\theta}_s,\loi{X^{t,\loi\theta}_s}}\abs*{U(t,s)}\d s},\\[8pt]
         I_4(t,x,\loi\theta)=\E\seg*{C\left( 1+\abs*{X^{t,x,\loi\theta}_\delta}^{q+1}+\norm*{X^{t,\loi\theta}_\delta}^{q+1}_{2q+2} \right)e^{-\bm\eta(T-\delta)}\abs*{U(t,\delta)}} ,\\[8pt]
         \abs*{\norm*{\nabla_x \bar{w}^T}}:=\sup{t\in\seg*{0,\delta}}\sup{x\in\R^d}\sup{\theta\in\L{2q+2}(\Omega;\R^d)}\frac{\sqrt{\delta-t}}{(1+\abs*{x}^{q+1}+\norm*{\theta}_{2q+2}^{q+1})e^{-\bm\eta(T-\delta)}e^{c\delta}}\abs*{\nabla_x \bar{w}^{T}(t,x,\loi\theta)}
    \end{array}
    \end{equation*} we obtain, for $|||I_4|||$ defined the same way, due to Cauchy-Schwarz's inequality and \autoref{SDE: H1: prop: Estim unif T} or \autoref{SDE: H2: prop: Estim unif T}
    \begin{align*}
        I_4(t,x,\loi \theta)&\leq C\norm*{\sigma^{-1}}_\infty\frac{e^{c\delta}}{\sqrt{\delta-t}}(1+\abs*{x}^{q+1}+\norm*{\theta}_{2q+2}^{q+1})e^{-\bm\eta(T-\delta)},\\
        |||I_4|||&\leq C.
    \end{align*}
    Using again Cauchy-Schwarz and \autoref{SDE: H1: prop: Estim unif T} or \autoref{SDE: H2: prop: Estim unif T}, we also get
    \begin{align*}
        I_3(t,x,\loi\theta)&\leq  K^f_z\norm*{\sigma}_\infty \int^\delta_t\E\seg*{\abs*{\nabla_x\bar{w}^T(s,X^{t,x,\loi\theta}_s,\loi{X^{t,\loi\theta}_s})}^2}^{1/2}\E\seg*{\abs*{U(t,s)}^2}^{1/2}\d s\\
        &\leq K^f_z\norm*{\sigma}_\infty \abs*{\norm*{\nabla_x \bar{w}^T}}\int^\delta_t(1+\E\seg*{\abs*{X^{t,x,\loi\theta}_s}^{2q+2}}^{1/2}+\norm*{X^{t,\loi\theta}_s}^{q+1}_{2q+2})\frac{e^{-\bm\eta(T-\delta)}e^{c\delta}}{\sqrt{\delta-s}}\E\seg*{\abs*{U(t,s)}^2}^{1/2}\d s\\
        &\leq K^f_z\norm*{\sigma}_\infty (1+\abs*{x}^{q+1}+\norm*{\theta}_{2q+2}^{q+1})e^{-\bm\eta(T-\delta)}\abs*{\norm*{\nabla_x \bar{w}^T}}\int^\delta_t \frac{\norm*{\sigma^{-1}}_\infty e^{2c\delta}}{\sqrt{\delta-s}\sqrt{s}}\d s\\
        &=\pi K^f_z\norm*{\sigma}_\infty\norm*{\sigma^{-1}}_\infty  e^{2c\delta} (1+\abs*{x}^{q+1}+\norm*{\theta}_{2q+2}^{q+1})e^{-\bm\eta (T-\delta)}\abs*{\norm*{\nabla_x \bar{w}^T}},\\
        |||I_3|||&\leq \pi K^f_z\norm*{\sigma}_\infty\norm*{\sigma^{-1}}_\infty  \sqrt{\delta} e^{c\delta}\abs*{\norm*{\nabla_x \bar{w}^T}}.
    \end{align*}
 Hence, by taking $\delta$ small enough such that $\pi K^f_z\norm*{\sigma}_\infty\norm*{\sigma^{-1}}_\infty  \sqrt{\delta} e^{c\delta}<1$, we obtain, $||| \nabla_x\bar{w}^T(0,x,\loi\theta)|||\leq C$. Thus, we have for the well-chosen $\delta$
 \begin{align*}
 \abs*{\nabla_x \bar{w}^T(0,x,\loi\theta)}&\leq C(1+\abs*{x}^{q+1}+\norm*{\theta}^{q+1}_{2q+2})e^{-\bm\eta(T-\delta)}\frac{e^{c\delta}}{\sqrt{\delta}}\\
 &\leq C(1+\abs*{x}^{q+1}+\norm*{\theta}^{q+1}_{2q+2})e^{-\bm\eta T},
 \end{align*}
 which concludes the proof of \eqref{LTB: eq: LTB 3 grad}. For \eqref{LTB: eq: LTB 3 Z}, we use again the gradient representations \eqref{LTB: eq: gradient Z} and \eqref{LTB: eq: gradient Z T} to obtain, due to the boundedness of $\sigma,$
    $$\abs*{Z^{T,x,\loi\theta}_0-Z^{x,\loi\theta}_0}\leq\abs*{\nabla_xu^T(0,x,\loi\theta)\sigma(x,\loi\theta)-\nabla_xu(x,\loi\theta)\sigma(x,\loi\theta)}\leq \norm*{\sigma}_\infty\abs*{\nabla_x\bar{w}^T(0,x,\loi\theta)},
    $$
 which concludes the proof.
\end{proof}

\begin{rmq}
   One could remark that the above results holds for  $(Y^{T,\theta,\loi\theta},Z^{T,\theta,\loi\theta})=:(Y^{T,\loi\theta},Z^{T,\loi\theta})$ the solution to the following finite-horizon \emph{coupled} BSDE 
   $$Y^{T,\loi\theta}_t= g(X^{\loi\theta}_T,\loi{X^\loi\theta_T})+\int^T_t f(X^\loi\theta_s,\loi{X^\loi\theta_s},Z^{T,\loi\theta})\d s -\int^T_t Z^{T,\loi\theta}_s \d W_s, \quad\quad \forall t\in\seg*{0,T},\, \theta\in\L{2}(\Omega)
   $$
   towards $(Y^\loi\theta,Z^\loi\theta)$ part of the solution to \eqref{EBSDE: EBSDE}.
\end{rmq}

\section{Application to Optimal ergodic control problem}\label{OCP}
We introduce in this Section a partial McKean-Vlasov optimal ergodic control problem. Let us explain a little bit the denomination {\emph{partial}}. If we see our McKean-Vlasov dynamic as the limit of a particle system where the number of particles tends to infinity, we are considering a problem where we want to control in an optimal way only one particle of this system. Then the limiting McKean-Vlasov dynamic is not impacted by the control (which differs from the mean-field game framework). Let $\mathscr{A}$ be a closed, bounded subset of $\R^k$, with $k\in\N$ and let us define an admissible control $\bm{a}: (\omega,s)\in \Omega \times \R_+\rightarrow \mathscr{A}\ni a_s$ as an $\F_s$-progressively measurable process. We make the following assumptions throughout the section.
\begin{hyp}\label{OCP: hyp}Let $R\in\R^{d\times k}$ and $L,g$ be satisfying the following:
    \begin{enumerate}
        \item $L:\R^d\times \Pcal_{2q+2}\times \mathscr{A}\rightarrow\R$ and there exists $q\geq 0$, $C,K^L_x,K^L_\LL>0$, $\epsilon\in(0,1]$, such that for all $x,x'\in\R^d$, $\theta,\theta'\in\L{2q+2}(\Omega;\R^d)$, $a\in \mathscr{A}$, 
        \begin{enumerate}
            \item $\abs*{L(x,\loi{\theta},a)-L(x',\loi{\theta'},a)}\leq C(1+\abs*{x}^q+\abs*{x'}^q+\norm*{\theta}_{2q+2}^q+\norm*{\theta'}_{2q+2}^q)\left(K^L_x\abs*{x-x'}^\epsilon+K^L_\LL\W(\loi{\theta},\loi{\theta'})^\epsilon\right)$,
            \item $\abs*{L(x,\loi{\theta},a)}\leq C(1+\abs*{x}^{q+1}+\norm*{\theta}_{2q+2}^{q+1})$,
        \end{enumerate}
        \item $g:\R^d\times\Pcal_{2q+2}\mapsto\R$ and there exists $q \geq 0$, $C,K^g_x,K^g_\LL>0$, such that for all $x,x'\in\R^d$, $\theta, \theta'\in\L{2q+2}(\Omega;\R^d)$,
        \begin{enumerate}
            \item $\abs*{g(x,\loi{\theta})-g(x',\loi{\theta'})}\leq C(1+\abs*{x}^q+\abs*{x'}^q+\norm*{\theta}_{2q+2}^q+\norm*{\theta'}_{2q+2}^q)\left(   K^g_x\abs*{x-x'}^\epsilon+K^g_\LL\W(\loi{\theta},\loi{\theta'})^\epsilon  \right)$,
            \item $\abs*{g(x,\loi{\theta})}\leq C(1+\abs*{x}^{q+1}+\norm*{\theta}_{2q+2}^{q+1})$.
        \end{enumerate}
    \end{enumerate} 
    Without restriction we can take same constants $q$ and $C$ for $L$ and $g$.
    \end{hyp}
    Let us set, for any admissible control $\bm a$ and $T>0$ the following Girsanov's change of probability
    \begin{align*}
        \rho^{\bm a}_T&=\exp{-\frac{1}{2}\int^T_0 \abs*{Ra_t}^2 \d s +\int^T_0 (R a_t)^\top\d W_t}\quad \text{and} \quad
        \P^{\bm a}_T=\rho^{\bm a}_T\P.
    \end{align*}
 
    Thanks to our assumptions, 
    $W_t^{\bm a}:=W_t-\int^t_0 Ra_s\d s$ is a $\P^{\bm a}_T-$Brownian motion and so, we can write $X^{\loi \theta}$ and $X^{x,\loi\theta}$ the respective solutions to \eqref{SDE: MKV SDE} and \eqref{SDE: Decoupled MKV SDE} w.r.t to that new Brownian motion without control on the distribution. Namely, 
    \begin{align}\label{EDS control}
         \d X^{\loi \theta,\bm a}_t  &=\left(b(X^{\loi \theta,\bm a}_t,\loi{X^{\loi \theta,\bm a}_t})+\sigma(X^{\loi \theta,\bm a}_t,\loi{X^{\loi \theta,\bm a}_t})Ra_t\right)\d t +\sigma(X^{\loi \theta,\bm a}_t,\loi{X^{\loi \theta,\bm a}_t})\d W^{\bm a}_t,
    \end{align}
    and
    \begin{align}\label{EDS control decoupled}
        \d X^{x,\loi\theta,\bm a}_t &=\left(b(X^{x,\loi\theta,\bm a}_t,\loi{X^{\loi\theta,0}_t})+\sigma(X^{x,\loi\theta,\bm a}_t,\loi{X^{\loi\theta,0}_t})Ra_t\right)\d t +\sigma(X^{x,\loi\theta,\bm a}_t,\loi{X^{\loi\theta,0}_t})\d W^{\bm a}_t,
    \end{align}
    where $X^{\cdot,0}$ satisfies \eqref{EDS control} without control, i.e. $a_t=0$ for all $t\geq 0$.
    \begin{rmq}
        We stress the fact that \eqref{EDS control decoupled} is no longer of McKean-Vlasov's type but still have a distribution dependency w.r.t a McKean-Vlasov SDE.
    \end{rmq}
       Let us set the finite horizon cost as
    \begin{equation*}
        J^T(x,\loi\theta,\bm a)=\E^{\bm a}_T\seg*{\int^T_0 L(X^{x,\loi\theta,\bm a}_s,\loi{X^{\loi\theta,0}_s},a_s)\d s}+\E^{\bm a}_T\seg*{g(X^{x,\loi\theta,\bm a}_T,\loi{X^{\loi\theta,0}_T})}
    \end{equation*}
    and the associated optimal control problem as the minimization of $J^T$ over all admissible control $\bm a$. We can also define the ergodic cost as \begin{equation*}
        J(x,\loi\theta,\bm a)=\limsup_{T\rightarrow\infty}\frac{1}{T}\E^{\bm a}_T\seg*{\int^T_0 L(X^{x,\loi\theta,\bm a}_s,\loi{X^{\loi\theta,0}_s},a_s)\d s}
    \end{equation*}
    and the associated optimal control problem as the minimization of  $J$ over all admissible control $\bm a$.
    As usual, we introduce the associated Hamiltonian function as
        \begin{equation}\label{hamiltonian def}
        f(x,\loi{\theta},z):=\inf{a\in \mathscr{A}}\set*{L(x,\loi{\theta}, a)+ zR
        a}.
    \end{equation}
     Using Filippov's theorem (see \cite[Theorem 4]{filippov}), if the infimum is attained, there exists 
      $\varphi:\R^d\times\Pcal_{2q+2}\times\R^d\rightarrow \mathscr{A}$ measurable s.t. $$f(x,\loi{\theta},z)=L(x,\loi{\theta},\varphi(x,\loi{\theta},z))+zR(\varphi(x,\loi{\theta},z)).$$
      Following lemmas are direct applications of the definitions and assumptions given above.
\begin{lem}
\label{lem: f satisfies H0}
    Assume that \autoref{OCP: hyp} holds. Then the function $f$ given by \eqref{hamiltonian def} satisfies that
    \begin{enumerate}
        \item for all $x\in\R^d$ and all $\theta\in\L{2q+2}$, $\abs*{f(x,\loi{\theta},0)}\leq C(1+\abs*{x}^{q+1}+\norm*{\theta}_{2q+2}^{q+1})$,
        \item for all $x,x',z,z'\in\R^d$ and all $\theta,\theta'\in\L{2q+2}(\Omega;\R^d)$,
        \begin{align*}
            \abs*{f(x,\loi{\theta},z)-f(x',\loi{\theta'},z')}&\leq C(1+\abs*{x}^q+\abs*{x'}^q+\norm*{\theta}_{2q+2}^q+\norm*{\theta'}_{2q+2}^q)\left(K^L_x\abs*{x-x'}^\epsilon+K^L_\LL \W(\loi{\theta},\loi{\theta'})^\epsilon\right)\\&\quad+\abs*{R\mathscr{A}}_\infty\abs*{z-z'}.
        \end{align*}
    \end{enumerate}
    where $\abs*{R\mathscr{A}}_\infty=\sup{}\set*{\abs*{Ra},\,  a\in\mathscr{A}}$.    In particular $f$ satisfies (\nameref{EBSD: H0}) with $K^f_z=\abs*{R\mathscr{A}}_\infty$.
    \end{lem}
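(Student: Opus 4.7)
The plan is to derive both bullets directly from the definition $f(x,\loi\theta,z) = \inf_{a\in\mathscr{A}}\{L(x,\loi\theta,a) + zRa\}$ together with the growth and regularity of $L$ from \autoref{OCP: hyp}-1 and the boundedness of $\mathscr{A}$.

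For point \emph{1}, I would simply observe that $f(x,\loi\theta,0) = \inf_{a\in\mathscr{A}} L(x,\loi\theta,a)$, so by \autoref{OCP: hyp}-1.(b), $|f(x,\loi\theta,0)| \leq \sup_{a\in\mathscr{A}} |L(x,\loi\theta,a)| \leq C(1+|x|^{q+1}+\|\theta\|_{2q+2}^{q+1})$, since the constant $C$ in that assumption is uniform in $a \in \mathscr{A}$.

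For point \emph{2}, I would rely on the classical elementary inequality: for any two bounded-below families $(\alpha_a)_{a\in \mathscr{A}}$, $(\beta_a)_{a\in \mathscr{A}}$, one has $|\inf_{a} \alpha_a - \inf_{a} \beta_a| \leq \sup_{a}|\alpha_a - \beta_a|$. Applying this with $\alpha_a := L(x,\loi\theta,a) + zRa$ and $\beta_a := L(x',\loi{\theta'},a) + z'Ra$, and using the triangle inequality,
\begin{align*}
|f(x,\loi\theta,z)-f(x',\loi{\theta'},z')| &\leq \sup_{a\in\mathscr{A}} |L(x,\loi\theta,a)-L(x',\loi{\theta'},a)| + \sup_{a\in\mathscr{A}} |(z-z')Ra|.
\end{align*}
The first supremum is controlled by \autoref{OCP: hyp}-1.(a), which is uniform in $a$, and the second is bounded by $|R\mathscr{A}|_\infty |z-z'|$ by Cauchy--Schwarz together with the definition of $|R\mathscr{A}|_\infty$. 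Combining these two bounds gives exactly the claimed inequality.

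Finally, the claim that $f$ satisfies (\nameref{EBSD: H0}) with $K^f_z = |R\mathscr{A}|_\infty$ is an immediate rereading of points \emph{1} and \emph{2}: the Lipschitz constant in $z$ is precisely $|R\mathscr{A}|_\infty$, and the joint $(x,\loi\theta)$-regularity in \autoref{OCP: hyp}-1.(a) matches the locally Hölder condition \eqref{hyp-f} required in (\nameref{EBSD: H0})-2.(b), while the growth bound \emph{1} is exactly (\nameref{EBSD: H0})-2.(a). There is essentially no obstacle here; the statement is a routine compatibility check whose only subtlety is ensuring that the constants in \autoref{OCP: hyp}-1 are truly uniform in $a \in \mathscr{A}$, which is built into the formulation of that assumption.
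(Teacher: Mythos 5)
Your proof is correct and follows essentially the same route as the paper: the paper disposes of point \emph{1} by the same direct observation and of point \emph{2} by citing \cite[Lemma 5.2]{Furhmann-Tessitore_BE-Formula}, which encapsulates precisely the elementary inequality $|\inf_a \alpha_a - \inf_a \beta_a| \leq \sup_a |\alpha_a - \beta_a|$ that you spell out. Writing out the details rather than citing the reference is a harmless (arguably preferable) presentational difference, not a different argument.
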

    \begin{proof}
        \textit{1.} is a direct consequence of \eqref{hamiltonian def} and \autoref{OCP: hyp}-1., \textit{2.} is a consequence of \cite[Lemma 5.2]{Furhmann-Tessitore_BE-Formula}.
    \end{proof}
\begin{hyp*}[$\H_{OCP}$]\label{OCP: hyp: OCP}Assume that $\autoref{OCP: hyp}$ holds and that one of the following is fulfilled 
\begin{enumerate}
    \item (\nameref{SDE: H1}) holds, $\sigma$ is uniformly elliptic and $\nu>K^\sigma_x+\sqrt{2K^\sigma_x} \abs*{R\mathscr{A}}_\infty$.
    \item (\nameref{SDE: H2-2}) holds.
\end{enumerate}
\end{hyp*}
\begin{lem}\label{OCP: Comparaison finite}We assume that  (\nameref{OCP: hyp: OCP}) holds.
    For $(Y^{T,x,\loi\theta},Z^{T,x,\loi\theta})$ the solution to \eqref{LTB: Decoupled BSDE}, 
    we have that, for all admissible control $\bm a$, $$J^T(x,\loi\theta,\bm a)\geq Y^{T,x,\loi\theta}_0.$$ Moreover, if the infimum is attained for every $x,z\in\R^d$ in \eqref{hamiltonian def}, there exists $\bar{\bm{a}}^T$ an admissible control s.t. $J^T(x,\loi\theta,\bar{\bm{a}}^T)=Y^{T,x,\loi\theta}_0$ where \begin{equation*}
        \bar a^T_t :=\varphi\left(X^{x,\loi\theta,\bar{\bm{a}}^T}_t,\loi{X^{\loi\theta,0}_t},Z^{T,x,\loi\theta}_t\right)=\varphi\left(X^{x,\loi\theta,\bar{\bm{a}}^T}_t,\loi{X^{\loi\theta,0}_t},\nabla_xu^T (t, X^{x,\loi\theta,\bar{\bm{a}}^T}_t,\loi{X^{\loi\theta,0}_t})\sigma(X^{x,\loi\theta,\bar{\bm{a}}^T}_t,\loi{X^{\loi\theta,0}_t})\right).
    \end{equation*}
\end{lem}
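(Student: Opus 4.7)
The plan is to apply a Girsanov transformation to rewrite the BSDE \eqref{LTB: Decoupled BSDE} under $\P^{\bm a}_T$ and then exploit the defining inequality for the Hamiltonian $f$. Since $\bm a$ is valued in the bounded set $\mathscr{A}$, Novikov's criterion ensures that $\rho^{\bm a}_T$ is a true martingale and $W^{\bm a}$ is a $\P^{\bm a}_T$-Brownian motion. The crucial observation is that, under $\P^{\bm a}_T$, the process $X^{x,\loi\theta}$ has the dynamics of the controlled SDE \eqref{EDS control decoupled}, and since the flow of laws $\loi{X^\loi\theta_t}=\loi{X^{\loi\theta,0}_t}$ is a deterministic object computed once and for all under $\P$, the $\P^{\bm a}_T$-law of $X^{x,\loi\theta}$ coincides with the $\P$-law of $X^{x,\loi\theta,\bm a}$.

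Substituting $\d W_s=\d W^{\bm a}_s+Ra_s\,\d s$ in \eqref{LTB: Decoupled BSDE} at $t=0$ and adding and subtracting $L(X^{x,\loi\theta}_s,\loi{X^\loi\theta_s},a_s)$ would give, after taking the $\P^{\bm a}_T$-expectation,
\begin{align*}
    Y^{T,x,\loi\theta}_0 &= \E^{\bm a}_T\seg*{g(X^{x,\loi\theta}_T,\loi{X^\loi\theta_T})} + \E^{\bm a}_T\seg*{\int_0^T L(X^{x,\loi\theta}_s,\loi{X^\loi\theta_s},a_s)\,\d s} \\
    &\quad + \E^{\bm a}_T\seg*{\int_0^T \left(f(X^{x,\loi\theta}_s,\loi{X^\loi\theta_s},Z^{T,x,\loi\theta}_s)-L(X^{x,\loi\theta}_s,\loi{X^\loi\theta_s},a_s)-Z^{T,x,\loi\theta}_s Ra_s\right)\,\d s}.
\end{align*}
By the identification of laws just described, the first two terms sum to $J^T(x,\loi\theta,\bm a)$. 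By the very definition \eqref{hamiltonian def} of $f$, the integrand in the third term is non-positive for every admissible $\bm a$, which yields $Y^{T,x,\loi\theta}_0\leq J^T(x,\loi\theta,\bm a)$.

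For the equality statement, I would use Filippov's selector $\varphi$ and define $\bar a^T_t:=\varphi(X^{x,\loi\theta}_t,\loi{X^\loi\theta_t},Z^{T,x,\loi\theta}_t)$ directly on the original probability space. This is admissible since $\varphi$ is measurable and $\mathscr{A}$-valued, and under $\P^{\bar{\bm a}^T}_T$ the identification of $X^{x,\loi\theta}$ with $X^{x,\loi\theta,\bar{\bm a}^T}$ allows one to rewrite it as $\bar a^T_t=\varphi(X^{x,\loi\theta,\bar{\bm a}^T}_t,\loi{X^{\loi\theta,0}_t},Z^{T,x,\loi\theta}_t)$. With this choice the integrand in the third term vanishes identically, turning the previous inequality into an equality. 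The final Markovian representation of $\bar a^T_t$ follows from the gradient identity \eqref{LTB: eq: gradient Z T} coming from \autoref{LTB: LTB 3}.

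The main technical point is to justify that the stochastic integral $\int_0^\cdot Z^{T,x,\loi\theta}_s\,\d W^{\bm a}_s$ is a true $\P^{\bm a}_T$-martingale, so that its $\P^{\bm a}_T$-expectation vanishes at $T$. This reduces to verifying $\E^{\bm a}_T\seg*{\int_0^T\abs*{Z^{T,x,\loi\theta}_s}^2\,\d s}<\infty$, which follows routinely from Cauchy--Schwarz combined with the $\L{p}(\P)$-integrability of $\rho^{\bm a}_T$ (finite for every $p$ since $Ra$ is bounded) and standard higher-order $\L{p}$-estimates for the $Z$-component of a Lipschitz BSDE with polynomially growing data under $\P$; alternatively a localization by stopping times followed by dominated convergence achieves the same result.
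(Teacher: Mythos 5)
Your proposal is correct and follows essentially the same route as the paper, which simply invokes the definition of the Hamiltonian $f$ and refers to the classical argument of \cite[Theorem 7.1]{Furhman-Tessitore-Hu_EBSDE_Banach}: Girsanov to pass to $\P^{\bm a}_T$, non-positivity of $f-L-zRa$ from \eqref{hamiltonian def} for the inequality, and the Filippov selector for the equality. Your added care about the martingale property of $\int Z^{T,x,\loi\theta}\,\d W^{\bm a}$ under the new measure is a legitimate and correctly handled technical point that the paper leaves implicit.
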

\begin{lem}\label{OCP: Comparaison ergodic}We assume that  (\nameref{OCP: hyp: OCP}) holds. For $(Y^{x,\loi\theta},Z^{x,\loi\theta},\lambda)$ the solution to \eqref{LTB: Decoupled EBSDE}, we have that,  for all admissible control $\bm a$,  
$$J(x,\loi\theta,\bm a)\geq \lambda.$$ 
Moreover, if the infimum is attained for every $x,z\in\R^d$ in \eqref{hamiltonian def},  there exists $\bar{\bm a}$ an admissible control s.t $J(x,\loi\theta,\bar{\bm{a}})=\lambda$, where 
    $$\bar a_t=\varphi(X^{x,\loi\theta,\bar{\bm{a}}}_t,\loi{X^{\loi\theta,0}_t},Z^{x,\loi\theta}_t)=\varphi(X^{x,\loi\theta,\bar{\bm{a}}}_t,\loi{X^{\loi\theta,0}_t},\nabla_xu^T (t,X^{x,\loi\theta,\bar{\bm{a}}}_t,\loi{X^{\loi\theta,0}_t})\sigma(X^{x,\loi\theta,\bar{\bm{a}}}_t,\loi{X^{\loi\theta,0}_t})).$$
\end{lem}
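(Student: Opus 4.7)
The strategy mimics the classical BSDE-based verification argument for ergodic control and parallels the finite horizon proof of \autoref{OCP: Comparaison finite}: the EBSDE solution plays the role of a relative value function, and Girsanov's theorem converts the drift perturbation into the optimized control. Starting from the decoupled EBSDE satisfied by $(Y^{x,\loi\theta},Z^{x,\loi\theta},\lambda)$ and substituting $\d W_s = \d W^{\bm a}_s + Ra_s\d s$, we obtain, for any admissible $\bm a$,
\begin{equation*}
Y_0^{x,\loi\theta} - Y_T^{x,\loi\theta} = \int_0^T \bigl(f(X_s^{x,\loi\theta},\loi{X_s^{\loi\theta}},Z_s^{x,\loi\theta}) - Z_s^{x,\loi\theta}Ra_s - \lambda\bigr)\d s - \int_0^T Z_s^{x,\loi\theta}\d W^{\bm a}_s.
\end{equation*}
The Hamiltonian inequality $f(x,\loi\theta,z)\leq L(x,\loi\theta,a)+zRa$, immediate from \eqref{hamiltonian def}, then upper-bounds the driver by $L(X_s^{x,\loi\theta},\loi{X_s^{\loi\theta}},a_s)-\lambda$.

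I would next take expectation under $\P^{\bm a}_T$. The stochastic integral with respect to $W^{\bm a}$ is a true $\P^{\bm a}_T$-martingale by a standard localization argument, since $Z^{x,\loi\theta}\in\L{2}_{\text{loc}}$ under $\P$ and the Girsanov density $\rho^{\bm a}_T$ has all moments (the process $Ra$ being bounded). Using that the law of $X^{x,\loi\theta}$ under $\P^{\bm a}_T$ coincides with the law of the controlled process $X^{x,\loi\theta,\bm a}$ under $\P$, and that $\loi{X^{\loi\theta,0}}=\loi{X^{\loi\theta}}$, this yields
\begin{equation*}
\lambda T \leq \E^{\bm a}_T\bigl[Y_T^{x,\loi\theta}\bigr] - Y_0^{x,\loi\theta} + \E^{\bm a}_T\Bigl[\int_0^T L(X_s^{x,\loi\theta,\bm a},\loi{X_s^{\loi\theta,0}},a_s)\d s\Bigr].
\end{equation*}

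The crux is to show $T^{-1}\E^{\bm a}_T[Y_T^{x,\loi\theta}]\to 0$. Writing $Y_T^{x,\loi\theta}=u(X_T^{x,\loi\theta},\loi{X_T^{\loi\theta}})$ and using the polynomial growth $|u(x,\loi\theta)|\leq C(1+|x|^{q+1}+\|\theta\|_{2q+2}^{q+1})$ from \autoref{EBSDE: Existence and uniqueness}, the issue reduces to controlling $\sup_{T\geq 0}\E^{\bm a}_T[|X_T^{x,\loi\theta}|^{q+1}]$ and $\sup_{T\geq 0}\|X_T^{\loi\theta}\|_{2q+2}^{q+1}$. Since $\~\beta:=Ra$ is bounded, this follows from \autoref{SDE: H1: prop: Estim unif T Q} (under assumption~1 of (\nameref{OCP: hyp: OCP})) or \autoref{SDE: H2-2: prop: estim moments} (under assumption~2), combined with the time-uniform moment estimates \autoref{SDE: H1: prop: Estim unif T} or \autoref{SDE: H2: prop: Estim unif T} for $X^{\loi\theta}$. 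Hence $\sup_{T\geq 0}\E^{\bm a}_T[|Y_T^{x,\loi\theta}|]\leq C(1+|x|^{q+1}+\|\theta\|_{2q+2}^{q+1})$, so dividing by $T$ and taking $\limsup_{T\to\infty}$ yields $\lambda\leq J(x,\loi\theta,\bm a)$.

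For the optimality claim, when the infimum in \eqref{hamiltonian def} is attained, the Filippov selector $\varphi$ satisfies $f(x,\loi\theta,z)=L(x,\loi\theta,\varphi(x,\loi\theta,z))+zR\varphi(x,\loi\theta,z)$. Setting $\bar a_t:=\varphi(X_t^{x,\loi\theta},\loi{X_t^{\loi\theta}},Z_t^{x,\loi\theta})$, which is $\mathscr{A}$-valued and progressively measurable hence admissible, all Hamiltonian inequalities above become equalities, giving $\lambda T= \E^{\bar{\bm a}}_T[Y_T^{x,\loi\theta}] - Y_0^{x,\loi\theta} + \E^{\bar{\bm a}}_T[\int_0^T L\,\d s]$, so $\lambda = \lim_{T\to\infty}T^{-1}\E^{\bar{\bm a}}_T[\int_0^T L\,\d s]=J(x,\loi\theta,\bar{\bm a})$. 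The main technical point to watch is the martingale property under the Girsanov change, cleanly handled by localization and the boundedness of $Ra$; verifying that $\bar{\bm a}$ is indeed admissible (Borel measurability of $\varphi$ from Filippov and progressive measurability of the composition) is the other subtle check.
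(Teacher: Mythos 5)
Your proof is correct and follows essentially the same route as the paper, which simply refers the reader to the classical verification argument of \cite[Theorem 7.1]{Furhman-Tessitore-Hu_EBSDE_Banach}: Girsanov change of measure, the Hamiltonian inequality from \eqref{hamiltonian def}, and a Filippov selector for the equality case. The only ingredients specific to this setting — the time-uniform moment bounds under $\~\Q$ from \autoref{SDE: H1: prop: Estim unif T Q} or \autoref{SDE: H2-2: prop: estim moments} needed to kill $T^{-1}\E^{\bm a}_T[Y^{x,\loi\theta}_T]$ — are exactly the ones you invoke.
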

\begin{proof}[Proofs of \autoref{OCP: Comparaison finite} and \autoref{OCP: Comparaison ergodic}]
    The proofs follow from the definition of $f$, see for instance \cite[Theorem 7.1]{Furhman-Tessitore-Hu_EBSDE_Banach} in the strong dissipative and non-McKean-Vlasov's framework
\end{proof}
\begin{thm}\label{OCP: thm}We assume that  (\nameref{OCP: hyp: OCP}) holds.
    For every control $\bm a$, we have:
    \begin{equation}\label{eq 1}
        \liminf_{T\rightarrow\infty}\frac{J^T(x,\loi{\theta},\bm a)}{T}\geq \lambda.
    \end{equation}
        Furthermore, if the infimum is attained for every $x,\loi{\theta},z$ in \eqref{hamiltonian def}, we have that there exists $\ell \in \R$, $C> 0$, $\bm \eta >0$ such that:
    \begin{equation}\label{eq 2}
        \abs*{J^T(x,\loi{\theta},\bar{\bm{a}}^T) - J(x,\loi{\theta},\bar{\bm{a}})T -Y^{x,\loi\theta}_0 -\ell}\leq C(1+\abs*{x}^{q+1}+\norm*{\theta}^{q+1}_{2q+2})e^{-\bm{\eta} T}
    \end{equation}
    \end{thm}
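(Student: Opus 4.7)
The proof should be essentially a bookkeeping exercise combining the comparison lemmas with the long-time behaviour results already established. My plan is to separately handle the two assertions.

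For \eqref{eq 1}, I would first verify that under (\nameref{OCP: hyp: OCP}) the Hamiltonian $f$ defined by \eqref{hamiltonian def} satisfies (\nameref{EBSD: H0}) with $K^f_z=|R\mathscr{A}|_\infty$ (this is exactly \autoref{lem: f satisfies H0}), and that $g$ satisfies (\nameref{LTB: hyp g}) (immediate from \autoref{OCP: hyp}-2). In case 1 of (\nameref{OCP: hyp: OCP}) the constraint $\nu>K^\sigma_x+\sqrt{2K^\sigma_x}|R\mathscr{A}|_\infty$ together with invertibility of $\sigma$ gives (\nameref{EBSDE: H1}); in case 2 we have (\nameref{EBSDE: H2}). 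Thus \autoref{EBSDE: Existence and uniqueness} applies and produces the triple $(Y^{x,\loi\theta},Z^{x,\loi\theta},\lambda)$ used throughout. Then for any admissible $\bm a$, \autoref{OCP: Comparaison finite} gives $J^T(x,\loi\theta,\bm a)\geq Y^{T,x,\loi\theta}_0$, and dividing by $T$ and invoking \autoref{LTB: LTB 1} yields
\begin{equation*}
\liminf_{T\to\infty}\frac{J^T(x,\loi\theta,\bm a)}{T}\geq \lim_{T\to\infty}\frac{Y^{T,x,\loi\theta}_0}{T}=\lambda.
\end{equation*}

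For \eqref{eq 2}, assume further that the infimum in \eqref{hamiltonian def} is attained and pick the associated Filippov selector $\varphi$, so that by \autoref{OCP: Comparaison finite} and \autoref{OCP: Comparaison ergodic},
\begin{equation*}
J^T(x,\loi\theta,\bar{\bm a}^T)=Y^{T,x,\loi\theta}_0\qquad\text{and}\qquad J(x,\loi\theta,\bar{\bm a})=\lambda.
\end{equation*}
Substituting these identities into the left-hand side of \eqref{eq 2} reduces the estimate to
\begin{equation*}
\bigl|Y^{T,x,\loi\theta}_0-\lambda T-Y^{x,\loi\theta}_0-\ell\bigr|\leq C(1+|x|^{q+1}+\|\theta\|_{2q+2}^{q+1})e^{-\bm\eta T},
\end{equation*}
which is exactly the conclusion of \autoref{LTB: LTB 2}. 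To invoke that theorem in case 1 we need $\sigma$ uniformly elliptic, which is part of (\nameref{OCP: hyp: OCP})-1; in case 2 this is already ensured by (\nameref{SDE: H2-2})-2.

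There is no genuine obstacle: the entire content of the theorem is packaged in the earlier BSDE identities (\autoref{OCP: Comparaison finite}, \autoref{OCP: Comparaison ergodic}) and the long-time behaviour results (\autoref{LTB: LTB 1}, \autoref{LTB: LTB 2}). The only point that deserves explicit mention in the write-up is the verification that the standing hypotheses (\nameref{EBSDE: H1}) or (\nameref{EBSDE: H2}) together with (\nameref{LTB: hyp g}) are satisfied in the present control setting, since the Hamiltonian $f$ is only Lipschitz (not more regular) in $z$ and since $\sigma$ must be uniformly elliptic for \autoref{LTB: LTB 2} in the strongly dissipative regime—both points being delivered respectively by \autoref{lem: f satisfies H0} and by the explicit ellipticity assumption in (\nameref{OCP: hyp: OCP})-1.
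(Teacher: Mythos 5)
Your proposal is correct and follows exactly the paper's argument: verify via \autoref{lem: f satisfies H0} that the Hamiltonian satisfies (\nameref{EBSD: H0}), then deduce \eqref{eq 1} from \autoref{OCP: Comparaison finite} combined with \autoref{LTB: LTB 1}, and \eqref{eq 2} from the identities of \autoref{OCP: Comparaison finite} and \autoref{OCP: Comparaison ergodic} combined with \autoref{LTB: LTB 2}. The paper's own proof is a two-line version of the same reduction; your additional checks (growth of $g$, uniform ellipticity in the strongly dissipative case) are the right points to make explicit.
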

    \begin{proof}
            Thanks to \autoref{lem: f satisfies H0}, $f$ satisfies (\nameref{EBSD: H0}). Then, \eqref{eq 1} is a direct consequence of \autoref{OCP: Comparaison finite} and \autoref{LTB: LTB 1} while \eqref{eq 2} is a consequence of \autoref{LTB: LTB 2}. 
    \end{proof}
    \begin{thm} 
     If we additionally assume that $b,\sigma$ are $\C^{1,0}$ with gradient globally Lipschitz w.r.t x, that $\mathscr{A}$ is convex and that $L$ is $\bm\mu$-strongly convex, then, the infimum is attained in \autoref{OCP: thm} and there exists $C$ that depends on $\bm{\mu}$ such that,  for all $T\geq 1$, $x\in \R^d$, $\theta \in \L{2q+2}(\Omega;\R^d)$,
     \begin{equation}\label{eq 3}
        \abs*{\bar a^T_0-\bar a_0}\leq C(1+\abs*{x}^{q+1}+\norm*{\theta}^{q+1}_{2q+2})e^{-\bm\eta  T}.
    \end{equation}
\end{thm}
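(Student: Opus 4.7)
The plan is to exploit the strong convexity of $L$ in order to first deduce uniqueness and Lipschitz regularity (in $z$) of the minimizer map $\varphi$, and then to combine this with the exponential estimate on $Z^{T,x,\loi\theta}_0-Z^{x,\loi\theta}_0$ provided by \autoref{LTB: LTB 3}.

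First, I would verify that the infimum in the Hamiltonian \eqref{hamiltonian def} is attained uniquely. Since $\mathscr{A}$ is closed, bounded and convex in $\R^k$ and since $a\mapsto L(x,\loi\theta,a)+zRa$ is $\bm\mu$-strongly convex (the linear term $zRa$ does not affect strong convexity), the continuous map $a\mapsto L(x,\loi\theta,a)+zRa$ admits a unique minimizer $\varphi(x,\loi\theta,z)\in \mathscr{A}$ for every $(x,\loi\theta,z)$. To obtain Lipschitz regularity of $\varphi$ in $z$ I would write the first-order variational inequality satisfied by $\varphi(x,\loi\theta,z)$ and by $\varphi(x,\loi\theta,z')$, namely
\begin{equation*}
\langle \nabla_a L(x,\loi\theta,\varphi(x,\loi\theta,z))+R^\top z,\, a-\varphi(x,\loi\theta,z)\rangle \geq 0,\qquad \forall a \in \mathscr{A},
\end{equation*}
(and the analogue for $z'$), then test respectively with $a=\varphi(x,\loi\theta,z')$ and $a=\varphi(x,\loi\theta,z)$, add the two inequalities, and use the $\bm\mu$-strong monotonicity of $a\mapsto \nabla_a L(x,\loi\theta,a)$. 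This leads, by Cauchy--Schwarz, to the estimate
\begin{equation*}
\abs*{\varphi(x,\loi\theta,z)-\varphi(x,\loi\theta,z')}\leq \frac{\abs*{R}}{\bm\mu}\abs*{z-z'}.
\end{equation*}

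Second, I would specialize the representations of $\bar{\bm a}^T$ and $\bar{\bm a}$ recalled in \autoref{OCP: Comparaison finite} and \autoref{OCP: Comparaison ergodic} at $t=0$. Since $X^{x,\loi\theta,\bar{\bm a}^T}_0=X^{x,\loi\theta,\bar{\bm a}}_0=x$ and $\loi{X^{\loi\theta,0}_0}=\loi\theta$, these expressions simplify to
\begin{equation*}
\bar a^T_0=\varphi(x,\loi\theta,Z^{T,x,\loi\theta}_0),\qquad \bar a_0=\varphi(x,\loi\theta,Z^{x,\loi\theta}_0),
\end{equation*}
where by \autoref{LTB: LTB 3} one can use the continuous versions of $Z^{T,x,\loi\theta}$ and $Z^{x,\loi\theta}$, so that these values are unambiguously defined. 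The Lipschitz estimate for $\varphi$ therefore gives $\abs*{\bar a^T_0-\bar a_0}\leq \frac{\abs*{R}}{\bm\mu}\abs*{Z^{T,x,\loi\theta}_0-Z^{x,\loi\theta}_0}$.

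Finally, I would verify that the assumptions of \autoref{LTB: LTB 3} are fulfilled: thanks to \autoref{lem: f satisfies H0} the driver $f$ inherits (\nameref{EBSD: H0}) with $K^f_z=\abs*{R\mathscr{A}}_\infty$ from \autoref{OCP: hyp}, and (\nameref{OCP: hyp: OCP}) together with the added $\C^{1,0}$ regularity with Lipschitz gradient of $b$ and $\sigma$ and with \autoref{OCP: hyp} for $g$ yield the full set of hypotheses of \autoref{LTB: LTB 3}. Consequently,
\begin{equation*}
\abs*{Z^{T,x,\loi\theta}_0-Z^{x,\loi\theta}_0}\leq C\norm*{\sigma}_\infty (1+\abs*{x}^{q+1}+\norm*{\theta}^{q+1}_{2q+2})e^{-\bm\eta T},
\end{equation*}
which combined with the previous step gives \eqref{eq 3}. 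The only genuinely delicate point is the Lipschitz estimate for $\varphi$, since it is the place where both the strong convexity of $L$ and the potential boundary effects due to the constraint $a\in \mathscr{A}$ must be handled; the variational-inequality argument sketched above takes care of both at once.
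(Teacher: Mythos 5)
Your proof is correct and follows essentially the same route as the paper: strong convexity of $L$ together with convexity of $\mathscr{A}$ yields a unique minimizer $\varphi$ that is Lipschitz in $z$ with constant of order $\bm\mu^{-1}$ (the paper simply cites \cite[Lemma 3.3]{Delarue-Carmona_Book} for this step where you prove it by hand), and the conclusion then follows from the feedback representations at $t=0$ combined with the $Z$-estimate \eqref{LTB: eq: LTB 3 Z} of \autoref{LTB: LTB 3}. The only minor caveat is that your variational-inequality step invokes $\nabla_a L$, whose existence is not explicitly assumed; this is harmless, since the same bound $\abs*{\varphi(x,\loi\theta,z)-\varphi(x,\loi\theta,z')}\leq \frac{\abs*{R}}{\bm\mu}\abs*{z-z'}$ follows without differentiability by applying the strong-convexity inequality $F_z(a)\geq F_z(a_z)+\tfrac{\bm\mu}{2}\abs*{a-a_z}^2$ symmetrically in $z$ and $z'$ and adding.
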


\begin{proof}
Due to the additional $\bm\mu$-strongly convex of $L$ and convexity of $\A$ assumptions, (\nameref{OCP: hyp: OCP}) satisfies the assumptions of \cite[Lemma 3.3]{Delarue-Carmona_Book} which gives that the infimum is attained. Consequently, $\phi$ (which is the same for  $\bar{\bm{a}}$ and $\bar{\bm{a}}^T$) exists and is globally Lipschitz w.r.t $z$ with the constant depending on $\bm\mu$. Hence, by applying \autoref{LTB: LTB 3} we obtain the wanted result.
\end{proof}
\renewcommand{\thesection}{A}
\section{Appendix}\label{Appendix}
Let us consider the following non-homogeneous forward SDE,
\begin{equation}\label{Appendix: SDE}
    \d\bm X_t=\b(t,\bm X_t)\d t+\sigma(t, \bm X_t)\d W_t,\quad\bm X_0\in \L{p}(\Omega,\R^d),\, p\geq 2,
\end{equation}
where we make the following assumptions.
\begin{hyp}\label{Appendix: hyp}
    $\vspace{0pt}$
    \begin{enumerate}
        \item $\b:\R_+\times\R^d\mapsto\R^d$ is measurable w.r.t time, $K^\b_x $-Lipchitz w.r.t $x$ and satisfies that there exists $\bm\eta,R,M_\b>0$, such that for all $x,x'\in\R^d$, $t\geq 0$, 
        \begin{equation}\label{Appendix: hyp: dissipatif}
            \braket{x-x'}{\b(t,x)-\b(t,x')}\leq -\bm\eta\abs*{x-x'}^2\1{\abs*{x-x'}> R} + M_\b\abs*{x-x'}\1{\abs*{x-x'}\leq R}.
        \end{equation}
        \item $\sigma:\R_+\times\R^d\mapsto\R^{d\times d}$ is $\sqrt{2K^\sigma_x}$-Lipschitz and uniformly elliptic uniformly in time, i.e. there exists $\sigma_0>0$, such that for all $t\in\R_+$, $x\in\R^d$, $\sigma\sigma^\top(t, x)\geq\sigma_0^2 I_d $.
        \item $\bm\eta>K^\sigma_x$.
    \end{enumerate}
\end{hyp}

\begin{rmq}
    One could remark that it is possible to rewrite \eqref{Appendix: hyp: dissipatif} using the Lipschitz property of $\b$ as
    \begin{equation}\label{Appendix: dissipatif}
        \braket{x-x'}{\b(t,x)-\b(t,x')}\leq -\bm\eta\abs*{x-x'}^2 +\left(\left(M_\b \wedge K^\b_x\abs*{x-x'} \right)+\bm\eta\abs*{x-x'}\right)\abs*{x-x'}\1{\abs*{x-x'}\leq R}.
    \end{equation}
\end{rmq}
\begin{thm}\label{Appendix: thm general}
    Let us set $\bm X'$ be the solution to \eqref{Appendix: SDE} with drift $\b'$ that satisfies the suitable assumptions ensuring that $\bm X'$ exists. We set,  for all $t\geq0$, $\EE_t:=\norm*{\b(t,\cdot)-\b'(t,\cdot)}_\infty$.
    Then, by considering $\bm{c}>0$ that satisfies 
    \begin{equation}\label{Appendix: constraint}
        \bm{c} < (\bm\eta-K^\sigma_x)\exp{-\frac{\bm\eta+\frac{2M_\b }{R}}{2\sigma_0^2}R^2},
        \end{equation}
        there exists $\bm{\hat\eta}=(\bm\eta,M_\b, R,K^\sigma_x, \bm{c}, \sigma_0)>0$ and $C=C(\bm\eta,R,M_\b, K^\sigma_x, \bm{c}, \sigma_0)>0$ such that 
    \begin{align}
        \w(\loi{\bm X_t},\loi{\bm X'_t})&\leq C\w(\loi{\bm X_0},\loi{\bm X'_0})e^{-\bm{\hat{\eta}}t}+Ce^{-\bm{\hat\eta} t}\int^t_0 \left( \EE_s-\bm{c} \E\seg*{\abs*{\bm X_s-\bm X'_s}}\right)e^{\bm{\hat\eta} s}\d s.\label{Appendix: eq: thm general}
    \end{align}
\end{thm}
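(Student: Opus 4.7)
The proof follows the reflection-coupling methodology of Eberle \cite{Eberle}, adapted to the inhomogeneous drift setting with multiplicative, uniformly elliptic noise as in \cite{Huang-Ma} and \cite{conforti}. Since $\w(\loi{\bm X_t},\loi{\bm X_t'})$ is an infimum over couplings, one is free to realize $(\bm X,\bm X')$ on the same probability space. I would couple them by reflection along the difference direction: take $\bm X$ driven by $W$ and define
\[
 \d\bm X'_t = \b'(t,\bm X'_t)\d t + \sigma(t,\bm X'_t)(I-2e_t e_t^\top)\d W_t,\qquad e_t:=\frac{\bm X_t-\bm X'_t}{|\bm X_t-\bm X'_t|},
\]
(with the usual regularization on the diagonal so that reflection is switched off once $\bm X_t=\bm X'_t$). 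Setting $Z_t:=\bm X_t-\bm X'_t$ and $r_t:=|Z_t|$, an application of It\^o's formula to $r_t$, combined with the splitting $\b(t,\bm X_t)-\b'(t,\bm X'_t)=\bigl[\b(t,\bm X_t)-\b(t,\bm X'_t)\bigr]+\bigl[\b(t,\bm X'_t)-\b'(t,\bm X'_t)\bigr]$, the dissipativity \eqref{Appendix: hyp: dissipatif}, the Lipschitz estimate on $\sigma$ and the lower ellipticity $\sigma\sigma^\top\ge\sigma_0^2 I$, yields the schematic radial inequality
\[
 \d r_t \le \bigl[\gamma(r_t)+\EE_t + K^\sigma_x\, \kappa(r_t)\bigr]\d t + 2\sigma_0\,\d B_t,\qquad \gamma(r):=-\bm\eta r\mathbf{1}_{\{r>R\}}+M_\b\mathbf{1}_{\{r\le R\}},
\]
where $B$ is a one-dimensional Brownian motion, $\kappa(r)$ is a nonnegative Itô correction of order $r$ (this is where the assumption $\bm\eta>K^\sigma_x$ enters), and terms that cancel in expectation have been discarded.

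Next, I would construct an Eberle-type Lyapunov function $f\in\C^2(\R_+;\R_+)$, concave and increasing, of the form $f'(r)=\Phi(r)h(r)$, with $\Phi$ an exponentially decreasing density that exactly cancels the unfavourable drift of $\gamma$ on $[0,R]$ and compensates for the multiplicative-noise correction $K^\sigma_x\kappa$, together with an auxiliary decreasing factor $h$ producing the ambient exponential rate $\bm{\hat\eta}$. The goal is to produce $\bm{\hat\eta}>0$ and constants $0<c_\star\le \|f'\|_\infty<\infty$ such that
\begin{equation}\label{plan:Lyapunov}
 2\sigma_0^2 f''(r)+\bigl(\gamma(r)+K^\sigma_x\kappa(r)\bigr)f'(r)+\bm{\hat\eta}\,f(r)+\bm c\,r\le 0,\qquad \forall\,r\ge 0,
\end{equation}
together with $c_\star\, r\le f(r)\le \|f'\|_\infty\, r$ on $\R_+$. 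A straightforward but careful computation shows that \eqref{plan:Lyapunov} can indeed be arranged precisely when $\bm c$ satisfies the upper bound \eqref{Appendix: constraint}; the exponential factor $\exp\bigl(-(\bm\eta+2M_\b/R)R^2/(2\sigma_0^2)\bigr)$ is exactly the value of $\Phi(R)$ for the optimal choice of $\Phi$, while $\bm\eta-K^\sigma_x$ is the margin left after absorbing the multiplicative-noise correction into the dissipativity.

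With $f$ in hand, I would apply It\^o's formula to $e^{\bm{\hat\eta} t}f(r_t)$, use \eqref{plan:Lyapunov} and $|f'|\le\|f'\|_\infty$, and take expectations to obtain
\[
 \E\bigl[e^{\bm{\hat\eta} t}f(r_t)\bigr]\le \E[f(r_0)]+\int_0^t e^{\bm{\hat\eta} s}\bigl(\|f'\|_\infty\EE_s-\bm c\,\E[r_s]\bigr)\d s.
\]
Using the two-sided bound $c_\star r\le f(r)\le \|f'\|_\infty r$ and choosing $\bm X_0,\bm X_0'$ to form an optimal $\w$-coupling of their marginals then converts this into the announced inequality \eqref{Appendix: eq: thm general}, up to multiplying all error terms by a constant $C$ depending on $\bm\eta,R,M_\b,K^\sigma_x,\bm c,\sigma_0$ through $c_\star$ and $\|f'\|_\infty$.

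The main technical obstacle is the Lyapunov construction and the verification of \eqref{plan:Lyapunov}: one must solve a piecewise linear ODE inequality on $[0,R]$ and $[R,+\infty)$, match $\Phi$ and $h$ at $r=R$, and track the constants so that the extremal admissible value of $\bm c$ saturates \eqref{Appendix: constraint}. A secondary, more routine issue is justifying the reflection coupling rigorously at the coincidence set $\{\bm X_t=\bm X_t'\}$, which is standard (use a small approximation of $e_t$ and let the approximation parameter vanish, noting that the local-time contribution at $0$ vanishes because $f\in\C^2$ with $f'(0^+)<\infty$).
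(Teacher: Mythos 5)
Your overall strategy is the same as the paper's: a reflection coupling, an Eberle-type concave comparison function $\Phi$, and a Gronwall argument on $e^{\bm{\hat\eta}t}\Phi(r_t)$, with the constraint \eqref{Appendix: constraint} entering precisely through the upper bound on $\Phi'(0)$. The organizational differences (building the $\bm c\,r$ term directly into the Lyapunov ODE inequality rather than extracting it at the Gronwall stage, and taking $f'=\Phi h$ instead of a single $\Phi$ with two-sided linear bounds) are cosmetic.

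The one step that does not survive scrutiny as written is the coupling itself. You reflect the full diffusion matrix, $\d\bm X'_t=\b'(t,\bm X'_t)\d t+\sigma(t,\bm X'_t)(I-2e_te_t^\top)\d W_t$, and then assert that the radial process satisfies $\d r_t\le[\cdots]\d t+2\sigma_0\,\d B_t$. For a non-constant, merely uniformly elliptic (and possibly non-symmetric) $\sigma$ this is false: the martingale part of $r_t$ is $(A_t^\top e_t)^\top\d W_t$ with $A_t=[\sigma(t,\bm X_t)-\sigma(t,\bm X'_t)]+2\sigma(t,\bm X'_t)e_te_t^\top$, whose quadratic variation involves $e_t^\top\sigma(t,\bm X'_t)^\top e_t$, a quantity that uniform ellipticity of $\sigma\sigma^\top$ does not bound below. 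Since $\Phi''\le 0$, the It\^o term $\tfrac12\Phi''(r_t)\,\d\langle r\rangle_t$ requires a \emph{lower} bound on the radial quadratic variation, which your coupling does not deliver. The paper circumvents this by decomposing $\sigma\sigma^\top=\sigma_0^2I_d+\bar\sigma\bar\sigma^\top$, reflecting only the isotropic $\sigma_0$-part (through an auxiliary Brownian motion and a cutoff $\pi^1_\delta$ near the coincidence set, which also handles your "regularization at the diagonal" concern), and coupling the $\bar\sigma$-part synchronously; the radial noise is then exactly $2\sigma_0\pi^1_\delta(r_t)e_t^\top\d W^1_t$ plus a $\bar\sigma$-difference martingale whose contribution is absorbed into $\kappa$ via $K^\sigma_x$. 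Once the coupling is replaced by this one, your plan goes through, and the role of \eqref{Appendix: constraint} in making $\bm{\hat\eta}=\frac{2\sigma_0^2}{\Phi'(0)}-\frac{\bm c\,\Phi'(0)(\bm\eta-K^\sigma_x)}{2\sigma_0^2}$ positive matches the paper.
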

\begin{rmq}
    $\mathfrak{b}$ is assumed to be Lipschitz with respect to the second variable in order to ensure the existence and the uniqueness of a strong solution to \eqref{Appendix: SDE} but, importantly, \eqref{Appendix: constraint} and \eqref{Appendix: eq: thm general} do not depend on $K^\b_x$.
\end{rmq}
\begin{proof}
    We want to adapt the proof of \cite[Theorem 3.2]{Huang-Ma} to our framework, namely we assume, in their notations, $r_0=0$ since we are not path dependent in our case. However, we propose a slightly more general framework with two different drifts. 
    For $W^1,W^2,W^3$ three independent Brownian motions and for $R>\delta>0$, $\pi^1_\delta$ and $\pi^2_\delta:\R_+\mapsto\R_+$ two Lipschitz functions such that 
\begin{align*}
   \seg*{0,1}\ni \pi^1_\delta(r)&= \left\{
   \begin{array}{cc}
        1& \text{ if }r\geq \delta  \\
        0 & \text{ if }r\leq \delta/2
    \end{array},\right.\\
    \pi^1_\delta(r)^2+\pi^2_\delta(r)^2&=1, \quad\quad \forall\,r\in\R_+,
\end{align*} we define the coupling $(\X,\X')$ of $(\bm X,\bm X')$ as follows:
\begin{equation}\label{Approx reflection coupling general}\left\{\begin{array}{rl}
    \d \X_t &=\b(t,\X_t)\d t +\sigma_0\pi^1_\delta(r_t)\d W^1_t + \sigma_0\pi^2_\delta(r_t)\d W^3_t+\bar\sigma(t, \X_t)\d W^2_t, \\
    \d \X'_t&=\b'(t,\X'_t)\d t +\sigma_0\pi^1_\delta(r_t)(I_d -2e_te_t^\top)\d W^1_t + \sigma_0\pi^2_\delta(r_t)\d W^3_t+\bar\sigma(t, \X'_t)\d W^2_t
\end{array}\right.
\end{equation} where, we define, for $t\geq 0$, $$\~\X_t:=\X_t-\X'_t, \quad\quad r_t:=\abs*{\~\X_t},\quad\quad e_t:=\frac{\~\X_t}{r_t} \1{r_t \neq 0},\quad\text{ and }\quad \sigma\sigma^\top=\sigma_0^2 I_{d\times d} + \bar\sigma\bar\sigma^\top.$$

    Let us set, for all $r\geq 0$,
    $$\kappa(r):=\sup{t\geq 0,\abs*{x-x'}=r}\set*{\frac{\braket*{x-x'}{\b(t,x)-\b(t,x')}}{\abs*{x-x'}} + \frac{\norm*{\bar\sigma(t, x)-\bar\sigma(t, x')}^2}{2\abs*{x-x'}}},
    $$
    $$
    \kappa^*(r)=(M_\b\wedge K^\b_xr)\1{r\leq R}+\bm\eta r\1{r\leq R}-(\bm\eta-K^\sigma_x)r
    $$
    and \begin{equation}\label{Appendix: Lyapunov}
        \Phi(r):=\int^r_0 \exp{-\frac{1}{2\sigma_0^2}\int^s_0\kappa^*(v)\d v}\left( \int^\infty_s u\,\exp{\frac{1}{2\sigma_0^2}\int^u_0 \kappa^*(v) \d v } \d u\right)\d s.
\end{equation}
By using \eqref{Appendix: dissipatif}, we get $\kappa\leq \kappa^*.$
Moreover, $\Phi$ satisfies
\begin{lem}\label{Appendix: prop phi} For all $r\geq 0$, $\Phi$ satisfies
    $\vspace{0pt}$\begin{enumerate}
        \item $
            2\sigma^2_0\Phi''(r)+ \kappa(r) \Phi'(r)\leq -2\sigma_0^2 r$,
        \item $\Phi'(r)\geq0$ and $\Phi''(r)\leq 0$,
        \item $\frac{2\sigma^2_0}{\bm\eta-K^\sigma_x}r\leq \Phi(r)\leq \Phi'(0)r$
    \end{enumerate}
    and $\Phi'(0)$ satisfies 
    \begin{equation}\label{Appendix: eq: borne Phi'(0)}
        \Phi'(0)\leq \exp{\frac{\bm\eta+\frac{2M_\b}{R}}{4\sigma^2_0}R^2}\frac{2\sigma^2_0}{\bm\eta-K^\sigma_x}.
    \end{equation}
    \end{lem}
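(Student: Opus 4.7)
The plan is to exploit the explicit structure of $\Phi$ by introducing $F(s):=\exp\!\left(-\tfrac{1}{2\sigma_0^2}\int_0^s \kappa^*(v)\d v\right)$ and $G(s):=\int_s^\infty u\,F(u)^{-1}\d u$, so that $\Phi(r)=\int_0^r F(s)G(s)\d s$. Then by the fundamental theorem of calculus $\Phi'(r)=F(r)G(r)$, which is visibly nonnegative; computing $F'=-\tfrac{\kappa^*}{2\sigma_0^2}F$ and $G'(s)=-s/F(s)$ yields the crucial pointwise identity
\begin{equation}\label{ODE phi}
2\sigma_0^2\,\Phi''(r)+\kappa^*(r)\,\Phi'(r)=-2\sigma_0^2\, r,
\end{equation}
from which item \emph{1.} follows immediately: combining \eqref{ODE phi} with $\kappa\le \kappa^*$ and $\Phi'\ge0$ gives $2\sigma_0^2\Phi''(r)+\kappa(r)\Phi'(r)\le -2\sigma_0^2 r$.

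To establish item \emph{2.} and the lower bound in item \emph{3.}, I would first observe that on the region $r\geq R$ one has $\kappa^*(v)=-(\bm\eta-K^\sigma_x)v$, so changing variables $w=u^2$ in
\[
\Phi'(r)=F(r)G(r)=\int_r^\infty u\,\exp\!\left(\tfrac{1}{2\sigma_0^2}\int_r^u \kappa^*(v)\d v\right)\d u
\]
gives, for every $r\geq R$, the \emph{exact} value $\Phi'(r)=\tfrac{2\sigma_0^2}{\bm\eta-K^\sigma_x}$. In particular $\Phi''(r)=0$ on $(R,\infty)$. On $[0,R]$ one has $\kappa^*(r)\geq 0$, so \eqref{ODE phi} yields $\Phi''(r)=-\tfrac{\kappa^*(r)}{2\sigma_0^2}\Phi'(r)-r\leq 0$. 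Hence $\Phi'$ is nonincreasing and bounded below by its limit at $\infty$, which gives $\Phi'(s)\geq \tfrac{2\sigma_0^2}{\bm\eta-K^\sigma_x}$ for all $s\ge 0$. Integrating then gives
\[
\tfrac{2\sigma_0^2}{\bm\eta-K^\sigma_x}\,r\;\leq\;\Phi(r)=\int_0^r \Phi'(s)\d s\;\leq\;\Phi'(0)\,r.
\]

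For the quantitative bound on $\Phi'(0)=G(0)$, I would uniformize the bound on $\kappa^*$ by noticing that for all $v\geq 0$,
\[
\kappa^*(v)\;\leq\;-(\bm\eta-K^\sigma_x)v+\bigl(M_\b+\bm\eta v\bigr)\1{v\leq R}.
\]
Integrating and then bounding the $[0,R]$-contribution by $M_\b R+\tfrac{\bm\eta R^2}{2}=\bigl(\bm\eta+\tfrac{2M_\b}{R}\bigr)\tfrac{R^2}{2}$ yields
\[
\int_0^u \kappa^*(v)\d v\;\leq\;-(\bm\eta-K^\sigma_x)\tfrac{u^2}{2}+\Bigl(\bm\eta+\tfrac{2M_\b}{R}\Bigr)\tfrac{R^2}{2}.
\]
Plugging into $G(0)=\int_0^\infty u\exp\!\left(\tfrac{1}{2\sigma_0^2}\int_0^u\kappa^*(v)\d v\right)\d u$ and evaluating the remaining Gaussian-type integral by the substitution $w=u^2$ gives precisely \eqref{Appendix: eq: borne Phi'(0)}.

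The only delicate step is recognising that the precise choice of $\Phi$ forces $\Phi'(r)$ to be \emph{exactly} constant for $r\geq R$, since this is what simultaneously ensures concavity on the whole half-line and identifies the limit that controls the lower bound in \emph{3.}\,; everything else reduces to the ODE identity \eqref{ODE phi} and elementary inequalities on $\kappa^*$.
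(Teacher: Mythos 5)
Your proof is correct and is essentially a fully worked-out version of the computation the paper simply delegates to Huang--Ma (Theorem 3.2, eqs.\ (3.11)--(3.13)): the ODE identity $2\sigma_0^2\Phi''+\kappa^*\Phi'=-2\sigma_0^2 r$, the observation that $\Phi'$ is exactly constant equal to $\tfrac{2\sigma_0^2}{\bm\eta-K^\sigma_x}$ on $[R,\infty)$, and the Gaussian-type bound on $\Phi'(0)$ are precisely the standard Eberle-type estimates intended there. The only cosmetic caveat is that $\kappa^*$ jumps at $r=R$, so $\Phi''$ should be read as the a.e.\ derivative of the continuous, nonincreasing function $\Phi'$; this affects none of the conclusions.
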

    \begin{proof}The proof is a straight forward adaptation of computations in {\cite[Theorem 3.2's proof]{Huang-Ma}}, namely equations \cite[(3.11),(3.12) and (3.13)]{Huang-Ma} where $\Phi  $ and $\kappa^*$ respectively correspond to $f$ and $\~\gamma$ in the cited reference, and the fact that $\kappa^*\geq \kappa$.
    \end{proof}
    First, by applying Itô-Tanaka's formula to $r_t:=\sqrt{r_t^2}$, we obtain, 
    \begin{align*}
        \d r_t = \frac{\braket*{\X_t-\X'_t}{\b(t,\X_t)-\b'(t,\X'_t)}}{r_t}\d t+ \frac{1}{2r_t}\norm*{\sigma(t, \X_t)-\sigma(t, \X'_t)}\d t +\frac{\braket*{\~\X_t }{\left(\bar\sigma(t, \X_t)-\bar\sigma(t, \X'_t)\right)\d W^2_t}}{r_t} + 2\sigma_0 \pi^1_\delta(r_t)e^\top_t\d W^1_t.
    \end{align*}
    Consequently, by applying again Itô's formula to $\Phi(r_t)$, one could get 
    \begin{align*}
        \d \Phi(r_t)&=\seg*{\Phi'(r_t)\left(\frac{\braket*{\X_t-\X'_t}{\b(t,\X_t)-\b'(t,\X'_t)}}{r_t}+ \frac{1}{2r_t}\norm*{\sigma(t, \X_t)-\sigma(t, \X'_t)}^2 \right)+2\sigma_0^2\Phi''(r_t)\pi^1_\delta(r_t)^2}\d t\\ 
        &\quad + \Phi'(r_t)\left( \frac{\braket*{\~\X_t }{\left(\bar\sigma(t, \X_t)-\bar\sigma(t, \X'_t)\right)\d W^2_t}}{r_t} + 2\sigma_0 \pi^1_\delta(r_t)e^\top_t\d W^1_t\right)\\
        &=\seg*{\Phi'(r_t)\left(\frac{\braket*{\X_t-\X'_t}{\b(t,\X_t)-\b(t,\X'_t)}}{r_t}+ \frac{1}{2r_t}\norm*{\sigma(t, \X_t)-\sigma(t, \X'_t)}^2 \right)+2\sigma_0^2\Phi''(r_t)\pi^1_\delta(r_t)^2}\d t\\
        &\quad+\Phi'(r_t)\frac{\braket*{\X_t-\X'_t}{\b(t,\X_t)-\b'(t,\X_t)}}{r_t}\d t + \Phi'(r_t)\left( \frac{\braket*{\~\X_t }{\left(\bar\sigma(t, \X_t)-\bar\sigma(t, \X'_t)\right)\d W^2_t}}{r_t} + 2\sigma_0 \pi^1_\delta(r_t)e^\top_t\d W^1_t\right).
    \end{align*} 
    We set,
    $$
    \d M_t:=\Phi'(r_t)\left( \frac{\braket*{\~\X_t }{\left(\bar\sigma(t, \X_t)-\bar\sigma(t, \X'_t)\right)\d W^2_t}}{r_t} + 2\sigma_0 \pi^1_\delta(r_t)e^\top_t\d W^1_t\right).
    $$
    Hence, by using \autoref{Appendix: prop phi}, \eqref{Appendix: dissipatif}, the facts that $\kappa\leq \kappa^*$ and $\kappa^*$ is increasing on $\seg*{0,R}$, we obtain
    \begin{align*}
        \d \Phi(r_t)&\leq \seg*{\Phi'(r_t) \kappa(r_t) +2\sigma_0^2\Phi''(r_t)\pi^1_\delta(r_t)^2}\d t  + \Phi'(0)\EE_t \d t +\d M_t\\
        &\leq \seg*{\Phi'(r_t) \kappa(r_t) +2\sigma_0^2\Phi''(r_t)}\pi^1_\delta(r_t)^2\d t  +\Phi'(r_t)\kappa(r_t)(1-\pi^1_\delta(r_t)^2)\d t+ \Phi'(0)\EE_t\d t +\d M_t\\
        &\leq-2\sigma_0^2 r_t \pi^1_\delta(r_t)^2\d t +\Phi'(0)\kappa^*(\delta)\d t+\Phi'(0)\EE_t \d t +\d M_t\\
        &=-2\sigma_0^2 r_t\d t + 2\sigma_0^2 r_t(1-\pi^1_\delta(r_t)^2)\d t+\Phi'(0)\kappa^*(\delta)\d t +\Phi'(0)\EE_t \d t+\d M_t\\
        &\leq -2\sigma_0^2 r_t\d t + \left(2\sigma_0^2 \delta+\Phi'(0)\kappa^*(\delta)\right)\d t +\Phi'(0)\EE_t \d t +\d M_t\\
        &\leq -\frac{2\sigma_0^2}{\Phi'(0)}\Phi(r_t)\d t + \ell(\delta)\d t+\Phi'(0)\EE_t +\d M_t,
    \end{align*}
    where we set $\ell(\delta):=2\sigma_0^2 \delta+\Phi'(0)\kappa^*(\delta)\overset{\delta\rightarrow 0}{\longrightarrow}0$. Due to the non-increasing property of $\Phi$ and the Lipschitz property of $\bar\sigma$, we have that the stochastic integral $\d M_t$ is a martingale. 
    Thus, by \eqref{Appendix: constraint} and \eqref{Appendix: eq: borne Phi'(0)}, we can  set $\bm{\hat\eta} := \frac{2\sigma_0^2}{\Phi'(0)}- \frac{\bm{c} \Phi'(0)(\bm\eta-K^\sigma_x)}{2\sigma^2_0}>0$. Thus, applying Itô's formula to $e^{\hat{\eta}t}\Phi(r_t)$ and taking the expectation lead to,
\begin{align*}
    e^{\bm{\hat\eta} t}\E\seg*{\Phi(r_t)}&\leq  \E\seg*{\Phi(r_0)} + \frac{\ell(\delta)}{\bm{\hat\eta}}e^{\bm{\hat\eta} t} + \Phi'(0)\int^t_0\EE_s e^{\bm{\hat\eta} s}\d s-\frac{\bm{c} \Phi'(0)(\bm\eta-K^\sigma_x)}{2\sigma^2_0}\int^t_0\E\seg*{\Phi(r_s)}e^{\bm{\hat\eta} s}\d s\\
   & \quad\overset{\delta\rightarrow 0}{\longrightarrow}\E\seg*{\Phi(r_0)}+ \Phi'(0)\int^t_0\EE_s e^{\bm{\hat\eta} s}\d s-\frac{\bm{c} \Phi'(0)(\bm\eta-K^\sigma_x)}{2\sigma^2_0}\int^t_0\E\seg*{\Phi(r_s)}e^{\bm{\hat\eta} s}\d s.
\end{align*}
Hence, by using \autoref{Appendix: prop phi}-3. and taking $(\X_0,\X'_0)$ an optimal coupling for $\w$-distance, last inequality becomes
\begin{align}
    \w(\loi{\X_t},\loi{\X'_t})\leq \E\seg*{r_t}  &\leq \frac{\bm\eta-K^\sigma_x}{2\sigma_0^2}e^{-\bm{\hat\eta} t} \left(\E\seg*{\Phi(r_0)}+ \Phi'(0)\int^t_0\left(\EE_s -\bm{c}\frac{\bm\eta-K^\sigma_x}{2\sigma_0^2}\E\seg*{\Phi(r_s)}\right)e^{\bm{\hat\eta} s}\d s\right)\notag\\
    &\leq \frac{\bm\eta-K^\sigma_x}{2\sigma_0^2}\Phi'(0)\E\seg*{\abs*{\X_0-\X_0'}}e^{-\bm{\hat\eta} t} + e^{-\bm{\hat\eta} t}\Phi'(0)\int^t_0\left(\EE_s -\bm{c}\E\seg*{r_s}\right)e^{\bm{\hat\eta} s}\d s\notag\\
    &=C\w(\loi{\X_0},\loi{X'_0})e^{-\bm{\hat\eta} t} +\Phi'(0)e^{-\bm{\hat\eta} t}\int^t_0 \left( \EE_s-\bm{c} \E\seg*{r_s}\right)e^{\bm{\hat\eta} s}\d s.\label{Appendix: eq: dernière eq}
\end{align}
\end{proof}
\bibliographystyle{abbrv}
\bibliography{biblio}

\end{document}